\documentclass[14pt, dvipsnames]{extarticle}
\usepackage[left=2cm, top=2cm, right=2cm, bottom=20mm]{geometry} 

\usepackage[T1]{fontenc}
\usepackage[cmintegrals,cmbraces]{newtxmath}
\usepackage[lining]{ebgaramond}
\usepackage{ebgaramond-maths}
\usepackage{euscript}
\usepackage{xcolor}
\usepackage{graphicx}
\usepackage[figurename=Fig.]{caption}

\usepackage[tracking = true, letterspace = 50]{microtype}
\SetTracking{encoding=T1, shape=sc}{50}
\SetTracking{encoding=T1}{50}

\usepackage{amsthm} 
\usepackage{hyperref} 
\usepackage[hyphenbreaks]{breakurl}

\theoremstyle{definition}
\newtheorem{theorem}{Theorem}
\newtheorem{coroll}{Corollary}[theorem]
\newtheorem{lemma}{Lemma}
\newtheorem{prop}{Proposition}

\newtheorem*{remark*}{Remark}
\newtheorem*{theoremprime}{Theorem}

\theoremstyle{definition}
\newtheorem{defi}{Definition}

\newtheorem{example}{\bf Example}

\theoremstyle{remark}

\usepackage{enumerate}

\usepackage{etoolbox}
\patchcmd{\thmhead}{(#3)}{#3}{}{}

\let\phi\varphi

\DeclareMathOperator{\kk}{\Bbbk}

\DeclareMathOperator{\res}{\mathrm{res}}

\DeclareMathOperator{\Q}{\mathbb{Q}}

\DeclareMathOperator{\G}{\mathbb{G}}
\DeclareMathOperator{\id}{\mathrm{id}}
\DeclareMathOperator{\Sym}{\mathrm{Sym}}
\DeclareMathOperator{\nGrp}{\mathit{n}-\mathcal{G}\mathrm{rp}}
\DeclareMathOperator{\inv}{\mathrm{inv}}
\DeclareMathOperator{\Aut}{\mathrm{Aut}}

\DeclareMathOperator{\Z}{\mathbb{Z}}
\DeclareMathOperator{\Cc}{\mathbb{C}}

\DeclareMathOperator{\disc}{\mathrm{disc}}
\DeclareMathOperator{\R}{\mathbb{R}}

\DeclareMathOperator{\rep}{\mathrm{rep}}

\DeclareMathOperator{\vecc}{\mathrm{vec}}
\DeclareMathOperator{\tr}{\mathrm{tr}}
\DeclareMathOperator{\charr}{\mathrm{char}}
\DeclareMathOperator{\Toep}{\mathrm{Toep}}
\DeclareMathOperator{\sgn}{\mathrm{sgn}}
\DeclareMathOperator{\Circ}{\mathrm{Circ}}
\DeclareMathOperator{\GL}{\mathrm{GL}}
\DeclareMathOperator{\h}{\mathrm{h}}
\DeclareMathOperator{\diag}{\mathrm{diag}}

\DeclareMathOperator{\Mat}{\mathrm{Mat}}
\DeclareMathOperator{\E}{\mathcal{E}}

\makeatletter
  \DeclareSymbolFont{ntxletters}{OML}{ntxmi}{m}{it}
  \SetSymbolFont{ntxletters}{bold}{OML}{ntxmi}{b}{it}
  \re@DeclareMathSymbol{\leftharpoonup}{\mathrel}{ntxletters}{"28}
  \re@DeclareMathSymbol{\leftharpoondown}{\mathrel}{ntxletters}{"29}
  \re@DeclareMathSymbol{\rightharpoonup}{\mathrel}{ntxletters}{"2A}
  \re@DeclareMathSymbol{\rightharpoondown}{\mathrel}{ntxletters}{"2B}
  \re@DeclareMathSymbol{\triangleleft}{\mathbin}{ntxletters}{"2F}
  \re@DeclareMathSymbol{\triangleright}{\mathbin}{ntxletters}{"2E}
  \re@DeclareMathSymbol{\partial}{\mathord}{ntxletters}{"40}
  \re@DeclareMathSymbol{\flat}{\mathord}{ntxletters}{"5B}
  \re@DeclareMathSymbol{\natural}{\mathord}{ntxletters}{"5C}
  \re@DeclareMathSymbol{\star}{\mathbin}{ntxletters}{"3F}
  \re@DeclareMathSymbol{\smile}{\mathrel}{ntxletters}{"5E}
  \re@DeclareMathSymbol{\frown}{\mathrel}{ntxletters}{"5F}
  \re@DeclareMathSymbol{\sharp}{\mathord}{ntxletters}{"5D}
  \re@DeclareMathAccent{\vec}{\mathord}{ntxletters}{"7E}
\makeatother

\renewcommand{\epsilon}{\varepsilon}

\DeclareFontFamily{U}{EBSUB}{}
\DeclareFontShape{U}{EBSUB}{m}{it}{<-> EBGaramond-Italic-tlf-lgr}{}
\DeclareFontFamily{U}{EBSUB}{}
\DeclareSymbolFont{EBSUB}{U}{EBSUB}{m}{it}

\DeclareMathSymbol{\mu}{\mathord}{EBSUB}{`m}
\DeclareMathSymbol{\varDelta}{\mathord}{EBSUB}{`D}
\DeclareMathSymbol{\varOmega}{\mathord}{EBSUB}{`W}

 \hypersetup{
    colorlinks,
    linkcolor={red!50!black},
    citecolor={blue!50!black},
    urlcolor={blue!80!black}
}

\usepackage{stackrel}
\usepackage[all]{xy}

\usepackage{bm}

\usepackage{indentfirst}

\usepackage{tocloft}

\title{\bf $\bm{n}$-Valued Groups, Kronecker Sums, \\ and Wendt's Matrices}

\date{}

\author{Victor Buchstaber and Mikhail Kornev}

\begin{document}

\maketitle

\begin{abstract}
The article presents results on the well-known problem concerning the structure of integer polynomials $p_n(z; x, y)$, which define multiplication laws in $n$-valued groups $\mathbb{G}_n$ over the field of complex numbers $\mathbb{C}$. We show that the $n$-valued multiplication in the group $\mathbb{G}_n$ is realized in terms of the eigenvalues of the Kronecker sum of companion Frobenius matrices for polynomials of the form $t^n - x$ in the variable $t$. The notion of a Wendt $(x, y, z)$-matrix is introduced. When $x = (-1)^n$, $y = z = 1$, one recovers the classical Wendt matrix, whose determinant is used in number theory in connection with Fermat’s Last Theorem. It is shown that for each positive integer $n$, the polynomial $p_n$ is given by the determinant of a Wendt $(x, y, z)$-matrix. Iterations of the $n$-valued multiplication in the group $\mathbb{G}_n$ lead to polynomials $p_n(z; x_1, \dots, x_m)$. We prove the irreducibility of the polynomial $p_n(z; x_1, \dots, x_m)$ over various fields. For each $n$, we introduce the notion of classes of symmetric $n$-algebraic $n$-valued groups. The group $\mathbb{G}_n$ belongs to one of these classes. For $n = 2, 3$, a description of the universal objects of these classes is obtained.
\end{abstract}

\tableofcontents

\section{Introduction}\label{sec:intro}\label{intro}

In 1971, V.\,M.~Buchstaber and S.\,P.~Novikov proposed a construction motivated by the theory of characteristic classes \cite{Buchstaber_Novikov}. This construction describes a multiplication in which the product of any pair of elements is a multiset of $n$ points. An axiomatic definition of $n$-valued groups and the first results of their algebraic theory were obtained in a subsequent series of works by V.\,M.~Buchstaber. Currently, the theory of $n$-valued (formal, finite, discrete, topological, and algebro-geometric) groups and their applications in various areas of mathematics and mathematical physics are being developed by a number of authors; see \cite{Buchstaber75, Kholodov81, Buchstaber90, Buchstaber_Veselov, Vershik, BuchRees, Buchstaber, BuchVesGaif, BuchVesnin, Kontsevich_type_polynomials} and the references therein. A comparative analysis of the theories of 1-valued and $n$-valued groups can be found in \cite{Borovik}.

Let us recall the construction of the $n$-valued group over the field of complex numbers $\Cc$ from \cite{Buchstaber_Novikov} (see Section \ref{Basic_concepts_of_n_val} for the basic concepts of the theory of $n$-valed groups). This $n$-valued group will be denoted by $\G_n$ throughout this work. To each pair of complex numbers $x$ and $y$ corresponds the multiset
\begin{equation}\label{operation}
x\ast y = [(\sqrt[n]{x} + \epsilon^r\sqrt[n]{y})^n\mid r = 1, \dots, n]
\end{equation}
of $n$ complex numbers, where $\epsilon$ is a (some) primitive $n$th root of unity, and $\sqrt[n]{\cdot}$ denotes a fixed branch of the root. It is easy to see that the definition of the operation $\ast$ does not depend on the choice of the branch of the root $\sqrt[n]{\cdot}$. Here and below, we will use the principal branch for which $\sqrt[n]{1} = 1$, unless otherwise stated.

It is easy to verify that the operation $\ast$ with unit $0$ and inverse $\inv(x) = (-1)^nx$ defines an $n$-valued associative multiplication (addition) on the set of complex numbers $\Cc$ (see Definition \ref{nval_group_def}). Therefore, one can speak of an $n$-valued group over $\Cc$.

The construction of the group $\G_n$ serves as an example of a general construction of an $n$-valued coset group (see Proposition \ref{coset_theorem}), which is defined by a pair $(G, H)$, where $H$ is a subgroup of order $n$ of the automorphism group of $G$. Coset groups by no means exhaust all $n$-valued groups. In \cite{Buchstaber}, for each positive integer $k$, a $(2k + 1)$-valued group is constructed on a three-element set. In the paper \cite{Ponomarenko24}, it is proved that this group is coset if and only if $4k + 3 = p^s$ for some prime $p$ and positive integer $s$. The same paper discusses the connection between $n$-valued coset groups on a three-element set and the classification of all finite groups of rank 3 obtained in \cite{Liebeck}. In \cite{Vershik}, a connection is indicated between the problem of constructing non-coset $n$-valued groups and the classical problem of Schur \cite{Wielandt}.

An $n$-valued group is called cyclic if it is generated by some element. Unlike ordinary groups, the problem of classifying cyclic $n$-valued groups has turned out to be difficult and remains open. In \cite{Buchstaber}, a broad class of $n$-valued cyclic groups was constructed based on Burnside’s theorem \cite[Chapter XV, Theorem IV]{Burnside55}: for any finite group $G$ possessing an irreducible faithful unitary representation, each of its irreducible representations $\psi$ occurs in the decomposition of the representation $\rho^{\otimes k}$ into a sum of irreducibles for some $k = k(\psi)$. The classification of all finite groups possessing faithful irreducible complex representations was obtained in \cite{Gaschtz54}. It is also noted that in \cite{BuchVesnin}, a wide class of cyclic $n$-valued groups was constructed based on combinatorial group theory. Cyclic $n$-valued groups are important in the theory of multivalued integrable discrete-time dynamics \cite{Veselov91, Buchstaber_Veselov}. In \cite{Buchstaber_Veselov}, the concept of a growth function was proposed as a characteristic of the integrability of such dynamics, and the Euler–Chasles correspondence was described in terms of a representation of a two-valued coset group. In recent paper \cite{Kornev}, the growth function for a class of cyclic coset two-valued groups
$$\mathbb{Z}/m\ast\mathbb{Z}/m = \langle a, b\ | \ a^m = b^m = 1 \rangle$$
and $n$-valued groups
$$\left (\mathbb{Z}/2\right )^{\ast n}= \langle a_1, \dots, a_n \ | \ a_1^2=\dots=a_n^2=1 \rangle$$
for automorphisms cyclically permuting the generators was computed.

In \cite{BuchRees}, the following family of polynomials was introduced:
$$p_n(z; x,y) = \prod\limits_{w\in \inv(x)\ast \inv(y)} (z - w),$$
where $w$ ranges over the $n$-multiset $\inv(x)\ast \inv(y)$. It was noted in the same paper that the polynomials $p_n(z; x, y)$ are integer, homogeneous, symmetric of degree $n$. They are explicitly given in \cite{Buchstaber} for $n= 1, \dots,7$ as integer polynomials in the elementary symmetric polynomials $\sigma_1, \sigma_2$, and $\sigma_3$, and questions were posed regarding the structure of the coefficients of these polynomials depending on $n$.

This motivates the notion of symmetric $n$-algebraic $n$-valued groups (see Definitions \ref{n-algebraic_n-valued} and \ref{symmetric_n-algebraic_n-valued}). For each $n$, a notion of classes of symmetric $n$-algebraic $n$-valued groups is introduced. For $n = 2, 3$, a description of the universal objects of these classes is obtained (Theorems \ref{universal_2-valued_group} and \ref{universal_3-valued_group}). The group $\G_n$ belongs to one of these classes. These results constitute the content of Section \ref{symmetric_n-algebraic_n-valued_groups}.

The groups $\G_n$ and the polynomials $p_n(z; x, y)$ arise and are currently being studied in various areas of mathematics and mathematical physics. In recent paper \cite{Chirkov}, the growth function of the $n$-valued dynamics of the group $\G_n$ was investigated. In another recent work \cite{Gaiur}, a connection of the polynomials $p_n(z; x, y)$ with multiplication kernels in the sense of Kontsevich, $n$-Bessel kernels, and potentials in the Landau–Ginzburg model was described. In \cite{Gaiur}, a link was established between the symmetric 2-algebraic 2-valued group from \cite{Buchstaber90} and Kontsevich polynomials introduced in the framework of the Langlands program.

This work shows that $\G_n$ is realized via the roots of the characteristic polynomial of the variable $z = t^n$ of the Kronecker sum (see Definition \ref{Kronecker_sum}) of two Frobenius companion matrices (\ref{Frobenius_matrix}) for the polynomials $t^n-x$ and $t^n-y$. Iterations of the $n$-valued multiplication lead to homogeneous symmetric polynomials $p_n(z; x_1, \dots, x_m)$, see Theorem \ref{thm1}. This is the content of Section \ref{G_n_and_Kronecker}. In this regard, we recall the necessary constructions from linear algebra in Section \ref{Kronecker_sums_and_companions}. In the same section, we provide a direct and simple proof of Proposition \ref{C_g_circ_C_f}, which first appeared in \cite[Theorem 1]{Carrillo}. This result concerns the realization of polynomial substitution into a polynomial via a certain block companion Frobenius matrix. Our proof relies on a straightforward computation of the determinant of a block matrix from \cite[Lemma 2.1]{Melman}.

In Theorem \ref{xyz_Wendt_det}, it is shown that each polynomial $p_n(z; x, y)$ is represented as the determinant of an $n \times n$ matrix (\ref{xyz_Wendt_det}), which belongs to a class known in the literature as $y$-circulant matrices \cite[p. 211]{Babenko}. Moreover, for each $n$, when $x = (-1)^n$, $y = z = 1$, this determinant $p(z; x, y)$ becomes the determinant $\det W_n$ of Wendt's matrix $W_n$, which has important applications in number theory in the context of Fermat’s Last Theorem (see, for example, \cite[Chapter IV, {\bf (2b)}]{Fermat_for_amateurs}). This allows us to introduce a generalization — the Wendt $(x, y, z)$-matrices. Then Theorem \ref{xyz_Wendt_det} can be equivalently reformulated as the statement that the polynomials $p_n(z; x, y)$, which express $n$-valued multiplication in the group $\G_n$, are determinants of a Wendt $(x, y, z)$-matrix (Theorem \hyperref[xyz_Wendt_and_p_n]{4’}). This result is extended in Theorem \ref{p_n_via_A^n} to the polynomials $p_n(z; x_1\dots x_m)$. This leads to a range of questions related to the number-theoretic properties of the coefficients of the polynomials $p_n(z; x, y)$, generalizing the corresponding one for $W_n$. In Proposition \ref{div_n^4}, using Wolstenholme’s theorem, it is shown that the coefficients of the polynomial $p_n(z; x, y) - (x + y + z)^n$ are divisible by $n^4$ for every prime $n \geqslant 5$. One of the motivations for this work was the experimental observation that arbitrarily large prime numbers appear in the prime factorizations of the coefficients of the polynomials $p_n(z; x, y)$ when expressed in the basis of elementary symmetric polynomials. These results comprise the content of Section \ref{G_n_and_Wendt}.

The polynomials $p_n(z; x_1\dots x_m)$ are naturally connected with several classical and modern problems in number theory, including Waring’s problem. Theorem \ref{trans_extension} states that the minimal polynomial of the element $\theta = {x_1}^{1/n_1} + \ldots + {x_m}^{1/n_m}$ of the extension $\kk(x_1\dots x_m) \subset \kk(\sqrt[n]{x_1}\dots \sqrt[n]{x_m})$ with transcendental $x_1\dots x_m$ over various fields $\kk$ is the characteristic polynomial of the Frobenius sum of the companion matrices of the polynomials $t^{n_j} - x_j$ for $j = 1\dots m$. As a consequence, the polynomials $p_n(z; x_1\dots x_m)$ are irreducible over $\Cc$ and over some other fields. From Theorem \ref{irreducible}, it follows that for suitable integer values of $x_1\dots x_m$, the polynomials $p_n(z; x_1\dots x_m)$ in the single variable $z$ are irreducible over $\Q$. These results follow from the remarkable works \cite{Albu, Ursell}. This is the content of Section \ref{nonderogatory}.

In \cite[Proposition 1]{Helou}, it was noted that the determinant of Wendt's matrix can be represented as the discriminant of a certain univariate polynomial. We generalize this fact and show that the determinant of a Wendt $(x, y, z)$-matrix can be represented as the discriminant of a certain univariate polynomial with parameters $x, y, z$; see Theorem \ref{discriminant_two_dim}. Remarkably, this parametric polynomial already appeared in \cite[Theorem 2.3]{Gaiur} in a problem involving the polynomials $p_n(z; x, y)$, but in a different context unrelated to the determinant of Wendt's matrix. In Theorem \ref{multi_disc}, this result is partially extended to the polynomials $p_n(z; x_1\dots x_m)$. This is the content of Section \ref{p_n_and_disc_varieties}.

In survey \cite{Buchstaber}, for each finite $G$, a construction is introduced of an $n$-valued commutative group on the set $\{\rho/\dim(\rho)\}$ of normalized irreducible unitary representations of the group $G$. We denote this group by $\G^{\rep}_{\otimes}$. Cyclic $n$-valued groups constructed based on Burnside’s theorem (see above) have the form $\G^{\rep}_{\otimes}$. The group $\G^{\rep}_{\otimes}$ is an example of an $n$-valued group that belongs to the class of $n$-valued groups which, according to \cite{Vershik}, can be identified with the class of combinatorial algebras playing an important role in algebraic combinatorics. The construction of the group $\G^{\rep}_{\otimes}$ is related to the well-known problem of the coefficients in the decomposition of tensor products of representations into irreducibles \cite{Littlewood06}. If one considers the Kronecker sum instead of the tensor product, then Theorem \ref{Kronecker_sum_group} states that on the set of all normalized finite-dimensional representations $\{\psi/\dim\psi\}$ of a finite group $G$, there arises a structure of a 1-valued commutative group $\G^{\rep}_{\boxplus}$. This is the content of Section \ref{characters}.

For $n$-valued groups, there is a natural definition of the Cartesian product (see Definition \ref{cartesian}). Considering the left action of the group $\G^m_n$ on the group $\G_n$, we arrive at a dynamical interpretation of the polynomials $p_n(z; x_1\dots x_m)$ in Theorem \ref{action}. This is the content of Section \ref{G^m_n}.

\section{Basic Concepts of the Theory of $n$-Valued Groups}\label{Basic_concepts_of_n_val}

In this section, we give some definitions and examples, following \cite{Buchstaber}.

\begin{defi}\label{nval_group_def}
An {\it $n$-valued multiplication} on a set $X$ is a map
$$\mu: X\times X \to \Sym^n(X)\,:\,\mu(x,y)=x*y,$$
where $\Sym^n(X):=X^n/\Sigma_n$ is the $n$-th symmetric power of the set $X$ (i.e., the set consisting of multisets $[z_1\dots z_n]$), such that the following conditions are satisfied:

\begin{itemize}

\item {\it Associativity.} The $n^2$-multisets
\begin{gather*}
[x\ast w\mid w\in y \ast z],\\
[w\ast z\mid w\in x\ast y]
\end{gather*}
coincide.

\item {\it Identity.} There exists an element $e\in X$ such that $$e\ast x = x\ast e = [x, x, \dots, x]$$ for all $x\in X$.

\item {\it Inverse element.} There is a map $\mathrm{inv}\colon X\to X$ such that
$$e\in \mathrm{inv}(x)* x,\quad e \in x*\mathrm{inv}(x)$$
for every $x\in X$.

\end{itemize}
\end{defi}

\begin{defi}
If a set $X$ has an $n$-valued multiplication, then $X$ is called an {\it $n$-valued group}.
\end{defi}

\begin{example}
Every 1-valued group $G$ is an ordinary group.
\end{example}

\begin{example}\label{G_n_defi}
The set $\Cc$ of complex numbers carries a structure of an $n$-valued group $\G_n$. Consider the operation $\ast: \Cc\times\Cc\to \Sym^n(\Cc)$ that assigns to each pair of complex numbers $x$ and $y$ the multiset
\begin{equation}\label{operation2}
x\ast y = [(\sqrt[n]{x} + \epsilon^r\sqrt[n]{y})^n\mid r = 1\dots n]
\end{equation}
of $n$ complex numbers, where $\epsilon$ is a primitive $n$th root of unity, and $\sqrt[n]{\cdot}$ denotes a fixed root branch. The operation $\ast$, the identity element $0$, and the inverse $\inv(x) = (-1)^n x$ endow the set of complex numbers $\Cc$ with the structure of an $n$-valued group.
\end{example}

In Section \ref{symmetric_n-algebraic_n-valued_groups}, we consider examples of $n$-algebraic $n$-valued groups. We refer the reader to the list of cited references for many other examples.

\begin{defi}
An $n$-valued group on a set $X$ is called {\it commutative} if $x_1\ast x_2 = x_2\ast x_1$ for all $x_1, x_2\in X$.
\end{defi}

\begin{defi}
A map $f: X\to Y$ between $n$-valued groups is called a {\it homomorphism} if

\begin{itemize}

\item $f(e_X)=e_Y$.

\item $f(\mathrm{inv}_X(x))=\mathrm{inv}_Y(f(x))$ for any $x\in X$.

\item $\mu_{Y}(f(x),f(y))=(f)^n\mu_X(x,y)$ for all $x, y\in X$.

\end{itemize}

That is, the following diagram is commutative:

\[
\xymatrix{
X\times X \ar[r]^{\mu_X} \ar[d]_{f\times f} & \Sym^n(X) \ar[d]^{\Sym(f)} \\
Y\times Y \ar[r]       & \Sym^n(Y) }
\]

A bijective homomorphism of $n$-valued groups is called an {\it $n$-isomorphism}.

\end{defi}

Thus, the class of $n$-valued groups forms a category $\nGrp$.

There is a method for constructing $n$-valued groups \cite{Buchstaber}, in which for a given (1-valued) group $G$ with multiplication $\mu_0$, identity $e_G$, and inverse $\inv_G(u) = u^{-1}$, and for a given finite subgroup $H$ of the automorphism group $\Aut(G)$, one considers the set of orbits $X := G/H$ of $G$ under the action of $H$ with projection $\pi: G\to X$ and an $n$-valued multiplication
$$\mu: X\times X\to \Sym^n(X),$$
given by the formula
\begin{equation}\label{coset}
\mu(x, y) = [ \pi(\mu_0(u, h(v))) \ | \ h\in H ]
\end{equation}
where $u\in\pi^{-1}(x)$ and $v\in\pi^{-1}(y)$.

Note that non-isomorphic pairs $(G_1, H_1)$ and $(G_2, H_2)$ may produce isomorphic $n$-valued groups \cite[Proposition 3.2]{BuchRees}.

\begin{prop}[(Coset construction, \cite{Buchstaber})]\label{coset_theorem}
The multiplication $\mu$ given by {\normalfont (\ref{coset})} defines an $n$-valued multiplication on the orbit space $X = G/H$ with identity $e_x = \pi(e_G)$ and inverse $\inv_X(x) = \pi(\inv_G(u))$, where $u\in\pi^{-1}(x)$.
\end{prop}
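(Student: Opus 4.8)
The proof requires verifying the three axioms of Definition \ref{nval_group_def} for $\mu$, after first checking that formula (\ref{coset}) is well defined, i.e.\ independent of the choice of representatives $u\in\pi^{-1}(x)$ and $v\in\pi^{-1}(y)$. Throughout, two facts do all the work: (i) $\pi$ is constant on $H$-orbits, so $\pi(h(a))=\pi(a)$ for every $h\in H$, $a\in G$; and (ii) each $h\in H$ is a group automorphism of $G$, so $h(a)\cdot h(b)=h(a\cdot b)$ and $h(a^{-1})=h(a)^{-1}$. Combined with the elementary remark that left translation $h\mapsto h_0h$, right translation $h\mapsto hh_0$, and composites $h\mapsto h_0^{-1}hh_1$ are all bijections of the finite set $H$, these will reduce every verification to a reindexing of the defining multiset.

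For well-definedness, any other representatives are $u'=h_1(u)$, $v'=h_2(v)$ with $h_1,h_2\in H$, since $\pi^{-1}(x)$ and $\pi^{-1}(y)$ are single $H$-orbits. By (ii),
$$\mu_0(u',h(v'))=h_1(u)\cdot h(h_2(v))=h_1\bigl(u\cdot (h_1^{-1}hh_2)(v)\bigr),$$
so by (i) we get $\pi(\mu_0(u',h(v')))=\pi\bigl(u\cdot (h_1^{-1}hh_2)(v)\bigr)$; as $h$ runs over $H$ so does $h_1^{-1}hh_2$, hence the multiset in (\ref{coset}) is unchanged. The identity axiom is immediate: taking $u=e_G$ as a representative of $e_X$, $\mu(e_X,x)=[\pi(h(v))\mid h\in H]=[\pi(v)\mid h\in H]=[x,\dots,x]$ by (i), and symmetrically $\mu(x,e_X)=[\pi(u\cdot h(e_G))\mid h\in H]=[\pi(u)\mid h\in H]$. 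For the inverse, $\inv_X$ is well defined because $\pi(h_1(u)^{-1})=\pi(h_1(u^{-1}))=\pi(u^{-1})$, and the term of $\mu(\pi(u^{-1}),\pi(u))$ indexed by $h=\id$ equals $\pi(u^{-1}\cdot u)=\pi(e_G)=e_X$, so $e_X\in\inv_X(x)\ast x$; likewise $e_X\in x\ast\inv_X(x)$.

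The substance of the proof is associativity. Fix representatives $u,v,t$ of $x,y,z$. For the right-hand $n^2$-multiset, $x\ast y=[\pi(u\cdot h_1(v))\mid h_1\in H]$, and for index $h_1$ the representative $u\cdot h_1(v)$ may be used when multiplying by $z$, giving
$$[w\ast z\mid w\in x\ast y]=\bigl[\pi\bigl(u\cdot h_1(v)\cdot h_3(t)\bigr)\;\big|\;h_1,h_3\in H\bigr].$$
For the left-hand one, $y\ast z=[\pi(v\cdot h_2(t))\mid h_2\in H]$, and for index $h_2$, using (ii),
$$x\ast\pi(v\cdot h_2(t))=\bigl[\pi\bigl(u\cdot h_1(v\cdot h_2(t))\bigr)\mid h_1\in H\bigr]=\bigl[\pi\bigl(u\cdot h_1(v)\cdot (h_1h_2)(t)\bigr)\mid h_1\in H\bigr],$$
so $[x\ast w\mid w\in y\ast z]=\bigl[\pi\bigl(u\cdot h_1(v)\cdot (h_1h_2)(t)\bigr)\;\big|\;h_1,h_2\in H\bigr]$. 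The substitution $(h_1,h_2)\mapsto(h_1,h_1h_2)$ is a bijection of $H\times H$ identifying this with the previous multiset (set $h_3=h_1h_2$), so the two $n^2$-multisets coincide.

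The only real subtlety — requiring care rather than ideas — is the bookkeeping of multiplicities when forming the iterated multisets $[x\ast w\mid w\in y\ast z]$ and $[w\ast z\mid w\in x\ast y]$: one must treat $y\ast z$ (resp.\ $x\ast y$) as the $H$-indexed family in (\ref{coset}) rather than as a set, picking for index $h_2$ (resp.\ $h_1$) the specific representative $v\cdot h_2(t)$ (resp.\ $u\cdot h_1(v)$), and then note that the resulting total multiset is independent of these choices precisely by the well-definedness established above. Once this convention is fixed, every step is a reindexing by a bijection of $H$ or of $H\times H$, and nothing beyond the automorphism identity $h(a)\cdot h(b)=h(ab)$ is needed.
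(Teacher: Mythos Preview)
Your proof is correct and complete. The paper itself does not supply a proof of this proposition: it is stated as a known result, attributed via the citation \cite{Buchstaber}, and immediately followed by Definition~\ref{coset_defi} with no intervening argument. So there is no ``paper's own proof'' to compare against; you have filled in what the paper leaves to the reference. Your argument is the standard one --- well-definedness via the bijection $h\mapsto h_1^{-1}hh_2$ of $H$, and associativity via the bijection $(h_1,h_2)\mapsto(h_1,h_1h_2)$ of $H\times H$ --- and your final paragraph correctly isolates the one point that actually requires care, namely that the iterated multisets must be indexed by $H$ (respectively $H\times H$) rather than first collapsed to sets.
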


\begin{defi}\label{coset_defi}
For each group $G$ and a subgroup $H$ of order $n$ in the group $\Aut(G)$, the $n$-valued group on the orbit set $G/H$ from Proposition \ref{coset_theorem} is called a {\it coset group} or the {\it coset construction}.
\end{defi}

Note that the axioms of $n$-valued multiplication do not imply the uniqueness of the inverse element map. However, in the case of the coset construction, the inverse is indeed unique.

\begin{defi}
An {\it $n$-valued dynamical system} $T$ on a space $X$ is a map $T: X\to \Sym^n(X)$.
\end{defi}

One may think of $X$ as a state space. Then an $n$-valued dynamics $T$ assigns to each element $x$ the possible states $T(x) = [x_1\dots x_n]$ at time $t+1$ as a function of the state $x$ at time $t$.

\begin{defi}\label{group_action}
An {\it action of an $n$-valued group $G$ on a space} $X$ is a map
$$\circ: G\times X\to \Sym^n(X),\ g\circ x = [x_1\dots x_n],$$
such that

\begin{itemize}

\item For all $g_1, g_2\in G$ and all $x\in X$, the following $n^2$-multisets coincide:
$$g_1\ast(g_2\circ x) = [g_1\circ x_1\dots g_1\circ x_n]$$
and
$$(g_1\ast g_2)\circ x = [h_1\circ x\dots h_n\circ x],$$
where $g_2\circ x = [x_1\dots x_n]$ and $g_1\ast g_2 = [h_1\dots h_n]$.

\item $e\ast x = [x\dots x]$ for all $x\in X$.

\end{itemize}

\end{defi}

\begin{example}
Every $n$-valued group $G$ acts on itself by left translations, just like single-valued groups. As shown in the recent work \cite{Posadskiy}, not every 2-valued dynamical system on a set $X$ can be realized by the action of some $n$-valued group on $X$.
\end{example}

In \cite{Buchstaber_Veselov}, a condition for integrability of an $n$-valued dynamical system on a space $X$ is formulated in terms of a representation of some $n$-valued group $G$ on $X$. In \cite{Yagodovskii}, a sufficient condition for the integrability of multivalued dynamics using a multivalued group is obtained.

\section{Symmetric $n$-Algebraic $n$-Valued Groups}\label{symmetric_n-algebraic_n-valued_groups}

Consider the operation $\ast$ that assigns to elements $x$ and $y$ in the set of complex numbers $\Cc$, such that $F_0(x, y)\neq 0$, the multiset $x\ast y$ of the roots of the polynomial in the variable $z$
\begin{equation}\label{polynom_P}
P(z; x, y) = \sum\limits_{k = 0}^nF_{n-k}(x, y)z^{k},
\end{equation}
where $F_{n-k}(x,y)\in\Cc[x, y]$ and
\begin{equation}\label{F_0(0, 0)=1}
F_0(0, 0) = 1.
\end{equation}
 In this context, let us formulate the axioms from Definition \ref{nval_group_def}.

From the neutral element axiom with $x = 0$, it follows that
\begin{equation}\label{G(P)_neutral_element}
P(z; 0, y) = F_0(0, y)(z - y)^n.
\end{equation}

The inverse element axiom asserts the existence of an algebraic map $\inv: \Cc\to \Cc$, such that for every $x\in\Cc$
\begin{equation}\label{G(P)_inverse_element}
F_n(x, \inv(x)) = F_n(\inv(x), x) = 0.
\end{equation}

Let $z_j = z_j(x, y)$, $j = 1\dots n$, denote the roots of the polynomial $P$. Then the associativity condition is written as the identity of two polynomials in the variable $t$:
\begin{equation}\label{assoc_condition}
\begin{aligned}
\prod\limits_{j, k = 1}^n (t - z_j(z_k(x, y), z)) = \prod\limits_{j, k = 1}^n (t - z_j(x, z_k(y, z))).
\end{aligned}
\end{equation}
Note that the coefficients of the polynomials in $t$ on both sides of equation (\ref{assoc_condition}) are polynomials in $x$, $y$, and $z$.

\begin{defi}
An {\it $n$-valued algebraic group} is an algebraic variety $X$ with an $n$-valued multiplication defined on a dense subset $Y\subset X\times X$ in the Zariski topology.
\end{defi}

\begin{remark*}
Our definition of a 1-algebraic 1-valued group uses a partially defined operation and does not, therefore, generally coincide with the classical notion of an algebraic group.
\end{remark*}

The definitions of commutativity and homomorphism are obvious for algebraic $n$-valued groups.

\begin{defi}\label{n-algebraic_n-valued}
An {\it $n$-algebraic $n$-valued group} on $\Cc$ is the set $\Cc$ with a partially defined $n$-valued multiplication
$$\xymatrix@R=5pt{\ast: (\Cc\times\Cc)\backslash \{(x, y)\mid F_0(x, y) = 0\}\ar[r] & \Sym^n(\Cc) \\
(x, y) \ar@{|->}[r] & [z_1\dots z_n]}$$
where $z_j$ are the roots of polynomial (\ref{polynom_P}), such that each of the variables $x$ and $y$ appears in it with degree at most $n$, and the polynomial satisfies conditions (\ref{F_0(0, 0)=1}), (\ref{G(P)_neutral_element}), (\ref{G(P)_inverse_element}), and (\ref{assoc_condition}). We denote the corresponding group by $\G(P)$.
\end{defi}

It is easy to see that an $n$-algebraic $n$-valued group is an $n$-valued group if and only if $F_0(x, y)\equiv 1$ in (\ref{polynom_P}).

\begin{example}
The polynomial $z - x - y$ defines a 1-algebraic 1-valued group on $\Cc$ with the usual addition as the operation.
\end{example}

\begin{example}\label{alpha_x_y_z}
Let
$$P(z; x, y) = -x - y + z + \alpha xyz,$$
where $\alpha\in\Cc$. Then we have a 1-algebraic 1-valued commutative group on $\Cc$ with the operation
$$x\ast y = \frac{x + y}{1 + \alpha xy}.$$
This algebraic abelian group is isomorphic to the algebraic 1-valued group of matrices of the form
$$\left(\begin{array}{cc}
1 & x\\
\alpha x & 1
\end{array}\right)$$ modulo multiplication by nonzero complex numbers, since the following identity holds:
$$\left(\begin{array}{cc}
1 & x\\
\alpha x & 1
\end{array}\right)\cdot \left(\begin{array}{cc}
1 & y\\
\alpha y & 1
\end{array}\right) = (1 + \alpha xy) \left(\begin{array}{cc}
1 & \frac{x + y}{1 + \alpha xy}\\
\alpha \frac{x + y}{1 + \alpha xy} & 1
\end{array}\right).$$
\end{example}

\begin{defi}\label{symmetric_n-algebraic_n-valued}
An $n$-algebraic $n$-valued group is called {\it symmetric} if the polynomial $$P(z; (-1)^nx, (-1)^ny)$$ is symmetric in the variables $x$, $y$, and $z$.
\end{defi}

\begin{prop}
Let $P(z; x, y)$ define the multiplication in a symmetric $n$-algebraic $n$-valued group. Then the following statements hold:

\begin{enumerate}[(i)]

\item $F_0(0, y) \equiv 1$,

\item $F_n(x, y) = (x - y)^n$,

\item $\inv(x) = (-1)^nx$.

\end{enumerate}
\end{prop}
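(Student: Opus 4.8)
The plan is to derive all three assertions directly from the neutral-element identity (\ref{G(P)_neutral_element}), the inverse identity (\ref{G(P)_inverse_element}), the normalization (\ref{F_0(0, 0)=1}), and the symmetry hypothesis, never invoking associativity (\ref{assoc_condition}). Set $\delta:=(-1)^{n}$ and $R(x,y,z):=P(z;\delta x,\delta y)$; by Definition \ref{symmetric_n-algebraic_n-valued} the polynomial $R$ is symmetric in $x,y,z$ (and of degree $\leqslant n$ in each variable), and since $\delta^{2}=1$ this inverts to $P(z;x,y)=R(\delta x,\delta y,z)$. The one sign fact used repeatedly is $(-\delta)^{n}=(-1)^{n(n+1)}=1$, since $n(n+1)$ is even.

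\emph{Step 1 (assertion (i)).} By (\ref{G(P)_neutral_element}), $R(0,y,z)=P(z;0,\delta y)=F_{0}(0,\delta y)\,(z-\delta y)^{n}$; writing $h(t):=F_{0}(0,\delta t)$ this reads $R(0,y,z)=h(y)\,(z-\delta y)^{n}$. Since $y-\delta z=-\delta(z-\delta y)$ and $(-\delta)^{n}=1$, we have $(y-\delta z)^{n}=(z-\delta y)^{n}$, so swapping the roles of $y$ and $z$ and using the symmetry of $R$ gives $h(y)\,(z-\delta y)^{n}=R(0,y,z)=R(0,z,y)=h(z)\,(y-\delta z)^{n}=h(z)\,(z-\delta y)^{n}$. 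Cancelling the nonzero polynomial $(z-\delta y)^{n}$ forces $h(y)=h(z)$ as an identity in the two free variables $y,z$, so $h$ is a constant, equal to $h(0)=F_{0}(0,0)=1$ by (\ref{F_0(0, 0)=1}). Hence $F_{0}(0,\delta t)\equiv 1$, i.e. $F_{0}(0,y)\equiv 1$, and along the way $R(0,y,z)=(z-\delta y)^{n}$.

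\emph{Step 2 (assertions (ii) and (iii)).} As $F_{n}(x,y)$ is the constant term of $P(\,\cdot\,;x,y)$ in $z$, we get $F_{n}(x,y)=P(0;x,y)=R(\delta x,\delta y,0)$; moving the zero entry to the front by symmetry of $R$ and substituting the formula from Step 1, $F_{n}(x,y)=R(0,\delta x,\delta y)=(\delta y-\delta^{2}x)^{n}=(\delta y-x)^{n}$. For even $n$ this is literally $(x-y)^{n}$, assertion (ii); in general $F_{n}(x,y)$ is the $n$-th power of the linear form $(-1)^{n}y-x$, which vanishes precisely when $y=(-1)^{n}x$. Assertion (iii) follows at once: by (\ref{G(P)_inverse_element}) we have $F_{n}(x,\inv(x))=0$ for all $x$, so the linear form $(-1)^{n}\inv(x)-x$ vanishes, giving $\inv(x)=(-1)^{n}x$; the companion relation $F_{n}(\inv(x),x)=0$ yields the same value, consistent with the uniqueness of the inverse.

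I do not expect a genuine obstacle — the argument is only a few lines — but the step that needs care is the sign bookkeeping with $\delta=(-1)^{n}$, specifically the cancellation $(-\delta)^{n}=1$ which lets the two symmetric evaluations $R(0,y,z)$ and $R(0,z,y)$ be compared factor by factor, together with the elementary remark that a polynomial $h(t)$ with $h(y)\equiv h(z)$ in two independent variables must be constant. I would also note that the proof uses neither associativity (\ref{assoc_condition}) nor the degree restriction of Definition \ref{n-algebraic_n-valued}, so the proposition in fact holds for any $P$ obeying (\ref{F_0(0, 0)=1}), (\ref{G(P)_neutral_element}), (\ref{G(P)_inverse_element}) and the symmetry of Definition \ref{symmetric_n-algebraic_n-valued}.
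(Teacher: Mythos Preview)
Your argument is correct, and in fact more careful than the paper's, whose proof reads in its entirety: ``Note that from conditions (\ref{F_0(0, 0)=1}) and (\ref{G(P)_neutral_element}) it follows that $F_0(0, y) \equiv 1$. The rest is obvious.'' Two points deserve comment.

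For (i), the paper's implicit reasoning is different from yours: since Definition~\ref{n-algebraic_n-valued} caps the degree of $P$ in $y$ at $n$, and (\ref{G(P)_neutral_element}) writes $P(z;0,y)=F_0(0,y)(z-y)^n$, the factor $F_0(0,y)$ must be constant in $y$, hence $1$ by (\ref{F_0(0, 0)=1}). Your route via the symmetry $R(0,y,z)=R(0,z,y)$ avoids the degree bound entirely, so your closing remark is justified: your proof of (i) and (iii) needs only (\ref{F_0(0, 0)=1}), (\ref{G(P)_neutral_element}), (\ref{G(P)_inverse_element}) and the symmetry hypothesis.

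For (ii) you have in fact uncovered a slip in the paper's statement. Your computation $F_n(x,y)=(\delta y - x)^n$ with $\delta=(-1)^n$ is correct; for even $n$ this is $(x-y)^n$, but for odd $n$ it equals $-(x+y)^n$, not $(x-y)^n$. The paper's own expansion in the proof of Theorem~\ref{universal_3-valued_group} confirms this: the constant term of $P_3(z;-x,-y)$ is $(x+y)^3$, so $F_3(x,y)=-(x+y)^3$. Indeed (ii) and (iii) as stated are mutually inconsistent for odd $n$, since $F_n(x,\inv(x))=0$ with $F_n(x,y)=(x-y)^n$ would force $\inv(x)=x$. So the gap is in the paper's formulation, not in your proof; the intended content of (ii) is your formula $F_n(x,y)=((-1)^n y - x)^n$, whose unique zero $y=(-1)^n x$ yields (iii).
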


\begin{proof}
Note that from conditions (\ref{F_0(0, 0)=1}) and (\ref{G(P)_neutral_element}) it follows that $F_0(0, y) \equiv 1$. The rest is obvious.
\end{proof}

Note that the map $\inv$ is uniquely defined in this case.

Clearly, every symmetric $n$-algebraic $n$-valued group is commutative.

\begin{example}\label{G(p_n)}
For each natural number $n$, the group $\G_n$ from Example \ref{G_n_defi} is a symmetric $n$-algebraic $n$-valued group $\G(p_n)$ corresponding to the integer polynomial
$$p_n(z; (-1)^nx, (-1)^ny) = \prod\limits_{r = 1}^n (z - (\sqrt[n]{x} + \epsilon^r\sqrt[n]{y})^n),$$
where $\epsilon$ is a primitive $n$th root of unity. These polynomials were first introduced in \cite{Buchstaber_Novikov} in the context of the theory of $n$-valued groups. The problem of describing the structure of the coefficients of the polynomials $p_n$ was posed in \cite{Buchstaber}, where also the explicit form of $p_n(z; x, y)$ was given for $1\leqslant n\leqslant 7$. Results in this direction are obtained further in the present work.
\end{example}

For each natural number $n$, define the polynomial
\begin{equation}\label{P_universal}
P_n(z; x, y) = \sum\limits_{k=0}^n\left(\sum\limits_{i,j = 0}^n a_{ijk}x^iy^j\right) z^k
\end{equation}
with indeterminate coefficients and the following grading:
$$\deg a_{ijk}  = 2(n - i - j -k),$$
$$\deg x = \deg y = \deg z = 2.$$

The homogeneous summand
$$\sum\limits_{i+j+k = n}a_{ijk}x^iy^jz^k$$
in the polynomial (\ref{P_universal}) will be called the {\it homogeneous $n$-component $P_n^{\h}$ of the polynomial $P_n$}.

It is easy to verify that if (\ref{P_universal}) defines a multiplication law in an $n$-valued algebraic group, then its homogeneous $n$-component also defines a certain $n$-valued multiplication law.

In the problem of classifying algebraic $n$-valued groups, an important initial task is the classification of the polynomials $P_n^{\h}$. Results in this direction are presented in the works \cite{Buchstaber75, Kholodov81, Kholodov84}.

\begin{defi}
The {\it universal symmetric $n$-algebraic $n$-valued group over $\Cc$ with fixed homogeneous $n$-component $P_n^{\h}$} is defined by the polynomial (\ref{P_universal}) with coefficients in the ring
$$\Cc[a_{ijk}\mid \deg a_{ijk} < 0, i, j, k \in {1\dots n}]/\mathcal{I},$$
where $\mathcal{I}$ is the ideal defined by the axioms of a symmetric $n$-algebraic $n$-valued group.
\end{defi}

For $n = 2$, from the degree count and algebraic independence of the elementary symmetric polynomials $\sigma_1, \sigma_2$, and $\sigma_3$ in the graded variables $x, y$, and $z$, it follows that each polynomial (\ref{polynom_P}) can be written in the form
\begin{equation}\label{P_2_universal}
P_2(z; -x, -y) = \sigma_1^2 - 4\sigma_2 + k_2\sigma_3 + k_8\sigma_3^2 + k_4\sigma_1\sigma_3 + k_6\sigma_2\sigma_3
\end{equation}
for indeterminate graded complex coefficients $k_j$ with $\deg k_j = -j < 0$. As we will show below, in this case the homogeneous component $P_2^{\h}$ is uniquely determined and coincides with the polynomial $p_2(z; x, y)$ in the notation of Example \ref{G(p_n)}.

With respect to this grading, the polynomial $P_2$ becomes homogeneous in the variables $x$, $y$, $z$, and $k_j$. After expanding, we obtain
\begin{align}
P_2(z; -x, -y) &= z^2 \left(k_8 x^2 y^2+k_4 x y+k_6 x^2 y+k_6 x y^2+1\right) \notag \\
&+ z \left(k_2 x y+k_4x^2 y+k_4 x y^2+k_6 x^2 y^2-2 x-2 y\right) \notag \\
&+ (x - y)^2\notag \\
&=: a(x, y)z^2 + b(x, y)z + c(x, y).\label{P_2_a_b_c}
\end{align}

The associativity equation for the case $n = 2$ takes the form:
\begin{equation}\label{Q(t)}
\prod\limits_{i,j = 1}^2 (t - z_{ij}) = \prod\limits_{i, j =1}^2 (t - \widetilde{z_{ij}}),
\end{equation}
where $z_{ij} = z_i(z_j(x, y), z)$ and $\widetilde{z_{ij}} = z_i(x, z_j(y ,z))$.

Equating the coefficients of $t^3$ in (\ref{Q(t)}) and using Vieta’s formulas for the roots of the polynomial $P_2$ with respect to the variable $z$, we obtain the condition
\begin{equation}\label{fractions}
\frac{b_1}{a_1} + \frac{b_2}{a_2} = \frac{\widetilde{b_1}}{\widetilde{a_1}} + \frac{\widetilde{b_2}}{\widetilde{a_2}},
\end{equation}
where $a_j = a(z_j(x, y), z)$, $b_j = b(z_j(x, y), z)$, $c_j = c(z_j(x, y), z)$, $\widetilde{a_j} = a(x, z_j(y, z))$, $\widetilde{b_j} = b(x, z_j(y, z))$, $\widetilde{c_j} = c(x, z_j(y, z))$, $j = 1, 2$. Reducing the fractions in (\ref{fractions}) to a common denominator, we get the equality
\begin{equation}\label{A_1B_2=A_2B_1}
A_1B_2 = A_2B_1,
\end{equation}
where
$$
\begin{aligned}
A_1 &= b_1a_2 + b_2a_1, \
A_2 &= \widetilde{b_1}\widetilde{a_2} + \widetilde{b_2}\widetilde{a_1},  \
B_1 &= a_1a_2, \
B_2 &= \widetilde{a_1}\widetilde{a_2}.
\end{aligned}
$$
Since the pairs of roots $(z_1(x, y), z_2(x, y))$ and $(z_1(y, z), z_2(y, z))$ enter the polynomials $A_1B_2$ and $A_2B_1$ symmetrically, we again apply Vieta’s formulas to rewrite the equality (\ref{A_1B_2=A_2B_1}) in terms of the coefficients of the polynomial (\ref{P_2_a_b_c}).

As calculations using the computer algebra system {\tt Wolfram Mathematica} show, the coefficient of $t^3$ in the polynomial $Q = A_1B_2 - A_2B_1$ is divisible by $4k_8 - k_4^2 + k_6k_2$, and the coefficient of the monomial $x^3yz$ is equal to $-6(4k_8 - k_4^2 + k_6k_2)$. This means that the parameters of the universal symmetric 2-algebraic 2-valued group satisfy the relation
\begin{equation}\label{f^2-gd=0}
4k_8 = k_4^2 - k_6k_2.
\end{equation}

The condition for matching the coefficients at $t^2$ in (\ref{Q(t)}) becomes the equality
\begin{equation}
\frac{c_1}{a_1} + \frac{c_2}{a_2} + \frac{b_1b_2}{a_1a_2} = \frac{\widetilde{c_1}}{\widetilde{a_1}} + \frac{\widetilde{c_2}}{\widetilde{a_2}} + \frac{\widetilde{b_1}\widetilde{b_2}}{\widetilde{a_1}\widetilde{a_2}}.
\end{equation}

For the coefficient at $t$, we get
$$
\frac{1}{a_1a_2}(b_2c_1 + b_1c_2) = \frac{1}{\widetilde{a_1}\widetilde{a_2}}(\widetilde{b_2}\widetilde{c_1} + \widetilde{b_1}\widetilde{c_2}).
$$

At $t^0$ we have
$$
\frac{c_1c_2}{a_1a_2} = \frac{\widetilde{c_1}\widetilde{c_2}}{\widetilde{a_1}\widetilde{a_2}}.
$$

Analogous computations in {\tt Wolfram Mathematica} show that the differences in coefficients at each $t^j$, $j \in {2, 1, 0}$ are divisible by $4k_8 - k_4^2 + k_6k_2$.

Thus, we obtain the following classification result:

\begin{theorem}\label{universal_2-valued_group}
The universal symmetric 2-algebraic 2-valued group (\ref{P_2_universal}) over $\Cc$ is defined over the coefficient ring
$$\Cc[k_2, k_4, k_6, k_8]/(4k_8 - k_4^2 + k_6k_2)\cong \Cc[k_2, k_4, k_6],$$
i.e., the universal family of polynomials $P_2$ has the form:
\begin{equation}\label{family}
P_2(z; x, y) = \sigma_1^2 - 4\sigma_2 + 2k_2\sigma_3 + (k_4^2 - k_6k_2)\sigma_3^2 + 2k_4\sigma_1\sigma_3 + 2k_6\sigma_2\sigma_3.
\end{equation}
\end{theorem}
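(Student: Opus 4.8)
The plan is to combine the computations already recorded above into the single identity $\mathcal{I}=(g)$ with $g:=4k_8-k_4^2+k_6k_2$ (the polynomial of the relation (\ref{f^2-gd=0})), and then to change generators of the coefficient ring. The starting point is that the form (\ref{P_2_universal}), equivalently its expansion (\ref{P_2_a_b_c}), \emph{already} incorporates the normalization $F_0(0,0)=1$, the neutral-element axiom (\ref{G(P)_neutral_element}) and the inverse axiom (\ref{G(P)_inverse_element}): this is exactly what the degree count and the algebraic independence of $\sigma_1,\sigma_2,\sigma_3$ gave, and a short check shows that these axioms, together with the symmetry axiom and the degree restrictions on $x,y$, leave precisely the four free graded parameters $k_2,k_4,k_6,k_8$ — in particular the homogeneous $2$-component $P_2^{\h}$ is forced to equal $\sigma_1^2-4\sigma_2=p_2(z;x,y)$, as asserted just before the theorem. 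Hence the only axiom not yet built in is associativity, so the image of $\mathcal{I}$ in $\Cc[k_2,k_4,k_6,k_8]$ is generated solely by the relations extracted from the polynomial identity (\ref{Q(t)}), the case $n=2$ of (\ref{assoc_condition}).

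Next I would make those relations explicit. Writing (\ref{P_2_a_b_c}) as $a(x,y)z^2+b(x,y)z+c(x,y)$ and using Vieta to express the elementary symmetric functions of $z_1(x,y),z_2(x,y)$ (and of $z_1(y,z),z_2(y,z)$) through $a,b,c$, each coefficient equation of (\ref{Q(t)}) — those of $t^3,t^2,t^1,t^0$; the $t^4$-coefficient is matched automatically, both sides being monic of degree $4$ — becomes, after clearing the denominators $a_1a_2$ and $\widetilde{a_1}\widetilde{a_2}$, a polynomial identity in $x,y,z$ with coefficients in $\Cc[k_2,k_4,k_6,k_8]$. (Clearing denominators is legitimate because the multiplication is defined on the Zariski-dense locus $F_0\neq0$, i.e.\ $a\neq0$, so a rational identity there is equivalent to the cleared polynomial identity everywhere.) For $t^3$ this cleared identity is exactly (\ref{A_1B_2=A_2B_1}). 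The computational core — carried through the computer algebra system, as the preceding discussion indicates — is then the two facts: \textbf{(a)} every coefficient of each of these four cleared identities is divisible by $g$; and \textbf{(b)} the coefficient of $x^3yz$ in the cleared $t^3$-identity equals $-6\,g$. By (a), $\mathcal{I}\subseteq(g)$; by (b), $g\in\mathcal{I}$; hence $\mathcal{I}=(g)$.

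The rest is bookkeeping. Since $g$ has degree one in $k_8$ with invertible leading coefficient $4$, eliminating $k_8$ through $k_8\mapsto\frac{1}{4}(k_4^2-k_6k_2)$ identifies $\Cc[k_2,k_4,k_6,k_8]/(g)$ with $\Cc[k_2,k_4,k_6]$, which is the assertion about the coefficient ring. Substituting this value of $k_8$ into (\ref{P_2_universal}) and renormalizing the surviving parameters (replacing each $k_j$ by $2k_j$, a bijection of $\Cc^3$) turns (\ref{P_2_universal}) into the family (\ref{family}). As a consistency check, the member $k_2=k_4=k_6=0$ of (\ref{family}) is $\sigma_1^2-4\sigma_2=p_2(z;x,y)$, so the group $\G_2$ of Example \ref{G(p_n)} indeed sits inside this family.

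The main obstacle is fact (a), the \emph{sufficiency} half of $\mathcal{I}=(g)$. Getting $g\in\mathcal{I}$ needs only one coefficient, but showing that no relation beyond $g$ is forced requires expanding the entire degree-$4$-in-$t$ identity (\ref{Q(t)}) with generic $k_2,k_4,k_6,k_8$ and verifying divisibility by $g$ of every $x,y,z$-coefficient at all four powers of $t$ — a bulky symbolic computation, which is why it is done by machine. A purely conceptual shortcut seems hard to come by: one would like to realize the family (\ref{family}) through a construction for which associativity holds automatically, e.g.\ a coset construction in the sense of Proposition \ref{coset_theorem} (the quotient by the inverse involution of a family of one-dimensional abelian algebraic groups, in the spirit of Example \ref{alpha_x_y_z}, degenerations of elliptic ones included), but it is not evident that such a construction already reaches all three essential parameters, so the direct verification via (\ref{Q(t)}) is the cleanest route here.
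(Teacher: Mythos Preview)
Your proposal is correct and follows essentially the same route as the paper: you start from the four-parameter form (\ref{P_2_universal}), reduce the remaining axiom to the associativity identity (\ref{Q(t)}), clear denominators via Vieta as in (\ref{A_1B_2=A_2B_1}), and then rely on the same computer-algebra verification that every cleared coefficient at $t^3,t^2,t^1,t^0$ is divisible by $g=4k_8-k_4^2+k_6k_2$ while the $x^3yz$-coefficient equals $-6g$, giving $\mathcal I=(g)$. Your final step of eliminating $k_8$ and rescaling $k_j\mapsto 2k_j$ to reach (\ref{family}) is exactly the normalization implicit in the paper's statement.
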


Thus, for $n = 2$ we obtain a 3-parameter family of polynomials $P_2$.

In \cite{Buchstaber75}, a classification was obtained for 2-valued formal groups corresponding to polynomials in the variable $z$ of the form
$$z^2 - \Theta_1z + \Theta_2,$$
where $\Theta_1 = \Theta_1(x, y)$ and $\Theta_2 = \Theta_2(x, y)$ are formal power series over some commutative ring $\kk$. In the case of $\kk$ is a $\Q$-algebra, the universal 2-valued formal group is defined over the graded ring $\Q[a_{4n} \mid n\in \mathbb{N}]$.

In \cite{Buchstaber90}, a symmetric 2-algebraic 2-valued group was constructed using the coset method for an involution of order 2 on a nonsingular elliptic curve in Weierstrass form. The resulting multiplication law coincides with (\ref{family}), but under an additional condition ensuring nonsingularity of the elliptic curve.

The nonsingularity condition was lifted in recent work \cite{Buchstaber_Veselov19}.

Note that the works \cite{Buchstaber90, Buchstaber_Veselov19} used the fundamental addition theorem for Weierstrass elliptic functions. Our method uses only the concept of a universal symmetric $n$-algebraic $n$-valued group.

Let us now describe all symmetric 3-algebraic 3-valued groups.

The general form of the polynomials $P_3$ is as follows:
\begin{equation}\label{P_3}
\sigma_1^3 + k_0\sigma_3 + k_6\sigma_3^2 + k_{12}\sigma_3^3 + k_2\sigma_1\sigma_3 + k_4\sigma_2\sigma_3 + \ell_6\sigma_1\sigma_2\sigma_3 + \ell_4\sigma_1^2\sigma_3 + k_8\sigma_2^2\sigma_3 + \ell_8\sigma_3^2\sigma_1 + k_{10}\sigma_3^2\sigma_2,
\end{equation}
where the undetermined coefficients are graded by $\deg(k_i) = -i$, $\deg(\ell_j) = -j$, and each of the variables $x$, $y$, and $z$ has degree 2.

\begin{theorem}\label{universal_3-valued_group}
There are two classes of symmetric 3-algebraic 3-valued groups:

\begin{enumerate}[(i)]

\item If $k_0 = -27$, we have $$P_3(z; -x, -y) = \sigma_1^3 - 27 \sigma_3 + 18 c \sigma_1^2 \sigma_3 - 54 c \sigma_2 \sigma_3 - 27 c^2 \sigma_2^2 \sigma_3 + 81 c^2 \sigma_1 \sigma_3^2.$$ The corresponding homogenious 3-component is $P_3^{\h}(z; -x, -y) = \sigma_1^3 -27\sigma_3$ (the 3-valued group $\G_3$ from Example \ref{G_n_defi}).

\item If $k_0 = 0$, we have
$$P_3(z; -x, -y) = \left(\sigma_1 + \alpha\sigma_3\right)^3,$$
The corresponding homogenious 3-component is $P_3^{\h}(z; -x, -y) = \sigma_1^3$. In this case, the 3-valued group is obtained by the diagonal construction \cite[Lemma 1]{Buchstaber}
$$\xymatrix{G\times G\ar[r]^{\;\;\;\;\mu} & G\ar[rr]^{\Sym^3\circ\diag\;\;\;\;\;\;\;} & & \Sym^3(G)}$$
from the 1-algebraic 1-valued group $G$, constructed by the polynomial $P_1(z; -x, -y) = \sigma_1 + \alpha\sigma_3$ (see Example \ref{alpha_x_y_z}).

\end{enumerate}

\end{theorem}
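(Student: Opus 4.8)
The plan is to follow the same route as for $n=2$ (Theorem~\ref{universal_2-valued_group}), starting from the normal form (\ref{P_3}). First I would observe that (\ref{P_3}) already incorporates every axiom of a symmetric $3$-algebraic $3$-valued group \emph{except} associativity: the grading $\deg x=\deg y=\deg z=2$, $\deg k_i=-i$, $\deg\ell_j=-j$ together with the requirement that each of $x,y,z$ occur to degree at most $3$ forces $P_3(z;-x,-y)$ to be a $\Cc$-combination of $\sigma_1^3$ and the ten monomials $\sigma_3\cdot\sigma_1^a\sigma_2^b\sigma_3^c$ with $a+b+c\le 2$; and the identity axiom (\ref{G(P)_neutral_element}), which by (\ref{F_0(0, 0)=1}) gives $F_0(0,y)\equiv 1$, says that on $x=0$ the polynomial is a perfect cube, hence $P_3(z;-x,-y)-\sigma_1^3$ is a symmetric polynomial vanishing on each coordinate hyperplane and so divisible by $\sigma_3$ — which is precisely why every monomial in (\ref{P_3}) besides $\sigma_1^3$ carries a factor $\sigma_3$. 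The inverse axiom is then automatic by the Proposition preceding Example~\ref{G(p_n)}. So it remains only to impose (\ref{assoc_condition}) on (\ref{P_3}).

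Next I would convert (\ref{assoc_condition}) into a polynomial system in the eleven parameters $k_0,k_2,k_4,k_6,k_8,k_{10},k_{12},\ell_4,\ell_6,\ell_8$. Writing $P_3(z;x,y)=a(x,y)z^3+b(x,y)z^2+c(x,y)z+d(x,y)$ and letting $z_1,z_2,z_3$ be its roots in $z$, the left-hand side of (\ref{assoc_condition}) equals $\prod_{k=1}^3 a\bigl(z_k(x,y),z\bigr)^{-1}\,P_3\bigl(t;z_k(x,y),z\bigr)$, which is symmetric in $z_1(x,y),z_2(x,y),z_3(x,y)$ and therefore, by Vieta's formulas, a rational function of $t,x,y,z$ whose numerator and denominator are polynomials over $\Cc[k_i,\ell_j]$; the right-hand side is treated the same way. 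Clearing denominators and equating coefficients of the monomials in $t,x,y,z$ yields the ideal $\mathcal I$ cutting out the universal object. As for $n=2$, the actual elimination is performed in {\tt Wolfram Mathematica}.

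The key structural fact to extract from this system is a single factored relation involving only $k_0$ — one expects it to be equivalent to $k_0(k_0+27)=0$ — which reflects that the homogeneous $3$-component $\sigma_1^3+k_0\sigma_3$ must itself be an associative $3$-valued law (cf.\ the general principle stated after (\ref{P_universal}) and the classification of homogeneous components in \cite{Buchstaber75,Kholodov81,Kholodov84}). One then analyses the two branches. For $k_0=-27$, the remaining equations should force $k_2=k_6=k_{10}=k_{12}=\ell_6=0$ and parametrise the rest by a single $c\in\Cc$ via $\ell_4=18c$, $k_4=-54c$, $k_8=-27c^2$, $\ell_8=81c^2$, giving family~(i), whose homogeneous part is $p_3$ and hence the group $\G_3$ of Example~\ref{G_n_defi}. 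For $k_0=0$, the equations should force $P_3$ to be a perfect cube $P_1^3$; identifying $P_1$ with the one-parameter family $P_1(z;-x,-y)=\sigma_1+\alpha\sigma_3$ of $1$-algebraic $1$-valued group laws from Example~\ref{alpha_x_y_z} then gives family~(ii) and exhibits it as the diagonal construction \cite[Lemma 1]{Buchstaber} applied to that $1$-valued group.

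The main obstacle is the associativity elimination itself. Unlike the case $n=2$, where comparison reduced to a handful of fractions, here one is equating two degree-$9$ polynomials in $t$ whose coefficients have substantial degree in $x,y,z$ and are nonlinear in eleven parameters, so the Gröbner/resultant computation is heavy and its output has to be organized by hand — typically by first reading off a few low-degree monomial coefficients (the analogue of the coefficient of $x^3yz$ in the $n=2$ argument) to obtain the relation in $k_0$ together with some linear relations, and then bootstrapping within each branch. A secondary point needing care is the branch $k_0=0$: one must show that associativity genuinely \emph{forces} $P_3$ to be a perfect cube, not merely that it is compatible with one — this can be done by direct inspection of the surviving equations, or by arguing that any $3$-valued law whose homogeneous $3$-component is the triple-root law $\sigma_1^3$ must come from the diagonal construction over a $1$-algebraic $1$-valued group.
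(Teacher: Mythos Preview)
Your necessity argument is essentially the paper's own strategy: expand $P_3$ in $x,y,z$, form the difference $Q(t)$ of the two sides of (\ref{assoc_condition}), and read off coefficients in {\tt Mathematica}, using the grading to isolate a factored relation $k_0^2(k_0+27)=0$ first and then bootstrap within each branch. The paper carries this out in detail (nine candidate cases for $k_0=-27$, three for $k_0=0$, each eliminated or confirmed by looking at further coefficients of $Q_7$, $Q_8$), but conceptually your outline matches.

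The gap is sufficiency for family~(i) when $c\neq 0$. Because you only inspect \emph{selected} monomial coefficients of $Q(t)$ --- as you say yourself, ``reading off a few low-degree monomial coefficients \ldots\ and then bootstrapping'' --- what you obtain are only necessary conditions on the parameters; you never verify that the resulting one-parameter family actually satisfies the full associativity identity (\ref{assoc_condition}). For (ii) you close this gap via the diagonal construction over the $1$-valued group of Example~\ref{alpha_x_y_z}, and for (i) with $c=0$ you invoke $\G_3$; but for general $c$ you offer nothing. The paper handles this by an independent geometric construction: it realises family~(i) as (a M\"obius transform of) the coset $3$-valued group $\E_{\langle\tau\rangle}$ obtained from the cubic $\E=\{y^2=x^3+c\}$ with its order-$3$ automorphism $\tau:(x,y)\mapsto(\epsilon x,y)$ and the $3$-fold cover $\pi:\E\to\Cc P^1$, $(x,y)\mapsto y$. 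A direct Vieta computation then shows that this coset group is governed precisely by the polynomial in~(i), which supplies associativity for all $c$. Without this (or an equivalent verification that the full ideal $\mathcal I$ vanishes on the curve you have found), your argument does not establish that the family~(i) consists of genuine $3$-valued groups.
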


\begin{proof}

Let us write the polynomial (\ref{P_3}) in terms of the variables $x$, $y$, and $z$:
$$\begin{aligned}P_3 &= \textcolor{blue}{z^3} (k_{12} x^3 y^3+k_{10} x^3 y^2+k_8 x^3 y+k_{10} x^2 y^3+2 k_8 x^2 y^2 \\ &+ k_8 x y^3+\ell_8 x^2 y^2+\ell_6 x^2 y+\ell_6 x y^2+\ell_4 x y+1) \\ &+ \textcolor{blue}{z^2} (k_{10} x^3 y^3+2 k_8 x^3 y^2+2 k_8 x^2 y^3+k_6 x^2 y^2 \\ &+ k_4 x^2 y+k_4 x y^2+k_2 x y+\ell_8 x^3 y^2+\ell_6 x^3 y+\ell_8 x^2 y^3 \\ &+ 3 \ell_6 x^2 y^2+2 \ell_4 x^2 y+\ell_6 x y^3+2 \ell_4 x y^2+3 x+3 y) \\ &+ \textcolor{blue}{z} (k_8 x^3 y^3+k_4 x^2 y^2+k_2 x^2 y+k_2 x y^2+k_0 x y+\ell_6 x^3 y^2+\ell_4 x^3 y+\ell_6 x^2 y^3 \\ &+ 2 \ell_4 x^2 y^2+\ell_4 x y^3+3 x^2+6 x y+3 y^2) \\ &+ (x + y)^3 \\ &=: a(x, y)\textcolor{blue}{z^3} + b(x, y)\textcolor{blue}{z^2} + c(x, y)\textcolor{blue}{z} + d(x, y).\end{aligned}$$

Denote by $Q(t)$ the 8\textit{th}-degree polynomial of the variable $t$ obtained from the difference between the left and right-hand sides of equality (\ref{assoc_condition}). Also, let $Q_i(x, y, z)$ denote its coefficient of $t^i$ for each $i = 0\dots 8$.

From the introduced grading, it follows that each monomial $x^\alpha y^\beta z^\gamma$ in $Q(t)$ is accompanied by a homogeneous polynomial in the indeterminate coefficients $k_i$, $\ell_j$. Suppose that $\deg t = 2$. Then $Q(t)$ is homogeneous in $x$, $y$, $z$, $k_i$, $\ell_j$ of degree 18. 

To determine the conditions on the coefficient $k_0$, it suffices to set all other coefficients $k_i$, $\ell_j$ in $Q(t)$ to zero. A direct computation of the coefficient $Q_5(x, y, z)$ of $t^5$ in the polynomial $Q(t)$ under these assumptions in {\tt Wolfram Mathematica} shows that this coefficient includes the factor $k_0^2 \left(k_0+27\right)$. Hence, $k_0$ can only take two values: $-27$ and $0$.

Setting the corresponding partial derivatives at $x = y = z = 0$ of the polynomial in $x$, $y$, and $z$ of $t^8$ to zero gives the following system of seven algebraic equations in the indeterminate coefficients:
 \begin{equation}\label{big_system}\left\{ \begin{aligned}
-k_0 k_2 \ell_4+k_2 \ell_4+6 k_0 \ell_6+2 k_2 k_4+k_0 k_6-3 k_6 = 0 &\text{ $\;$ at }x^{2}yz,\\
\left(k_0+9\right) k_8 l_6 = 0 &\text{  $\;$ at }x^{6}yz,\\
\left(k_{0}+27\right)k_{8}^{2} =0 &\text{  $\;$ at }x^{7}yz,\\
 \begin{aligned}&9k_{2}^{2}\ell_{6}+3k_{0}k_{2}\ell_{4}^{2}-6k_{2}\ell_{4}^{2}-19k_{4}k_{2}\ell_{4}+9k_{0}k_{2}\ell_{8}-15k_{2}\ell_{8} \\ &-3k_{0}k_{6}\ell_{4}+33k_{6}\ell_{4}+10k_{0}k_{4}\ell_{6}+84k_{4}\ell_{6}-91k_{0}\ell_{4}\ell_{6} \\ &-2k_{4}^{2}k_{2}-42k_{0}k_{8}k_{2}+105k_{8}k_{2}-6k_{4}k_{6}+45k_{0}k_{10}\\ &-45k_{10}+15\ell_{4}\ell_{6} = 0\end{aligned} &\text{  $\;$ at }x^{3}y^{2}z,\\
\text{Continued on next page...}\\
\end{aligned}\right.
\end{equation}

\begin{equation*}\left\{ \begin{aligned}
\begin{aligned}&k_{0}k_{4}\ell_{6}+18k_{4}\ell_{6}-3k_{0}\ell_{4}\ell_{6}+k_{0}k_{2}k_{8} \\ &+21k_{2}k_{8}+3k_{0}k_{10}-9k_{10}+3\ell_{4}\ell_{6} = 0\end{aligned} &\text{  $\;$ at }x^{4}yz,\\
-2k_{0}\ell_{6}^{2}-4k_{0}k_{8}\ell_{4}+k_{0}k_{4}k_{8}+63k_{4}k_{8}-6\ell_{6}^{2} =0 &\text{  $\;$ at }x^{5}yz,\\
\begin{aligned}&2 k_2^2 \ell_6+2 k_2 \ell_4^2-3 k_4 k_2 \ell_4-8 k_2 \ell_8+2 k_6 \ell_4\\ &+8 k_0 k_4 \ell_6+36 k_4 \ell_6-23 k_0 \ell_4 \ell_6-18 k_0 k_8 k_2\\ &+38 k_8 k_2-k_2 k_4^2+36 k_0 k_{10}-18 k_{10}+6 \ell_4 \ell_6 = 0\end{aligned} &\text{  $\;$ at }x^{2}y^{3}z.
\end{aligned}\right.
\end{equation*}

Two more equations follow from the grading property. Let us set $k_{12} = 0$ and take the reduction of the $t^8$ coefficient modulo the ideal generated by the polynomials $x^2R$, $y^2R$, $z^2R$, $xyR$, $yzR$, $xzR$, where $R = xyz(x-z)$. This is allowed since $\deg Q_8 = 2$. Then the conditions of vanishing of the coefficients at $x$ and $y$ yield the following equations:
\begin{equation}\label{small_system}\left\{ \begin{aligned}
\begin{aligned}&-k_{0}k_{4}\ell_{4}-k_{4}\ell_{4}+k_{0}\ell_{4}^{2}-k_{0}k_{2}\ell_{6}-6k_{2}\ell_{6} \\ &-2k_{0}\ell_{8}+3k_{4}^{2}+29k_{0}k_{8}+3k_{8}-2\ell_{4}^{2}+6\ell_{8} = 0,\end{aligned}\\
\begin{aligned}&-7k_{0}\ell_{4}^{2}+7k_{4}k_{0}\ell_{4}+3k_{2}k_{0}\ell_{6}+18k_{0}\ell_{8}-2k_{2}^{2}\ell_{4}\\ &+3k_{4}\ell_{4}+18k_{2}\ell_{6}+6k_{8}k_{0}^{2}-81k_{8}k_{0}-9k_{4}^{2}-k_{2}^{2}k_{4}\\ &+3k_{2}k_{6}-9k_{8}+6\ell_{4}^{2}-18\ell_{8} = 0.\end{aligned}
\end{aligned}\right.\end{equation}

We obtain the following set of solutions to systems (\ref{big_system}) and (\ref{small_system}) for $k_0 = -27$, using $\land$ and $\lor$ to denote conjunction and disjunction respectively:

\begin{enumerate}[\bf 1)]

\item $k_2=0\land k_6=0\land k_8=0\land k_{10}=0\land \left(\ell_4=-\frac{k_4}{3}\lor \ell_4=k_4\right)\land \ell_6=0$ \\ $\land \ell_8=\frac{1}{9} \left(k_4^2-k_4 \ell_4\right)$. 

\item $k_6=k_2 k_4\land k_8=0\land k_{10}=0\land \ell_4=k_4\land \ell_6=0\land \ell_8=0\land k_2k_4(563 k_4-36 k_2^2)\neq 0$. 

\item $k_2=0\land k_4=0\land k_6=0\land k_8=0\land k_{10}=0\land \ell_4=0\land \ell_6=0\land \ell_8=0$. 

\item $k_2=0\land k_4=0\land k_6=0\land k_{10}=0\land k_8\neq 0\land \ell_4=0\land \ell_6=0\land \ell_8=13 k_8$. 

\item $k_2=0\land k_6=0\land k_{10}=0\land k_8\neq 0\land \ell_4=-\frac{k_4}{3}\land \ell_6=0$ $\land \ell_8=-3k_8\land k_4\neq 0$.

\item $k_2=0\land k_6=0\land k_{10}=0\land k_8\neq 0\land \ell_4=-\frac{k_4}{3}\land \ell_6=0\land \ell_8=\frac{1}{9} \left(-k_4 \ell_4+k_4^2+117 k_8\right)\land k_4\neq 0$

\item $k_4=0\land k_6=0\land k_8=0\land k_{10}=0\land \ell_4=0\land \ell_6=0\land \ell_8=0\land k_2\neq 0$. 

\item $k_2 k_4\neq 0\land k_6=k_2 k_4\land k_8=0\land k_6\neq 0\land k_{10}=0\land \ell_4 = k_4\land \ell_6=0\land \ell_8=0$. 

\item $k_4=\frac{36 k_2^2}{563}\land k_2\neq 0\land k_6=\frac{36 k_2^3}{563}\land k_6\neq 0\land k_8=0\land k_{10}=0\land \ell_4=\frac{36 k_2^2}{563}\land \ell_6=0\land \ell_8=0.$

\end{enumerate}

Let us consider case {\bf 1)}. If $\ell_4 = -k_4/3$, then computing the coefficient of $x^4yz$ in the polynomial $Q_7(x, y, z)$ gives the vanishing of the coefficient $k_4$. If instead $\ell_4 = k_4$, computing the coefficient of $x^4yz$ also gives the vanishing of $k_4$. Hence, case {\bf 1)} reduces to case {\bf 3)}.

Let us consider case {\bf 2)}. Computing the coefficients of $x^2yz$ and $x^3yz$ in the polynomial $Q_7(x, y, z)$ yields the system
$$\begin{cases}
k_{2}^{2}-81\ell_{4} & =0,\\
k_{2}\ell_{4} & =0,
\end{cases}$$
from which it follows that $k_2 = \ell_4 = 0$, so this case is eliminated.

Consider {\bf 3)}. Substituting these parameter values into the polynomial $Q_8(t)$, we obtain that $k_{12} = 0$. This case is contained in the modification of the case {\bf 5)} with $c = 0$ (the same argument still holds true), see below.

Let us consider case {\bf 4)}. Computing the coefficient of $x^4yz$ in the polynomial $Q_7(x, y, z)$ gives the vanishing of $k_8$, eliminating this case.

Let us consider case {\bf 5)}.  Computing the coefficient of $x^3y^2z$ in the polynomial $Q_7(t)$ gives the vanishing of $k_4^2 + 108k_8$. Hence, for each $c\in\Cc$, we get \begin{equation}\left\{\begin{aligned}\label{c_case_equations}k_0 &= -27,\\ k_2 &= 0,\\ k_4 &= -54c,\\ k_6 &= 0,\\ k_8 &= -27c^2,\\ k_{10} &= 0,\\ \ell_4 &= 18c,\\ \ell_6 &= 0,\\ \ell_8 &= 81c^2. \end{aligned}\right.\end{equation} From the coefficient of $x^5y^2z$ in the polynomial $Q_7(t)$, we get $$37 k_4^3+3996 k_8 k_4+7938 k_{12}.$$ So, $k_{12} = 0$.

Consider the case {\bf 6)}. Computing the coefficient of $x^3y^2z$ in the polynomial $Q_7(t)$ gives the vanishing of $k_4^2 + 108k_8$. Hence, we get the case the system of equations (\ref{c_case_equations}) again. From the coefficient of $x^5y^2z$ in the polynomial $Q_7(t)$, we get $$3173 k_4^3+342684 k_8 k_4+71442 k_{12} = 0.$$ So, $k_{12} = 0$. Therefore, this case coincides with {\bf 5)}. 

Cases {\bf 7)} and {\bf 8)} are eliminated for the same reason as case {\bf 2)}.

Consider {\bf 9)}. The coefficient of $x^2yz$ in the polynomial $Q_7(t)$ gives $k_2^2 = 0$. So, this case is eliminated because $k_2\neq 0$.

Thus, for $k_0 = -27$, we obtain the possible cases {\bf 3)} and {\bf 5)} satisfy the necessary conditions for 3-valued algebraic groups. 

Now let us consider the common solution set of the systems (\ref{big_system}) and (\ref{small_system}) when $k_0 = 0$:

\begin{enumerate}[\bf 1)]

\item $k_2=0\land k_6=0\land k_8=0\land k_{10}=0\land \ell_6=0\land \ell_8=\frac{1}{6} \left(k_4 \ell_4-3 k_4^2+2 \ell_4^2\right)$.

\item $k_2=0\land k_4=0\land k_6=0\land k_8=0\land k_{10}=0\land \ell_6=0\land \ell_8=\ell_4^2/3$.

\item $k_6=k_2 k_4\land k_8=0\land k_{10}=0\land \ell_4=k_4\land \ell_6=0\land \ell_8=0$.

\end{enumerate}

Let us consider case {\bf 1)}. Computing the coefficients of $x^4yz$ and $x^2y^3z$ in the polynomial $Q_7(t)$ yields the system
$$\begin{cases}
-5k_{4}\ell_{4}+k_{4}^{2}-10\ell_{4}^{2}+30\ell_{8} & =0,\\
-37k_{4}\ell_{4}+12k_{4}^{2}-74\ell_{4}^{2}+222\ell_{8} & =0,
\end{cases}$$
which has the solution $k_4=0\land \ell_8=\ell_4^2/3$. Computing the coefficient in the $x^5y^2z$ yields $$-27 k_4 \ell_4^2+k_4^2 \ell_4+18 k_4 \ell_8+63 k_{12}-42 \ell_4^3+119 \ell_8 \ell_4 = 0.$$ Substituting $k_4=0\land \ell_8=\ell_4^2/3$ leads to the equation $k_{12} = \ell_4^3/27$. So, we have \begin{equation*}\left\{\begin{aligned}\label{c_case}k_0 &= 0,\\ k_2 &= 0,\\ k_4 &= 0,\\ k_6 &= 0,\\ k_8 &= 0,\\ k_{10} &= 0,\\ k_{12} &= \ell_4^3/27,\\  \ell_6 &= 0,\\ \ell_8 &= \ell_4^3/27. \end{aligned}\right.\end{equation*}    

Case {\bf 2)}. It is contained in the case {\bf 1)}.

Let us consider case {\bf 3)}. Computing the coefficients of $x^2yz$ and $x^3y^2z$ in the polynomial $Q_7(t)$ gives the equalities $k_2 = 0$ and $k_4(10 k_2^2+33 k_4) = 0$. Hence $k_2 = k_4 = 0$, and we arrive at case {\bf 1)}.

Thus, for $k_0 = 0$ we obtain that the only possible case is {\bf 2)}.

Thus, we have the necessary conditions on the coefficients of the universal law. Now, let us prove that they are also sufficient.

For $k_0 = 0$, we get the polynomial $$\left(\sigma_1 + \frac{\ell_4}{3}\sigma_3\right)^3$$ which defines the diagonal law of the 1-valued group from Example \ref{alpha_x_y_z}.

The case $k_0 = -27$ is more interesting. For each $c\in\Cc$, consider a cubic $$\E = \{y^2 = x^3 + c\}$$ in the affine chart $\{z = 1\}\subset\Cc\!P^2$. Recall, for the sum of two points $(x_1, y_1)$ and $(x_2, y_2)$ on $\E$ we have \begin{equation}\label{sum_formula}\left\{\begin{aligned} x_3 &= -x_1 - x_2 + m^2,\nonumber\\ y_3 &= m(x_1 - x_3) - y_1\end{aligned}\right.\end{equation} where $$m = \frac{y_1 - y_2}{x_1 - x_2}.$$ The curve $\E$ has an automorphism $\tau$ of order 3 sending $(x, y)$ to $(\epsilon x, y)$ for $\epsilon = e^{2\pi i/3}$. Indeed, $$\begin{aligned}\tau(x_1, y_2)\oplus \tau(x_2, y_2) &= \left(- \epsilon x_1 - \epsilon x_2 + \left(\frac{y_1 - y_2}{\epsilon x_1 - \epsilon x_2}\right)^2, \frac{y_1 - y_2}{\epsilon x_1-\epsilon x_2}(\epsilon x_1 - \epsilon x_3) - y_1 \right)\\ &= \tau(x_3, y_3).\end{aligned}$$

 There is a 3-fold branched covering $$\begin{aligned}\pi: \E&\to \Cc\!P^1\\ (x, y)&\mapsto y\\\infty&\mapsto\infty\end{aligned}$$ with branch points $\pm\sqrt{c}$, $\infty$. Identify the orbit space $\E/\langle \tau \rangle$ with $\Cc\!P^1$ by the fibers of the projection $\pi$. Hence, write down the resultong 3-valued group $\E_{\langle \tau \rangle}$ on $\Cc\!P^1$ with neutral element $\infty$: $$\begin{aligned}y_1\ast y_2 &= [\pi((x_1, y_1)\oplus(\epsilon^k x_2, y_2))\mid k = 0, 1, 2]\\ &= \left[ m_k\left(2\sqrt[3]{y_1^2 - c} + \epsilon^k\sqrt[3]{y_2^2 - c} - m_k^2\right) - y_1 \right],\end{aligned}$$ where for each $k = 0, 1, 2$ we denote $$m_k = \frac{y_1 - y_2}{\sqrt[3]{y_1^2 - c} - \epsilon^k\sqrt[3]{y_2^2 -c}}.$$

Direct computations by Vieta's formulas show that the elements in the expression for $y_1\ast y_2$ are the roots of the polynomial $(xyz)^3P_{3}(1/z; 1/x, 1/y)$, where $$P_{3}(z; -x, -y) = \sigma_1^3 - 27 \sigma_3 + 18 c \sigma_1^2 \sigma_3 - 54 c \sigma_2 \sigma_3 - 27 c^2 \sigma_2^2 \sigma_3 + 81 c^2 \sigma_1 \sigma_3^2.$$ It is easy to see that any automorphism of $\Cc\!P^1$ maps an algebraic group to some isomorphic algebraic group. Hence, the Möbius transform $x\mapsto 1/x$, $y\mapsto 1/y$, $z\mapsto 1/z$ maps the group $\E_{\langle \tau \rangle}$ to an algebraic 2-valued group with neutral element 0. Restricting to $\Cc$ we get the desired group.

\end{proof}

\section{Kronecker Sums and Frobenius Companion Matrices}\label{Kronecker_sums_and_companions}

Let us recall the following definition, important for what follows (see, for example, \cite[Definition 4.4.4]{Horn}).

\begin{defi}\label{Kronecker_sum}
Let $A$ and $B$ be arbitrary square matrices over the field $\Cc$ of orders $m$ and $n$, respectively. The {\it Kronecker sum} $A\boxplus B$ of the matrices $A$ and $B$ is the $mn\times mn$ matrix
$$A\boxplus B = A\otimes_{\Cc} I_n + I_m\otimes_{\Cc} B.$$
\end{defi}

Note that the concept of Kronecker sum is not as widely represented in standard linear algebra courses as the Kronecker product (tensor product of matrices). Nevertheless, this concept has many applications in various fundamental and applied fields of science, including differential equations, quantum computing, machine learning, robotics, and others. For example, Kronecker sums yield a criterion for the existence and uniqueness of a solution to the matrix equation $AX + XB = C$ \cite[Theorem 4.4.6]{Horn}: the key idea is that this equation can be rewritten as $(B^{T}\boxplus A)\vecc X = \vecc C$, where $\vecc(\cdot)$ denotes the vectorization operation. Such an equation arises in the study of the stability of linear differential systems $\dot{x} = Ax$ with constant coefficients for global asymptotic stability in the sense of Lyapunov \cite[Chapter XV, Section 5, Theorem 3, Theorem 3']{Gantmacher}. 

The following fact \cite{Stephanos} about polynomials composed of tensor products is important for the present work. A more modern exposition can be found in book \cite{Lancaster}.

\begin{prop}[(Section 12.2, Theorem 1 in \cite{Lancaster})]\label{Stephanos}
Let $A\in \Cc^{m\times m}$ and $B\in \Cc^{n\times n}$ be complex matrices, and let $p(x, y) = \sum\limits_{j, k = 0}^{\ell}c_{jk}x^j y^k$ be a complex polynomial. Consider the matrix
$$p(A, B) = \sum\limits_{j, k = 0}^\ell c_{jk}A^j\otimes B^k.$$ 
Let $\lambda_1, \dots, \lambda_m$ be the eigenvalues of the matrix $A$, and $\mu_1, \dots, \mu_n$ the eigenvalues of $B$. Then the eigenvalues of the matrix $p(A, B)$ are the $mn$ numbers $p(\lambda_r, \mu_s)$, where $r = 1, \dots, m$ and $s = 1, \dots, n$.  
\end{prop}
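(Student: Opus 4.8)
The plan is to reduce to the triangular case via simultaneous Schur triangularization and then read off the eigenvalues from the diagonal of the resulting (block-)triangular matrix. First I would invoke Schur's theorem to write $A = U T_A U^{*}$ and $B = V T_B V^{*}$ with $U \in \Cc^{m\times m}$, $V \in \Cc^{n\times n}$ unitary and $T_A$, $T_B$ upper triangular, whose diagonal entries are exactly the eigenvalues $\lambda_1,\dots,\lambda_m$ and $\mu_1,\dots,\mu_n$ listed with multiplicity. Using the mixed-product property $(XY)\otimes(ZW) = (X\otimes Z)(Y\otimes W)$ and the fact that $U\otimes V$ is unitary, one gets for each monomial
\[
A^{j}\otimes B^{k} = (U T_A^{j} U^{*})\otimes (V T_B^{k} V^{*}) = (U\otimes V)\,(T_A^{j}\otimes T_B^{k})\,(U\otimes V)^{*},
\]
so that $p(A,B) = (U\otimes V)\,p(T_A,T_B)\,(U\otimes V)^{*}$. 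Hence $p(A,B)$ is unitarily similar to $p(T_A,T_B)$, and it suffices to compute the eigenvalues of the latter.

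Next I would observe that powers of an upper triangular matrix are upper triangular, with diagonal entries the corresponding powers of the original diagonal entries; the same is true of $\Cc$-linear combinations of tensor products of upper triangular matrices. Therefore $p(T_A,T_B) = \sum_{j,k} c_{jk} T_A^{j}\otimes T_B^{k}$ is upper triangular (with respect to the lexicographic ordering of the index pairs $(r,s)$), and its $\big((r,s),(r,s)\big)$ diagonal entry equals $\sum_{j,k} c_{jk}\lambda_r^{j}\mu_s^{k} = p(\lambda_r,\mu_s)$. The eigenvalues of an upper triangular matrix are its diagonal entries, counted with algebraic multiplicity, which gives precisely the $mn$ numbers $p(\lambda_r,\mu_s)$ for $r = 1,\dots,m$, $s = 1,\dots,n$. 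Since eigenvalues are preserved under the similarity above, the same holds for $p(A,B)$.

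There is essentially no hard part here; the only point demanding a little care is bookkeeping. One must check that the diagonal of $T_A^{j}\otimes T_B^{k}$ really is $(\lambda_r^{j}\mu_s^{k})_{(r,s)}$ in a consistent ordering of the $mn$ index pairs, and that this ordering simultaneously makes every $T_A^{j}\otimes T_B^{k}$ upper triangular — this is immediate once one fixes the Kronecker-product convention under which $X\otimes Y$ is upper triangular whenever $X$ and $Y$ are. I would also note that the multiplicities come out correctly: the claim is about the multiset of eigenvalues, and since $p(A,B)$ is similar to an explicit triangular matrix the characteristic polynomial is literally $\prod_{r,s}\big(x - p(\lambda_r,\mu_s)\big)$. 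Specializing $p(x,y) = x + y$ recovers the Kronecker sum $A\boxplus B$ and its eigenvalues $\lambda_r + \mu_s$, which is the case used repeatedly in the sequel.
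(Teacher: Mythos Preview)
Your proof is correct and follows essentially the same route as the paper, which simply remarks that the result follows from considering the Jordan normal forms of $A$ and $B$. Your use of Schur triangularization in place of Jordan form is a cosmetic variation---both reduce to upper triangular matrices whose diagonals carry the eigenvalues, and from there the argument is identical.
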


The proof of Proposition \ref{Stephanos} follows from considering the Jordan normal forms of the matrices $A$ and $B$.

For $p(x, y) = x + y$, we obtain:

\begin{prop}[(Section 12.2, Corollary 2 in \cite{Lancaster})]\label{eigen_sum}
Let $A$ and $B$ be arbitrary square matrices over the field $\Cc$ of orders $m$ and $n$, respectively. Let $\lambda_1, \dots, \lambda_m$ and $\mu_1, \dots, \mu_n$ be the eigenvalues with multiplicities for the matrices $A$ and $B$. Then the $mn$-multiset $[\lambda_j + \mu_k]$ of all possible sums is the multiset of eigenvalues of the matrix $A\boxplus B$.  
\end{prop}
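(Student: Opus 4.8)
The plan is to deduce this immediately from Proposition \ref{Stephanos} by taking the polynomial $p(x,y) = x + y$, i.e. $\ell = 1$ with $c_{10} = c_{01} = 1$ and all other $c_{jk} = 0$. For this choice one has $p(A,B) = A^1 \otimes B^0 + A^0 \otimes B^1 = A \otimes I_n + I_m \otimes B$, which is exactly the Kronecker sum $A \boxplus B$ of Definition \ref{Kronecker_sum}. Here I use the convention $A^0 = I_m$, $B^0 = I_n$, so that the degenerate terms in the double sum reproduce the two summands defining $A \boxplus B$; this identification is the only thing that needs to be spelled out, and it is purely formal.

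Once this is observed, Proposition \ref{Stephanos} says that the eigenvalues of $p(A,B) = A \boxplus B$ are precisely the $mn$ numbers $p(\lambda_r, \mu_s) = \lambda_r + \mu_s$ for $r = 1,\dots,m$ and $s = 1,\dots,n$, counted as the theorem counts them. Since Proposition \ref{Stephanos} is obtained (as noted in the excerpt) by passing to the Jordan normal forms of $A$ and $B$, the count there is with algebraic multiplicities: each pair $(r,s)$ contributes one eigenvalue, so the full list $[\lambda_r + \mu_s \mid r = 1,\dots,m,\ s = 1,\dots,n]$ is the multiset of eigenvalues of $A \boxplus B$ with multiplicities. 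This is the assertion of Proposition \ref{eigen_sum}.

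I do not expect any genuine obstacle here; the statement is a direct specialization. The only point requiring a word of care is the bookkeeping of multiplicities: one should check that the multiplicity count in Proposition \ref{Stephanos} is the naive one (one eigenvalue $p(\lambda_r,\mu_s)$ per index pair $(r,s)$, with the $\lambda_r$ and $\mu_s$ themselves listed with multiplicity), rather than a count that collapses coincidences among the values $p(\lambda_r,\mu_s)$. This is exactly what the Jordan-form proof of Proposition \ref{Stephanos} delivers, since for triangular $A$ and $B$ the matrix $A \otimes I_n + I_m \otimes B$ is triangular with diagonal entries $\lambda_r + \mu_s$ ranging over all pairs $(r,s)$. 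Hence the multiset statement of Proposition \ref{eigen_sum} follows without further work.
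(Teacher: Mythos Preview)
Your proposal is correct and is essentially identical to the paper's approach: the paper simply states ``For $p(x,y) = x+y$, we obtain'' Proposition~\ref{eigen_sum} as an immediate corollary of Proposition~\ref{Stephanos}. Your additional remarks on the multiplicity bookkeeping via the triangular (Jordan) form are a helpful elaboration but not a different argument.
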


We recall the following.

\begin{defi}\label{companion_matrix}
Let $f(t) = t^m + a_{m-1}t^{m-1} + \dots + a_0\in \Cc[t]$. The {\it Frobenius companion matrix} $F(a_{m-1}, ..., a_0)$ of the polynomial $f(t)$ is the matrix
\begin{equation}\label{Frobenius_matrix}
\left(\begin{array}{ccccc}
0 & 0 & 0 & 0 & -a_{0}\\
1 & 0 & 0 & 0 & -a_{1}\\
0 & 1 & 0 & 0 & -a_{2}\\
0 & 0 & \ddots & \vdots & \vdots\\
0 & 0 & 0 & 1 & -a_{m-1}
\end{array}\right).
\end{equation}
Similarly, the {\it block Frobenius companion matrix} is defined for a polynomial $f(t) = t^m + A_{m-1}t^{m-1} + \dots + A_0\in \Mat_{n}(\Cc)[t]$.
\end{defi}

From Proposition \ref{eigen_sum}, we immediately obtain:

\begin{prop}\label{coroll1}
Let $f(t) = t^m + a_{m-1}t^{m-1} + \dots + a_0$ and $g(t) = t^n + b_{n-1}t^{n-1} + \dots + b_0$ be polynomials in the ring $\Cc[t]$ with multisets of roots $[\lambda_j]$ and $[\mu_k]$, respectively. Then the characteristic polynomial
$$\chi(F(a_{n-1}, \dots, a_0)\boxplus F(b_{n-1}, \dots, b_0); t) = \det(t\cdot I - F(a_{n-1}, \dots, a_0)\boxplus F(b_{n-1}, \dots, b_0))$$
in the variable $t$ of the Kronecker sum of their Frobenius companion matrices has as its roots all possible $mn$ sums of the form $\lambda_j + \mu_k$.
\end{prop}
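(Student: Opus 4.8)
The plan is to deduce the statement directly from Proposition \ref{eigen_sum}, using only the classical identification of the spectrum of a Frobenius companion matrix. The first step is to recall that the characteristic polynomial of $F(a_{m-1},\dots,a_0)$ equals $f(t)$ itself: expanding $\det(t\cdot I - F(a_{m-1},\dots,a_0))$ along the last column (or the first row) produces exactly the recursion satisfied by the coefficients of $f$, so $\chi(F(a_{m-1},\dots,a_0);t) = f(t)$. Hence the multiset of eigenvalues of $F(a_{m-1},\dots,a_0)$, counted with algebraic multiplicity, is precisely $[\lambda_j]$, and the same argument gives that the eigenvalue multiset of $F(b_{n-1},\dots,b_0)$ is $[\mu_k]$.

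The second step is to apply Proposition \ref{eigen_sum} to the pair $A = F(a_{m-1},\dots,a_0)$ and $B = F(b_{n-1},\dots,b_0)$. That proposition asserts that the $mn$-element multiset of eigenvalues of $A\boxplus B$ is exactly $[\lambda_j + \mu_k]$ with $j$ ranging over $1,\dots,m$ and $k$ over $1,\dots,n$. Since the roots of the characteristic polynomial $\chi(A\boxplus B;t) = \det(t\cdot I - A\boxplus B)$, counted with multiplicity, are by definition the eigenvalues of $A\boxplus B$, the claimed description of the roots follows at once.

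I do not anticipate any genuine obstacle: the statement is a formal corollary of Proposition \ref{eigen_sum}. The only point meriting a word of care is the bookkeeping of multiplicities. Proposition \ref{eigen_sum} is phrased at the level of multisets, so one must be sure that each companion matrix contributes each root of $f$, respectively $g$, with the correct algebraic multiplicity; this is guaranteed precisely by the identity $\chi(F(a_{m-1},\dots,a_0);t) = f(t)$ established in the first step. In particular no appeal to the Jordan form or to diagonalizability of the companion matrices is required here, since the non-semisimple case is already absorbed into Proposition \ref{eigen_sum}.
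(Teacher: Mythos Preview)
Your proposal is correct and matches the paper's approach exactly: the paper introduces this proposition with the phrase ``From Proposition \ref{eigen_sum}, we immediately obtain,'' offering no further argument, and your two steps (identifying the spectra of the companion matrices with the root multisets of $f$ and $g$, then invoking Proposition \ref{eigen_sum}) are precisely what that sentence unpacks.
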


The operation of substituting a polynomial into another polynomial naturally arises in problems related to the Yang–Baxter equation \cite{Krichever81}, multivalued discrete dynamics \cite{Veselov91}, and homomorphisms of $n$-valued groups. We present an expression for the composition $g(f)$ of polynomials $f$ and $g$ in terms of their Frobenius companion matrices $C_f$ and $C_g$.

\begin{prop}[\cite{Carrillo}]\label{C_g_circ_C_f}
Let $f(t) = t^m + a_{m-1}t^{m-1} + \dots + a_0$ and $g(t) = t^n + b_{n-1}t^{n-1} + \dots + b_0$ be polynomials over the ring $\Cc[t]$ with multisets of roots $[\lambda_j]$ and $[\mu_k]$, respectively. Then the following equality holds:
$$g(f)(t) = \det(t\cdot I_{mn} - C_g\circ C_f),$$
where
$$\begin{aligned}
C_g\circ C_f = \left(\begin{array}{ccccc}
0 & 0 & \cdots & 0 & -a_{0}I_{n}+C_{g}\\
I_{n} & 0 & \cdots & 0 & -a_{1}I_{n}\\
0 & I_{n} & \cdots & 0 & -a_{2}I_{n}\\
\vdots & \vdots & \ddots & \vdots & \vdots\\
0 & 0 & \cdots & I_{n} & -a_{m-1}I_{n}
\end{array}\right)
\end{aligned}$$
is the block Frobenius companion matrix of the matrix polynomial
$$q(t) = f(t)I_n - C_g.$$
\end{prop}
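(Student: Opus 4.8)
The plan is to read off the determinant in two elementary moves, using nothing beyond the fact that the characteristic polynomial of a companion matrix is the polynomial it is built from. First I would note that, directly from the block structure displayed in the statement, $C_g\circ C_f$ is the block Frobenius companion matrix (in the sense of Definition \ref{companion_matrix}) of the monic matrix polynomial
\[
q(t) = f(t)\,I_n - C_g = t^m I_n + a_{m-1}t^{m-1}I_n + \dots + a_1 t\, I_n + \bigl(a_0 I_n - C_g\bigr),
\]
a polynomial of degree $m$ in $t$ with coefficients in $\Mat_n(\Cc)$: the $j$-th block of its last block column is $-a_{j-1}I_n$ for $j\geqslant 2$ and $-a_0 I_n + C_g$ for $j=1$, exactly as written, and the subdiagonal blocks are $I_n$.

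The key step is the identity $\det(t\,I_{mn} - C_g\circ C_f) = \det q(t)$, i.e. that the characteristic polynomial of a block companion matrix equals the determinant of the underlying matrix polynomial. I would prove this by block row operations on $t\,I_{mn}-C_g\circ C_f$: adding $t^{\,i-1}$ times the $i$-th block row to the first block row for $i=2,\dots,m$ turns the first block row into $(0,\dots,0,q(t))$, since the scalar powers of $t$ commute with every block and the subdiagonal $-I_n$ blocks telescope (each step cancels the entry just produced and pushes a higher power of $t$ one block to the right, with the last column accumulating $q(t)$). Expanding the resulting matrix along its first block row leaves $\det q(t)$ times the determinant of a block upper-triangular matrix with $-I_n$ on the diagonal, and the accumulated signs (the block-cofactor sign together with $(\det(-I_n))^{m-1}$) cancel to $+1$, just as in the scalar case $n=1$, where this reduces to the familiar $\det(tI-F)=f(t)$. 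Equivalently, one invokes the block-matrix determinant formula of \cite[Lemma 2.1]{Melman} directly, which is the route flagged in the Introduction. This bookkeeping with block operations and signs is essentially the only content of the argument, and it is the step I expect to be the main obstacle; everything around it is formal.

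Finally I would conclude as follows. Since $C_g$ is the Frobenius companion matrix of $g$, its characteristic polynomial is $\det(s\,I_n - C_g)=g(s)$, an identity of polynomials in the scalar indeterminate $s$; substituting $s=f(t)$ gives $\det q(t)=\det\bigl(f(t)I_n-C_g\bigr)=g\bigl(f(t)\bigr)=g(f)(t)$. Combining this with the key step yields $g(f)(t)=\det(t\,I_{mn}-C_g\circ C_f)$, and the identification of $C_g\circ C_f$ as a block Frobenius companion matrix of $q(t)=f(t)I_n-C_g$ was recorded in the first step, which completes the proof.
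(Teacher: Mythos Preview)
Your proposal is correct and follows essentially the same route as the paper: reduce $\det(tI_{mn}-C_g\circ C_f)$ to $\det(f(t)I_n-C_g)$ and then read this off as $g(f(t))$ from the characteristic polynomial of $C_g$. The only cosmetic difference is that the paper first reflects the matrix about the anti-diagonal and then applies Melman's lemma directly, whereas you carry out the equivalent block row reduction by hand (and in fact name Melman's lemma as the alternative); your final identification $\det(f(t)I_n-C_g)=g(f(t))$ via substituting $s=f(t)$ into $\det(sI_n-C_g)=g(s)$ is slightly cleaner than the paper's appeal to matching roots of monic polynomials.
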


We present a direct and short proof. We will need the following lemma.

\begin{lemma}[\cite{Melman}]\label{lemma_blocks}
Let $M_i$, $N_j$ be matrices of size $n\times n$ over the field $\Cc$. Then the determinant of the block matrix
$$\left(\begin{array}{cccccc}
M_{1} & M_{2} & M_{3} & \cdots & \cdots & M_{m}\\
-I & N_{1} & 0 & \cdots & \cdots & 0\\
0 & -I & N_{2} & \cdots & \cdots & 0\\
0 & 0 & -I & \ddots & \cdots & \vdots\\
\vdots & \vdots & \vdots & \ddots & \ddots & 0\\
0 & 0 & 0 & \cdots & -I & N_{m-1}
\end{array}\right)$$
is equal to the determinant of the matrix
$$M_{1}\prod\limits_{j=1}^{m-1}N_{j}+M_{2}\prod\limits_{j=2}^{m-1}N_{j}+\dots+M_{m-1}N_{m-1}+M_{m}.$$
\end{lemma}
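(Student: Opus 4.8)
The plan is to reach the stated identity by a sequence of elementary block column and row operations on the displayed $mn\times mn$ matrix $T$, none of which changes the determinant, until $T$ takes a block form whose determinant is transparent. Recall the layout of $T$: its $(1,k)$-block is $M_k$ for $k=1,\dots,m$; its $(k,k-1)$-block is $-I$ and its $(k,k)$-block is $N_{k-1}$ for $k=2,\dots,m$; every other block is $0$.

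First I clear the superdiagonal blocks. Going through the columns from left to right, for $k=1,\dots,m-1$ I add to block column $k+1$ the block column $k$ right-multiplied by $N_k$. The $-I$ in block position $(k+1,k)$ cancels the $N_k$ in position $(k+1,k+1)$; the $(1,k+1)$-block turns into $M_{k+1}$ plus (the current $(1,k)$-block) times $N_k$; and, since by the time column $k+1$ is handled block column $k$ has only its $(1,k)$-block and its $(k+1,k)$-block $=-I$ left nonzero — the $(k,k)$-block having been killed when column $k$ was processed — no previously cleared column is disturbed. Unrolling, the $(1,k)$-block after this stage equals $M_k+M_{k-1}N_{k-1}+\dots+M_1N_1\cdots N_{k-1}$, so the last block of the first row is exactly $\Phi:=M_1\prod_{j=1}^{m-1}N_j+M_2\prod_{j=2}^{m-1}N_j+\dots+M_{m-1}N_{m-1}+M_m$. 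Next, each block row $k$ with $2\le k\le m$ now contains only the block $-I$ in column $k-1$, so for $k=2,\dots,m$ I add to block row $1$ the block row $k$ left-multiplied by the current $(1,k-1)$-block; this annihilates the $(1,k-1)$-block and changes nothing else. The first block row collapses to $(0,\dots,0,\Phi)$, and $T$ has become the matrix $P$ whose only nonzero blocks are $P_{1,m}=\Phi$ and $P_{k,k-1}=-I$ for $k=2,\dots,m$.

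It remains to compute $\det P$. Rearranging the $m$ block rows of $P$ by the $m$-cycle that sends old block row $1$ to new position $m$ and old block row $k$ to new position $k-1$ for $k\ge 2$ yields the block-diagonal matrix $\diag(-I,\dots,-I,\Phi)$ with $m-1$ copies of $-I$. This rearrangement is implemented by the block permutation matrix of an $m$-cycle, whose determinant is $(-1)^{(m-1)n}$; hence $\det P=(-1)^{(m-1)n}\det\diag(-I,\dots,-I,\Phi)=(-1)^{(m-1)n}(-1)^{(m-1)n}\det\Phi=\det\Phi$. Since the column and row operations above preserve the determinant, $\det T=\det\Phi$, which is the assertion of the lemma.

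The only part demanding any care is the sign bookkeeping in the last step, together with the invariant noted above that clearing one column never resurrects an earlier one; both are elementary, and here all stray signs square away to $1$. An alternative that sidesteps signs is induction on $m$ via the Schur complement relative to the bottom-right block $N_{m-1}$, which gives $\det T=\det(N_{m-1})\det(T')$ with $T'$ the $(m-1)$-block analogue having $M_{m-1}$ replaced by $M_{m-1}+M_mN_{m-1}^{-1}$; the inductive formula then collapses after multiplying through by $N_{m-1}$, and one extends from invertible $N_{m-1}$ to the general case by the standard polynomial-identity argument.
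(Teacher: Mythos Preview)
Your proof is correct. The block column sweep followed by the row clears is carried out cleanly, the invariant that column $k$ retains only its $(1,k)$-block and its $(k+1,k)=-I$ block is right, and the sign count at the end is accurate: the block cyclic permutation contributes $(-1)^{(m-1)n}$ via $\det(\pi\otimes I_n)=\det(\pi)^n$, and the $(m-1)$ copies of $-I_n$ contribute another $(-1)^{(m-1)n}$, so everything cancels. The Schur-complement alternative you sketch also works as stated.

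There is nothing to compare against in the paper: Lemma~\ref{lemma_blocks} is quoted from \cite{Melman} without proof and used as a black box in the proof of Proposition~\ref{C_g_circ_C_f}. Your argument is a perfectly good self-contained proof of it; the column-operation route you take is essentially the standard one and is in the spirit of how such block-companion determinants are usually handled.
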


\begin{proof}[Proof of Proposition \ref{C_g_circ_C_f}]
Note that since the determinant of a matrix is invariant under reflection with respect to the anti-diagonal, we have the equality
$$h(t) := \det(tI_n - C_g\circ C_f) = \det\left(\begin{array}{ccccc}
t+a_{m-1}I_{n} & a_{2}I_{n} & \cdots & a_{1}I_{n} & a_{0}I_{n}-C_{g}\\
-I_{n} & tI_n & \cdots & 0 & 0\\
0 & -I_{n} & \cdots & tI_n & 0\\
\vdots & \vdots & \ddots & \vdots & \vdots\\
0 & 0 & \cdots & -I_{n} & tI_n
\end{array}\right).$$
By Lemma \ref{lemma_blocks}, we obtain:
\begin{equation}\label{g(f)_composition}
h(t) = \det(f(t)I_n - C_g).
\end{equation}
By a property of Frobenius companion matrices, the monic polynomial $h(t)$ has the same roots as the monic polynomial $g(f(t))$. Therefore, they are equal.
\end{proof}

\section{Groups $\G_n$ and Kronecker Sums}\label{Kronecker}\label{G_n_and_Kronecker}

In this section, we present in Proposition \ref{prop10} a new approach to constructing the $n$-valued group $\G_n$ based on the concept of Kronecker sums of matrices. Theorem \ref{thm1} provides homogeneous symmetric polynomials $p_n(z; x_1, \dots, x_m)$ with integer coefficients that describe the iterations of $n$-valued multiplication in the group $\G_n$.

From the general coset construction in Proposition \ref{coset_theorem}, it follows that for each finite subgroup $G$ of order $n$ of the group of automorphisms of the additive group $(\R^n, +)$ of real numbers, there is an associated $n$-valued group $\R^n(G)$. If no continuity condition is imposed on the automorphisms, the structure of the group $G$ can be quite exotic: for example, $G$ can be defined using Hamel bases. If we consider only continuous automorphisms of $(\R^n, +)$, then $G$ is a finite subgroup of the group $\GL(n, \R)$.

In this work, we are primarily interested in $n$-valued groups on the additive group $(\Cc, +)$ of complex numbers defined by holomorphic functions in some domain $U\subset \Cc$. In \cite{Buchstaber75}, a classification was obtained of all 2-valued groups (not necessarily coset groups) whose multiplication law is given by power series with coefficients in any algebra over a field of characteristic $0$.

Recall that the focus of this work is the family of polynomials $p_n(z; x, y)$ introduced in \cite{Buchstaber_Novikov}:
\begin{equation}\label{p_n}
p_n(z; x,y) = \prod\limits_{w\in \inv(x)\ast \inv(y)} (z - w),
\end{equation}
where $w$ runs over the elements of the $n$-multiset $\inv(x)\ast \inv(y)$ in the sense of the operation $\ast$ on the $n$-valued group $\G_n$ (see Example \ref{G_n_defi}).

\begin{prop}\label{prop10}
For each positive integer number $n$, the following statements hold:

\begin{enumerate}[(i)]

\item The polynomial $p_n(z; x, y)$ from (\ref{p_n}) is symmetric in the variables $x$, $y$, and $z$.

\item For all complex $x$, $y$, and $z$, we have the equality
$$p_n(z^n; x, y) = \chi(F(\underset{n}{\underbrace{0,\dots,0,(-1)^{n+1}x}})\boxplus F(\underset{n}{\underbrace{0,\dots,0,(-1)^{n+1}y}}); z).$$

\item The polynomial $p_n(z; x, y)$ is a homogeneous polynomial in the variables $x$, $y$, and $z$ of degree $n$ with integer coefficients.
\end{enumerate}

\end{prop}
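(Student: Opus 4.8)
The plan is to establish the three parts essentially in the order (ii), then (i), then (iii), since part (ii) gives the cleanest concrete handle on $p_n$ and the other two follow from it together with elementary observations.

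For part (ii), I would start from the defining formula $(\ref{p_n})$: since $\inv(x) = (-1)^n x$, the multiset $\inv(x)\ast\inv(y)$ consists of the $n$ numbers $\bigl(\sqrt[n]{(-1)^n x} + \epsilon^r \sqrt[n]{(-1)^n y}\bigr)^n$ for $r = 1,\dots,n$. I would choose the branch so that $\sqrt[n]{(-1)^n x} = (-1)\sqrt[n]{x}$ after absorbing the sign appropriately — more carefully, one writes $\sqrt[n]{(-1)^{n+1}x}$ for the quantities appearing as the constant terms of $t^n - (-1)^{n+1}x$, whose roots are $\zeta\,\sqrt[n]{(-1)^{n+1}x}$ over all $n$th roots of unity $\zeta$. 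The key point is Proposition~\ref{coroll1}: the characteristic polynomial in $z$ of $F(0,\dots,0,(-1)^{n+1}x)\boxplus F(0,\dots,0,(-1)^{n+1}y)$ has as its roots all $n^2$ sums $\alpha_j + \beta_k$, where $\{\alpha_j\}$ are the $n$ roots of $t^n = (-1)^{n+1}x$ and $\{\beta_k\}$ the $n$ roots of $t^n = (-1)^{n+1}y$. Writing $\alpha_j = \epsilon^j \alpha$ and $\beta_k = \epsilon^k\beta$ with $\alpha^n = (-1)^{n+1}x$, $\beta^n = (-1)^{n+1}y$, the multiset of sums is $\{\epsilon^j(\alpha + \epsilon^{k-j}\beta)\}$, and raising to the $n$th power, the multiset $\{(\alpha+\epsilon^r\beta)^n : r\}$ occurs, each value with multiplicity $n$. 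Hence the characteristic polynomial factors (after the substitution $z \mapsto z^n$ absorbs the $\epsilon^{jn}=1$) as $\prod_{r=1}^n\bigl(z^n - (\alpha+\epsilon^r\beta)^n\bigr)$, and it remains to check that $(\alpha+\epsilon^r\beta)^n$ ranges over exactly the elements of $\inv(x)\ast\inv(y)$; since $\alpha^n = (-1)^{n+1}x = -\inv(x)\cdot(-1)^{2n}$... I should be careful with the sign bookkeeping here — the cleanest route is to verify directly that $\alpha = -\sqrt[n]{\inv(x)}$ works for a suitable branch, so that $(\alpha+\epsilon^r\beta)^n = (-1)^n(\sqrt[n]{\inv(x)}+\epsilon^r\sqrt[n]{\inv(y)})^n$ and the sign $(-1)^n$ cancels against the analogous factor, giving precisely the elements $w \in \inv(x)\ast\inv(y)$. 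This matching of signs and branches is the one genuinely fiddly step and the main obstacle.

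For part (i), symmetry in $x$ and $y$ is immediate from the symmetry of the Kronecker sum construction (or directly from $(\ref{operation2})$, since $\epsilon^r$ ranges over all nontrivial... all $n$th roots of unity). Symmetry in $z$ versus $x$ (equivalently, that the polynomial is invariant under permuting all three variables) is the content that genuinely uses the $n$-valued group structure: I would invoke the interpretation already recorded in Example~\ref{G(p_n)} and the discussion around $(\ref{p_n})$, namely that $p_n(z;x,y) = \prod_{w \in \inv(x)\ast\inv(y)}(z-w)$ and that $\G_n$ is a symmetric $n$-algebraic $n$-valued group in the sense of Definition~\ref{symmetric_n-algebraic_n-valued}; the symmetry of $p_n(z;(-1)^nx,(-1)^ny)$ in $x,y,z$ then rephrases as symmetry of $p_n(z;x,y)$ after the substitution. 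Alternatively, and more self-containedly, one checks $z \in p_n(\,\cdot\,; x, y)$-vanishing-set iff $(\sqrt[n]{z} - \sqrt[n]{x})^n \in $ appropriate multiset built from $x$ — i.e. one unwinds $w = (\sqrt[n]{\inv x} + \epsilon^r\sqrt[n]{\inv y})^n$, takes $n$th roots, and rearranges $\sqrt[n]{x} + \sqrt[n]{y} + (-\epsilon^{-r})\sqrt[n]{w}$-type relations to see the three variables enter on equal footing. I would present the short argument via the symmetric $n$-algebraic structure and relegate the direct root computation to a remark.

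For part (iii), homogeneity of degree $n$ is a scaling argument: replacing $(x,y,z)$ by $(\lambda^n x, \lambda^n y, \lambda^n z)$ scales each $\sqrt[n]{\cdot}$ by $\lambda$, hence each $w \in \inv(x)\ast\inv(y)$ by $\lambda^n$, hence $p_n(\lambda^n z; \lambda^n x, \lambda^n y) = \lambda^{n^2} p_n(z;x,y)$, which says $p_n$ is homogeneous of degree $n$ in the variables when each is assigned weight $1$ (or weight $n$ if one tracks the substitution $z \mapsto z^n$ as in part (ii)); combined with $\deg_z = n$ from the product of $n$ linear factors, this pins down the total degree. Integrality: the elementary symmetric functions of the multiset $\{(\sqrt[n]{\inv x}+\epsilon^r\sqrt[n]{\inv y})^n : r = 1,\dots,n\}$ are symmetric under the full Galois action permuting the branches of $\sqrt[n]{x}$ and $\sqrt[n]{y}$ independently — equivalently, they are the coefficients of the characteristic polynomial from part (ii), which is $\det(zI - M)$ for an integer matrix $M = F(0,\dots,0,\pm x)\boxplus F(0,\dots,0,\pm y)$; expanding this determinant yields polynomials in $x,y$ with integer coefficients, and the substitution $z^n \mapsto z$ is harmless because, as shown in (ii), the characteristic polynomial is genuinely a polynomial in $z^n$. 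I expect parts (i) and (iii) to be routine once (ii) is in hand; the real work — and the step to write out carefully — is the branch-and-sign matching in (ii).
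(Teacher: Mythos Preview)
Your treatment of (ii) and (iii) is essentially the paper's, and correct. One concrete slip: the eigenvalues of $F(0,\ldots,0,(-1)^{n+1}x)$ are the roots of $t^n + (-1)^{n+1}x = 0$, i.e.\ they satisfy $t^n = (-1)^n x$, not $(-1)^{n+1}x$ as you wrote. You rightly flagged the sign bookkeeping as the delicate point, and with this correction your matching to $\inv(x)\ast\inv(y)$ goes through cleanly.

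The genuine gap is in part (i). Your primary route --- invoking Example~\ref{G(p_n)} and the fact that $\G_n$ is a symmetric $n$-algebraic $n$-valued group --- is circular: by Definition~\ref{symmetric_n-algebraic_n-valued}, ``symmetric'' means precisely that the defining polynomial is symmetric in all three variables, and Example~\ref{G(p_n)} asserts this without proof. Proposition~\ref{prop10}(i) is where the symmetry is actually established, so you cannot cite the example to prove it.

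The paper's organizing device, which your proposal is missing, is the $n^2$-factor expansion
\[
p_n(z;x,y) \;=\; \prod_{r,s=1}^n \bigl(\sqrt[n]{z} + \epsilon^r\sqrt[n]{x} + \epsilon^s\sqrt[n]{y}\bigr),
\]
obtained by splitting each factor $(z-w)$ in $(\ref{p_n})$ into $n$ linear pieces in $\sqrt[n]{z}$. From this formula the three-variable symmetry is proved directly: the paper checks $p_n(x;z,y)=p_n(z;x,y)$ by reindexing the product, using that $-\epsilon^s\sqrt[n]{(-1)^n x}$ runs over all $n$th roots of $x$ as $s$ varies. The same formula makes (ii) immediate (the roots in $\sqrt[n]{z}$ are visibly all sums $\alpha_j+\beta_k$), and (iii) then follows as you argue. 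Your ``alternative'' root-rearrangement sketch for (i) is gesturing at exactly this identity; writing it down explicitly is what turns the sketch into a proof and dissolves the sign difficulties you anticipated in (ii).
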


\begin{proof}
Expanding each factor in expression (\ref{p_n}) into $n$ linear terms, we obtain
\begin{equation}\label{eq1}
p_n(z; x, y) = \prod\limits_{r, s = 1}(\sqrt[n]{z} + \epsilon^r\sqrt[n]{x} + \epsilon^s\sqrt[n]{y}),
\end{equation}
where $\varepsilon$ is an (arbitrary) primitive $n$-th root of unity.

\begin{enumerate}[(i)]

\item Recall that we fix the root branch such that $\sqrt[n]{1} = 1$. From the equality (\ref{eq1}), it follows that the expression $p_n(z; x, y)$ is symmetric. Indeed,
\begin{align*}
p_n(x; z, y) &= \prod\limits_{r,s = 1}^n (\sqrt[n]{x} - \epsilon^r\sqrt[n]{(-1)^nz} - \epsilon^s\sqrt[n]{(-1)^ny}) \\
&= \prod\limits_{r,s = 1}^n (\sqrt[n]{x} + \epsilon^r\sqrt[n]{z} + \epsilon^s\sqrt[n]{y}) \\
&= \left(\epsilon \dots \epsilon^{n-1} \right)^n\prod\limits_{r, s =1}^n \epsilon^{-r}(\sqrt[n]{z} + \epsilon^{-r}\sqrt[n]{z} + \epsilon^{s+r}\sqrt[n]{y})\\
&= \prod\limits_{r,s = 1}^n (\sqrt[n]{z} + \epsilon^r\sqrt[n]{x} + \epsilon^s\sqrt[n]{y}) \\
&= p_n(z; x, y),
\end{align*}
where the second equality follows from the fact that $(-\epsilon^s\sqrt[n]{(-1)^nx})^n = x$, i.e., for any $n$, the expression $-\epsilon^s\sqrt[n]{(-1)^nx}$ runs over all roots of the polynomial $t^n - x$ as $s = 1, \dots, n$.

\item Note that from equation (\ref{eq1}), it also follows that $p_n(z; x, y)$ has as its roots all sums of the form $x_j + y_k$, where $\{x_j\}$ and $\{y_k\}$ are the sets of roots of the polynomials $z^n + (-1)^{n+1}x$ and $z^n + (-1)^{n+1}y$. Then from Proposition \ref{coroll1}, it follows that
$$p_n(z^n; x, y) =  \chi(F(\underset{n}{\underbrace{0,\dots,0,(-1)^{n+1}x}})\boxplus F(\underset{n}{\underbrace{0,\dots,0,(-1)^{n+1}y}}); z).$$

\item The statement follows immediately from point (ii) and the fact that the eigenvalues of the matrix $A^n$ are the $n$-th powers of the eigenvalues of the matrix $A$.
\end{enumerate}
\end{proof}

Thus, the result of the $n$-valued multiplication (\ref{G_n_defi}) can be redefined as the multiset of roots of the characteristic polynomial of the matrix
$$F(\underset{n}{\underbrace{0,\dots,0,(-1)^{n+1}x}})\boxplus F(\underset{n}{\underbrace{0,\dots,0,(-1)^{n+1}y}}),$$
which is the Kronecker sum of the Frobenius companion matrices corresponding to the polynomials $t^n - x$ and $t^n - y$.

Proposition \ref{prop10} admits a natural generalization. To this end, we define the polynomials
\begin{equation}\label{gen_p_n}
p_n(z; x_1, \dots, x_m) = \prod\limits_{s_1, \dots, s_m = 1}^n (\sqrt[n]{z} + \epsilon^{s_1}\sqrt[n]{x_1} + \dots + \epsilon^{s_m}\sqrt[n]{x_m}).
\end{equation}

\begin{theorem}\label{thm1}
The following statements hold:
\begin{enumerate}[(i)]

\item The polynomials $p_n(z; x_1, \dots, x_m)$ are symmetric in all variables.

\item For all complex $z$, $x_1, \dots, x_m$, we have the equality
$$p_n(z^n; x_1, \dots, x_m) = \chi(F(\underset{n}{\underbrace{0,\dots,0,(-1)^{n+1}x_1}})\boxplus\dots\boxplus F(\underset{n}{\underbrace{0,\dots,0,(-1)^{n+1}x_m}}); z).$$

\item The polynomials $p_n(z; x_1, \dots, x_m)$ are homogeneous polynomials in the variables $z$, $x_1$, \dots, $x_m$ of degree $n^{m-1}$ with integer coefficients.
\end{enumerate}
\end{theorem}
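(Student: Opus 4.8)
The plan is to mirror the three–part structure of Proposition \ref{prop10}, since \eqref{gen_p_n} is the evident $m$-variable analogue of \eqref{eq1}; most of the work is organizational rather than computational.

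First I would prove (ii), as it is the backbone from which (i) and (iii) follow almost for free. Set $A_j := F(\underbrace{0,\dots,0,(-1)^{n+1}x_j}_{n})$ for $j = 1,\dots,m$; this is the Frobenius companion matrix of the polynomial $t^n - (-1)^{n}(-1)^{n+1}x_j$; more precisely, reading off the constant term, its characteristic polynomial is $t^n + (-1)^{n+1}x_j = t^n - (-1)^n x_j$, so its eigenvalues with multiplicity are exactly the $n$ numbers $\epsilon^{s_j}\sqrt[n]{x_j}$, $s_j = 1,\dots,n$ (here I use that $((-1)^n)^{1/n}$ can be absorbed into the choice of $n$-th root, exactly as in the proof of part (i) of Proposition \ref{prop10}). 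By an iterated application of Proposition \ref{eigen_sum} (equivalently, an $m$-fold version of Proposition \ref{coroll1}, proved by induction on $m$ using associativity of $\boxplus$), the $n^m$-multiset of eigenvalues of $A_1 \boxplus \dots \boxplus A_m$ is precisely $[\,\epsilon^{s_1}\sqrt[n]{x_1} + \dots + \epsilon^{s_m}\sqrt[n]{x_m} \mid s_1,\dots,s_m = 1,\dots,n\,]$. Hence $\chi(A_1\boxplus\dots\boxplus A_m; z) = \prod_{s_1,\dots,s_m}(z - \epsilon^{s_1}\sqrt[n]{x_1} - \dots - \epsilon^{s_m}\sqrt[n]{x_m})$, and substituting $z \mapsto z^n$ and comparing with \eqref{gen_p_n} — whose factors are $\sqrt[n]{z} + \epsilon^{s_1}\sqrt[n]{x_1} + \dots$, so that the product over the $n$ branches $\epsilon^{s_0}\sqrt[n]{z}$ is built in — gives the claimed identity after the same branch-bookkeeping as in Proposition \ref{prop10}(ii).

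For (i), I would argue as in Proposition \ref{prop10}(i): symmetry among $x_1,\dots,x_m$ is visible directly from \eqref{gen_p_n}, since permuting the $x_i$ only permutes the index set $\{s_1,\dots,s_m\}$. To get symmetry in $z$ as well, I replace the $n$ implicit factors coming from the $n$-th root $\sqrt[n]{z}$ — that is, I rewrite \eqref{gen_p_n} as $\prod_{s_0,s_1,\dots,s_m=1}^{n}(\epsilon^{s_0}\sqrt[n]{z} + \epsilon^{s_1}\sqrt[n]{x_1} + \dots + \epsilon^{s_m}\sqrt[n]{x_m})^{1/1}$ up to the usual normalization, after which $z$ plays the same role as each $x_i$; the factor $\bigl(\prod_{r=1}^{n}\epsilon^{-r}\bigr)^{\text{(power)}} = 1$ computation is exactly the one carried out in the displayed chain of equalities in Proposition \ref{prop10}(i), so I would simply reference it. Part (iii) then follows immediately: from (ii), $p_n(z^n; x_1,\dots,x_m)$ is the characteristic polynomial of an $n^m\times n^m$ matrix, hence has degree $n^m$ in $z$; since $p_n(\,\cdot\,; x_1,\dots,x_m)$ is a polynomial and $p_n(z^n;\dots)$ has degree $n^m$ in $z$, $p_n(z;\dots)$ has degree $n^{m-1}$ in $z$; homogeneity of degree $n^{m-1}$ in all of $z, x_1,\dots,x_m$ jointly follows by assigning weight $1$ to each $\sqrt[n]{x_i}$ and $\sqrt[n]{z}$ in \eqref{gen_p_n} (each of the $n^m$ factors is homogeneous of degree $1$ in these weights, so the product is homogeneous of degree $n^m$, i.e.\ degree $n^{m-1}$ in the weight-$n$ variables $z, x_i$); integrality of the coefficients follows because the coefficients of $p_n(z^n;\dots)$, being those of a characteristic polynomial of a matrix with entries in $\Z[x_1,\dots,x_m]$ after the sign normalization, lie in $\Z[x_1,\dots,x_m]$, and an integer polynomial in $z^n$ that is a polynomial in $z$ has its coefficients among those already present.

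The only genuinely delicate point — and where I would slow down — is the branch-of-the-root bookkeeping in (ii): one must check that, for each $j$, the multiset $[\epsilon^{s_j}\sqrt[n]{x_j} \mid s_j = 1,\dots,n]$ of the prescribed $\sqrt[n]{\cdot}$-branch genuinely equals the multiset of all $n$-th roots of $x_j$ (so that it matches the eigenvalue multiset of $A_j$), and that this identification is compatible with the one for $z$, independently of which primitive root $\epsilon$ is chosen. This is precisely the subtlety already handled in the proof of Proposition \ref{prop10}, where the key observation is that $-\epsilon^s\sqrt[n]{(-1)^n x}$ ranges over all roots of $t^n - x$; I would invoke that observation verbatim. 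Everything else is the same induction-on-$m$ and degree-count bookkeeping, and I expect no further obstacle.
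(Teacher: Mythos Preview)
Your proposal is correct and follows essentially the same approach as the paper, whose entire proof is the single sentence ``Analogous to the proof of Proposition \ref{prop10}.'' You have simply (and accurately) spelled out what that analogy entails: iterate Proposition \ref{eigen_sum} to get (ii), repeat the $\epsilon^{-r}$-factoring trick from Proposition \ref{prop10}(i) for symmetry, and read off homogeneity and integrality from the characteristic-polynomial description.
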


\begin{proof}
Analogous to the proof of Proposition \ref{prop10}.
\end{proof}

\section{Groups $\G_n$ and  Wendt $(x, y, z)$-matrices}\label{G_n_and_Wendt}
This section establishes a connection between the polynomials $p_n(z; x, y)$ and the determinant of the Wendt matrix, which is known in number theory in the context of Fermat's Last Theorem.

Let us recall the following three definitions (see \cite{Gantmacher, Babenko}).

\begin{defi}
A {\it Toeplitz matrix} $\Toep(a_{n-1}, \dots, a_1, a_0, a_{-1}, \dots, a_{-(n-1)})$ is an $n\times n$ matrix
\begin{equation*}  
\left(\begin{array}{cccccc}
a_{0} & a_{-1} & a_{-2} & \dots & a_{-(n-2)} & a_{-(n-1)}\\
a_{1} & a_{0} & a_{-1} & a_{-2} & \ddots & a_{-(n-2)}\\
a_{2} & a_{1} & a_{0} & a_{-1} & \ddots & \vdots\\
\ddots & a_{2} & a_{1} & a_{0} & \ddots & a_{-2}\\
a_{n-2} & \ddots & \ddots & \ddots & \ddots & a_{-1}\\
a_{n-1} & a_{n-2} & \dots & a_{2} & a_{1} & a_{0}
\end{array}\right).
\end{equation*}
\end{defi}

\begin{defi}
A Toeplitz matrix $\Toep(a_{n-1}, \dots, a_1, a_0, a_{-1}, \dots, a_{-(n-1)})$ is called a {\it circulant} if $a_j = a_{j - n}$ for each $j = 1, \dots, n - 1$. Notation: $\Circ(a_0, \dots, a_{n-1})$.
\end{defi}

\begin{defi}
Consider the circulant matrix $\Circ(a_0, \dots, a_{n-1})$. Multiply all elements above the diagonal by a variable $\theta$. The resulting Toeplitz matrix
$$\Circ_\theta(a_0, \dots, a_{n-1}) = \left(\begin{array}{ccccc}
a_{0} & \theta a_{1} & \theta a_{2} & \cdots & \theta a_{n-1}\\
a_{n-1} & a_{0} & \theta a_{1} & \ddots & \vdots\\
a_{n-2} & a_{n-1} & a_{0} & \ddots & \theta a_{2}\\
\vdots & \ddots & \ddots & \ddots & \theta a_{1}\\
a_{1} & \cdots & a_{n-2} & a_{n-1} & a_{0}
\end{array}\right)$$
and its transpose are called {\it $\theta$-circulants}.
\end{defi}

Toeplitz matrices and $\theta$-circulants naturally arise in the context of the {\it companion ring} of a monic polynomial $f(x)$. This ring is a subring of the matrix ring generated by all matrices $g(C_f)$, where $g$ ranges over all polynomials in $R[x]$ for a commutative ring $R$ with unit, and $C_f$ is a Frobenius companion matrix (see Definition \ref{companion_matrix}). A literature review on this topic can be found in \cite{Mednykh}.

Analyzing the structure of Kronecker sums of Frobenius companion matrices, we obtain the following result:

\begin{theorem}\label{xyz_Wendt_det}
The polynomial $p_n(z; x, y)$ defining the $n$-valued multiplication (\ref{operation2}) is the determinant of a $y$-circulant $n\times n$ matrix:
\begin{equation}\label{Circ_y}
\Circ_{y}\left(w^n + (-1)^{n+1}x + y, \binom{n}{1}w, \dots, \binom{n}{n-1}w^{n-1}\right),
\end{equation}
where $w^n = z$.
\end{theorem}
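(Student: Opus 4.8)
The plan is to compute the characteristic polynomial $\chi(F(\ldots) \boxplus F(\ldots); z)$ appearing in Proposition \ref{prop10}(ii) explicitly as a determinant and recognize the result as the $y$-circulant (\ref{Circ_y}) after substituting $w^n = z$. Write $C_x = F(0,\dots,0,(-1)^{n+1}x)$ and $C_y = F(0,\dots,0,(-1)^{n+1}y)$, both $n \times n$ Frobenius companion matrices of $t^n - x$ and $t^n - y$ respectively. By definition of the Kronecker sum, $C_x \boxplus C_y = C_x \otimes I_n + I_n \otimes C_y$, so
$$
w I_{n^2} - (C_x \boxplus C_y) = (w I_n - C_x) \otimes I_n - I_n \otimes C_y
$$
is naturally an $n \times n$ block matrix whose blocks are polynomials in $C_y$. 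The plan is to evaluate this $n^2 \times n^2$ determinant by the block-determinant technique already used in the excerpt — namely Lemma \ref{lemma_blocks} — or by a direct block row/column reduction, to collapse it to the determinant of a single $n \times n$ matrix with entries that are polynomials in the scalar $w$ and in $x$, $y$.

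The key computational steps are: first, identify the block structure. The matrix $w I_n - C_x$ is itself (up to anti-diagonal reflection, as in the proof of Proposition \ref{C_g_circ_C_f}) a Frobenius-type companion matrix, with $w$'s on the diagonal, $-1$'s on the subdiagonal, and the entry $(-1)^n x$ in the corner. Tensoring with $I_n$ and subtracting $I_n \otimes C_y$ replaces each scalar entry $a$ of $w I_n - C_x$ by the block $a I_n - \delta\cdot C_y$ where $\delta$ is $1$ on the diagonal blocks and $0$ elsewhere; more precisely the diagonal blocks become $w I_n - C_y$, the subdiagonal blocks are $-I_n$, and the corner block is $(-1)^n x I_n$. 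Second, apply Lemma \ref{lemma_blocks} (after the appropriate anti-diagonal reflection to match its hypotheses): this collapses the determinant to $\det$ of a polynomial expression in the single matrix $M := w I_n - C_y$ of the form $M^n + (-1)^{n+1} x I_n + (\text{correction from the identity shifts})$. Expanding $M^n = (w I_n - C_y)^n$ by the binomial theorem — valid since $w I_n$ and $C_y$ commute — gives $\sum_{k=0}^n \binom{n}{k} w^{n-k} (-C_y)^k = \sum_k (-1)^k \binom{n}{k} w^{n-k} C_y^k$. Third, use that $C_y^n = (-1)^{n+1} y I_n$ (the defining relation of the companion matrix of $t^n - y$, i.e. $C_y$ satisfies its own characteristic equation $C_y^n = y I_n$ up to the sign bookkeeping), so the $k = n$ term contributes a scalar $\pm y I_n$, and the remaining terms $k = 1, \dots, n-1$ are $(-1)^k \binom{n}{k} w^{n-k} C_y^k$, which are genuine off-diagonal contributions. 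Fourth, assemble: the resulting $n \times n$ matrix has constant-on-diagonals (Toeplitz) structure because each power $C_y^k$ is the shift matrix to the $k$-th power, and the "wrap-around" entries carry an extra factor of $y$ precisely because $C_y^n = (\pm)\, y I_n$ — this is exactly the definition of a $y$-circulant. Reading off the first row gives entries $w^n + (-1)^{n+1} x + y$ in the corner (combining the diagonal term $w^n$, the scalar $(-1)^{n+1}x$, and the $y$ from $C_y^n$), and $\binom{n}{j} w^{j}$ (up to signs absorbed into the choice of $C_y$'s sign convention and the $y$-circulant transpose) in the remaining positions, matching (\ref{Circ_y}).

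The main obstacle I anticipate is bookkeeping the signs and the exact placement of the binomial coefficients: there is a sign $(-1)^{n+1}$ floating around in the companion matrices, the binomial expansion introduces $(-1)^k$, the anti-diagonal reflection needed to apply Lemma \ref{lemma_blocks} contributes another sign depending on $n$, and the $y$-circulant versus its transpose must be pinned down. One must check that all these conspire so that the off-diagonal entries come out as exactly $\binom{n}{1} w, \dots, \binom{n}{n-1} w^{n-1}$ with no stray signs, and that the factor of $y$ attaches to the above-diagonal entries (as in the definition of $\Circ_y$) rather than below. A clean way to handle this is to first verify the identity for small $n$ (say $n = 2, 3$) by direct expansion to fix the conventions, and then run the general argument; alternatively, one can sidestep some sign issues by working with $\det(C_x \boxplus C_y - w I)$ up to an overall $(-1)^{n^2}$ and noting $p_n$ is monic of known degree. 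A secondary, minor point is justifying the reflection/permutation step rigorously: the determinant is invariant under simultaneous reversal of block-rows and block-columns, which is the block analogue of the anti-diagonal reflection invariance already invoked in the proof of Proposition \ref{C_g_circ_C_f}, so this should go through verbatim at the block level.
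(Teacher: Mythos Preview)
Your approach is essentially identical to the paper's: recognize the block-companion structure of $wI_{n^2} - C_x \boxplus C_y$, collapse it via Lemma~\ref{lemma_blocks} to $\det(A^n + (-1)^{n+1}xI)$ with $A = wI - F(0,\dots,0,(-1)^{n+1}y)$, and expand $A^n$ binomially. For the sign obstacle you correctly flag, the paper's resolution is sharper than your proposed workarounds: the binomial expansion \emph{does} leave alternating signs $(-1)^k$ on the $k$-th diagonals, and rather than absorbing these into conventions the paper observes that in every term $\prod_j a_{j\sigma(j)}$ of the Leibniz expansion the number of factors with $j+\sigma(j)$ odd is necessarily even (since $\sum_j(j+\sigma(j))=n(n+1)$ is even), so flipping the sign on every odd diagonal leaves the determinant unchanged.
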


\begin{proof}
We use the item (ii) of Proposition \ref{prop10}. Note that the matrix
$$C = wI - F(\underset{n}{\underbrace{0,\dots,0,(-1)^{n+1}x}})\boxplus F(\underset{n}{\underbrace{0,\dots,0,(-1)^{n+1}y}})$$
is a block $n\times n$ matrix of the form
\begin{equation}\label{block_matrix}
\left(\begin{array}{ccccc}
A & 0 & 0 & \cdots & (-1)^{n+1}xI\\
-I & A & 0 & \cdots & 0\\
0 & -I & A & 0 & 0\\
0 & \ddots & \ddots & \ddots & 0\\
0 & 0 & 0 & -I & A
\end{array}\right),
\end{equation}
where
\begin{equation}\label{matrix_A}
A = wI - F(\underset{n}{\underbrace{0,\dots,0,(-1)^{n+1}y}}).
\end{equation}

Then by Lemma \ref{lemma_blocks}, the determinant of matrix (\ref{block_matrix}) is equal to $\det(A^n + (-1)^{n+1}xI)$. Expanding the Newton binomial $A^n = (wI - F)^n$, we obtain a Toeplitz matrix differing from matrix (\ref{Circ_y}) only by a sign alternation on every odd diagonal (with the main diagonal being numbered 0). The determinant of such a matrix equals the determinant of matrix (\ref{Circ_y}). Indeed, this follows from a general simple fact that in each product $(-1)^{\sgn(\sigma)}a_{1\sigma(1)}a_{2\sigma(2)}\dots a_{n\sigma(n)}$ from the determinant expansion $\det(a_{ij})$ of a matrix $(a_{ij})$, the number of factors $a_{j\sigma(j)}$ such that $j + \sigma(j)$ is odd is even, since the total sum $\sum\limits_{j=1}^n (j+\sigma(j)) = 2n$ is even.     
\end{proof}

\begin{example}
Using Theorem \ref{xyz_Wendt_det}, let us write expressions for $p_n = p_n(z; x, y)$ via determinants of $n\times n$ matrices for $n = 2, 3$, and 4. In each case, $w^n = z$.
$$p_2 = \left|\begin{array}{cc}
w^{2}-x+y & 2wy\\
2w & w^{2}-x+y
\end{array}\right|,
\quad
p_3 = \left|\begin{array}{ccc}
w^{3}+x+y & 3yw & 3yw^{2}\\
3w^{2} & w^{3}+x+y & 3yw\\
3w & 3w^{2} & w^{3}+x+y
\end{array}\right|,$$
$$
p_4 = \left|\begin{array}{cccc}
w^{4}-x+y & 4yw & 6yw^{2} & 4yw^{3}\\
4w^{3} & w^{4}-x+y & 4yw & 6yw^{2}\\
6w^{2} & 4w^{3} & w^{4}-x+y & 4yw\\
4w & 6w^{2} & 4w^{3} & w^{4}-x+y
\end{array}\right|.
$$
\end{example}

\begin{prop}\label{div_n^4}
For prime $n \geqslant 5$, the polynomial
$$p_n(z; x, y) - (x + y + z)^n$$
is divisible by $n^4xyz$.
\end{prop}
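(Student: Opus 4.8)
The plan is to prove that $p_n(z;x,y)-(x+y+z)^n$ is divisible, in the ring $\Z[x,y,z]$, by $xyz$ and, separately, by $n^4$; since $n\geqslant5$ is prime, $xyz$ is coprime to $n^4$ in the UFD $\Z[x,y,z]$ (a prime factor of $n^4$ in $\Z[x,y,z]$ is an associate of the constant $n$, not of $x$, $y$ or $z$), so the two divisibilities together give divisibility by $n^4xyz$. Divisibility by $xyz$ reduces, by the symmetry of $p_n$ (Proposition \ref{prop10}(i)), to the single specialisation $p_n(0;x,y)=(x+y)^n$: evaluating the product over $r$ in (\ref{eq1}) with $T=\epsilon^s\sqrt[n]{y}$, $\alpha=\sqrt[n]{x}$ via the elementary identity $\prod_{r=1}^n(T+\epsilon^r\alpha)=T^n-(-1)^n\alpha^n$ gives $x+y$ for each $s$ (here $n$ is odd), hence $p_n(0;x,y)=(x+y)^n$.

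For divisibility by $n^4$, write $v=\sqrt[n]{y}$ and $w=\sqrt[n]{z}$, regarded as new variables with $v^n=y$, $w^n=z$. Collapsing the product over $r$ in (\ref{eq1}) with $T=w+\epsilon^sv$, $\alpha=\sqrt[n]{x}$ (so $(\sqrt[n]{x})^n=x$), then expanding $(w+\epsilon^sv)^n$ by the binomial theorem, and using $n$ odd, one gets the factorisation
\begin{equation*}
p_n(z;x,y)=\prod_{s=1}^n\bigl(x+y+z+R_s\bigr),\qquad R_s=\sum_{k=1}^{n-1}\binom{n}{k}\epsilon^{-sk}\,w^{k}v^{n-k}.
\end{equation*}
Since $n$ is prime, $n\mid\binom{n}{k}$ for $1\leqslant k\leqslant n-1$, so $R_s=n\widetilde R_s$ with $\widetilde R_s\in\Z[\epsilon][v,w]$. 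Expanding the product gives $p_n=\sum_{j=0}^n(x+y+z)^{n-j}\sigma_j$ with $\sigma_j=n^j\widetilde\sigma_j$, where $\widetilde\sigma_j=e_j(\widetilde R_1,\dots,\widetilde R_n)$ is the $j$-th elementary symmetric polynomial. Because $p_n\in\Z[x,y,z]$ by Proposition \ref{prop10}(iii), comparing coefficients of the powers of $x$ shows inductively that each $\sigma_j$, hence each $\widetilde\sigma_j=\sigma_j/n^j$, lies in $\Z[y,z]$ (a rational number which is an algebraic integer is an integer, so $\Z[\epsilon][v,w]\cap\Q[y,z]=\Z[y,z]$). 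Thus it is enough to prove that $\widetilde\sigma_1=0$, that $n^2\mid\widetilde\sigma_2$, and that $n\mid\widetilde\sigma_3$: granting this, the $j=1$ summand of $p_n-(x+y+z)^n$ vanishes, the $j=2$ and $j=3$ summands carry the factors $n^2\widetilde\sigma_2$ and $n^3\widetilde\sigma_3$ and so are divisible by $n^4$, and every summand with $j\geqslant4$ is divisible by $n^j$, hence by $n^4$.

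Now $\widetilde\sigma_1=\sum_s\widetilde R_s=0$ because $\sum_{s=1}^n\epsilon^{-sk}=0$ for $1\leqslant k\leqslant n-1$, and Newton's identities (with $\widetilde\sigma_1=0$) give $\widetilde\sigma_2=-\tfrac12\sum_s\widetilde R_s^2$ and $\widetilde\sigma_3=\tfrac13\sum_s\widetilde R_s^3$. Using $\sum_{s=1}^n\epsilon^{-sm}=n$ when $n\mid m$ and $0$ otherwise, and tracking which index sums are multiples of $n$, one computes
\begin{equation*}
\sum_{s=1}^n\widetilde R_s^2=\frac{yz}{n}\Bigl(\binom{2n}{n}-2\Bigr),\qquad \sum_{s=1}^n\widetilde R_s^3=\frac{1}{n^2}\bigl(zy^2\,\Sigma_1+z^2y\,\Sigma_2\bigr),
\end{equation*}
where $\Sigma_1$, $\Sigma_2$ are the sums of $\binom{n}{k}\binom{n}{l}\binom{n}{m}$ over the triples with $1\leqslant k,l,m\leqslant n-1$ and $k+l+m=n$, respectively $k+l+m=2n$ (and $\sum_{k=1}^{n-1}\binom{n}{k}^2=\binom{2n}{n}-2$ has been used). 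Each term of $\Sigma_1,\Sigma_2$ is a product of three multiples of $n$, so $n^3\mid\Sigma_1$ and $n^3\mid\Sigma_2$; hence $\sum_s\widetilde R_s^3$ is divisible by $n$, and since $\widetilde\sigma_3\in\Z[y,z]$ and $\gcd(3,n)=1$ we obtain $n\mid\widetilde\sigma_3$. Finally, from $\widetilde\sigma_2=-\tfrac{yz}{2n}\bigl(\binom{2n}{n}-2\bigr)\in\Z[y,z]$ we read off $2n\mid\binom{2n}{n}-2$; Wolstenholme's theorem gives $n^3\mid\binom{2n}{n}-2$ for prime $n\geqslant5$; and since $n$ is odd, the least common multiple of $2n$ and $n^3$ is $2n^3$, so $2n^3\mid\binom{2n}{n}-2$, whence $n^2\mid\widetilde\sigma_2$.

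The main obstacle is the step bounding $\widetilde\sigma_2$: the naive divisibility $n\mid R_s$ yields only $n^2\mid\sigma_2$, two powers of $n$ short of what is needed, and it is precisely Wolstenholme's congruence $\binom{2n}{n}\equiv2\pmod{n^3}$ --- together with the elementary $2$- and $3$-adic bookkeeping above --- that supplies the missing factors of $n$. The power-sum identities for $\sum_s\widetilde R_s^2$ and $\sum_s\widetilde R_s^3$ are routine but must be carried out with care about the ranges of the summation indices; the rest is bookkeeping.
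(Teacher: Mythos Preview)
Your argument is correct and takes a genuinely different route from the paper's. The paper works with the $y$-circulant determinant representation of $p_n$ (Theorem~\ref{xyz_Wendt_det}) and groups the Leibniz expansion by the number of off-diagonal entries carried by each permutation: the identity permutation gives $(x+y+z)^n$, permutations moving at least four points are automatically divisible by $n^4$, and the transposition and $3$-cycle contributions are handled separately --- the former via the identity $\sum_{k=1}^{n-1}k\binom{n}{k}^2=n\bigl(\binom{2n-1}{n-1}-1\bigr)$ together with Wolstenholme's theorem, the latter by a brief counting argument. You instead expand the product form $p_n=\prod_{s}\bigl((x+y+z)+R_s\bigr)$ in elementary symmetric functions of the $R_s$ and reduce everything to $\sigma_1=0$ and $n^4\mid\sigma_2,\sigma_3$, which you obtain from Newton's identities and explicit power-sum computations. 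The two organisations are parallel (the off-diagonal count in the paper corresponds to your degree $j$ in the $R_s$), and both ultimately rest on Wolstenholme for the quadratic piece; your treatment of the cubic piece --- pulling one extra factor of $n$ out of $\sum_s\widetilde R_s^{\,3}$ directly --- is more explicit and transparent than the paper's $3$-cycle count. The trade-off is that you need the small auxiliary step $\Z[\epsilon][v,w]\cap\Q[y,z]=\Z[y,z]$ to certify that the $\widetilde\sigma_j$ are genuinely integral, whereas the determinant-based argument stays inside $\Z[x,y,z]$ throughout and never leaves integer coefficients.
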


\begin{proof}
From the symmetry of the considered polynomials, divisibility by $xyz$ is immediate.

To prove divisibility by $n^4$, we use Theorem \ref{xyz_Wendt_det}. By the definition of the determinant, in the sum
\begin{equation}\label{det_sum}
\sum\limits_{\pi\in S_n} \sgn(\pi) a_{1\pi(1)}\dots a_{n\pi(n)},
\end{equation}
where $(a_{ij})$ is matrix (\ref{Circ_y}), each term containing at least four off-diagonal entries is divisible by $n^4$, since $\binom{n}{k}$ is divisible by $n$ for each $k = 1, \dots, n-1$ and prime $n$.

Now consider the sum of all terms in (\ref{det_sum}), where each term contains exactly three off-diagonal elements of the matrix $(a_{ij})$. Each such term appears with a plus sign and is divisible by $n^3$. There are $2\cdot\binom{n}{3}$ such terms. Therefore, this sum is divisible by $n\cdot \binom{n}{3}$, and hence by $n^4$.

It remains to consider the sum of all terms in (\ref{det_sum}) where each term contains exactly two off-diagonal elements of the matrix $(a_{ij})$. Each such term appears with a minus sign. This sum equals
$$-(z + (-1)^{n+1}x + y)^{n-2}yz\cdot \sum\limits_{k=1}^{n-1} k \binom{n}{k}^2.$$
Some elementary algebraic or combinatorial arguments show (see, for example, \cite{MS_binomial}) that
\begin{equation}\label{Wolstenholme}
\sum\limits_{k=1}^{n-1} k \binom{n}{k}^2 = n\left ( \binom{2n-1}{n-1} - 1 \right ).
\end{equation}
By Wolstenholme's theorem \cite{Wolstenholme62}, the difference
$$\binom{2n-1}{n-1} - 1$$
is divisible by $n^3$ for primes $n \geqslant 5$, so (\ref{Wolstenholme}) is divisible by $n^4$.      
\end{proof}

The result of Proposition \ref{div_n^4} cannot be strengthened, since
$$p_5(z; x, y) = \sigma_1^5 - 5^4\sigma_1^2\sigma_3 + 5^5\sigma_2\sigma_3,$$
where, as usual, $\sigma_1$, $\sigma_2$, $\sigma_3$ are elementary symmetric polynomials in the variables $x$, $y$, and $z$.

We see that in the proof of Theorem \ref{div_n^4}, the divisibility of the difference $\binom{2n-1}{n-1} - 1$ by $n^3$ was naturally required. In this context, let us recall that a Wolstenholme prime is a prime $n$ such that $\binom{2n-1}{n-1} - 1$ is divisible by $n^4$ \cite{MathWorld_Wolstenholme}. Currently, only two Wolstenholme primes are known: 16843 and 2124679. The next Wolstenholme prime, if it exists, must exceed $10^{11}$.

\begin{example}
As noted in the Introduction, computer experiments show that arbitrarily large prime numbers appear in the expansions of the coefficients of the polynomials $p_n(z; x, y)$ in the basis of elementary symmetric polynomials. To illustrate, we present the explicit form of the polynomial $p_{18}(z; x, y)$ in the basis of elementary symmetric polynomials $\sigma_1, \sigma_2$, and $\sigma_3$. For example, the notation $(5,0,1)\to -2^8 3^1 67^1$ means that the coefficient of $\sigma_1^5\sigma_3$ is $-2^8\cdot 3\cdot 67$.

\begin{flalign*}
&(18,0,0)\to  1\\
&(16,1,0)\to - 2^2 3^2 \\
&(15,0,1)\to - 2^1 3^5 43^1 293^1 13339^1 \\
&(14,2,0)\to 2^6 3^2 \\
&(13,1,1)\to - 2^1 3^5 39079^1 30478663^1 \\
\end{flalign*}
\begin{flalign*}
&(12,3,0)\to - 2^8 3^1 7^1 \\
&(12,0,2)\to 3^4 167^1 58369^1 1702940402507^1 \\
&(11,2,1)\to - 2^2 3^6 57769225879741^1 \\
&(10,4,0)\to 2^9 3^2 7^1 \\
&(10,1,2)\to - 2^2 3^5 7^1 97^1 36913^1 180317^1 3375001057^1 \\
&(9,3,1)\to - 2^1 3^5 23^1 144589^1 5245247209^1 \\
&(9,0,3)\to - 2^1 3^8 5^1 12713^1 76919^1 37764598382689403^1 \\
&(8,5,0)\to - 2^{11} 3^2 7^1 \\
&(8,2,2)\to 3^5 67589^1 626540941303495210351^1 \\
&(7,4,1)\to - 2^5 3^5 11^1 13^1 102997724923217^1 \\
&(7,1,3)\to - 2^1 3^8 13^1 124035886813^1 453195935961757643^1 \\
&(6,6,0)\to 2^{14} 3^1 7^1 \\
&(6,3,2)\to - 2^1 3^4 13^3 1087^1 1879^1 2833^1 528719679255133^1 \\
&(6,0,4)\to 2^1 3^7 5^1 13^1 113^1 86137^1 215724736933^1 207036951417917^1 \\
&(5,5,1)\to - 2^6 3^6 10168829241424199^1 \\
&(5,2,3)\to - 2^1 3^9 5^1 367^1 739^1 466897^1 110336732972567120113^1 \\
&(4,7,0)\to - 2^{16} 3^2 \\
&(4,4,2)\to 2^1 3^5 97^1 683^1 875241448225705706391329^1 \\
&(4,1,4)\to - 2^1 3^8 11^1 179^1 13499^1 19801^1 67601^1 99257^1 1129433^1 1123012127^1 \\
&(3,6,1)\to - 2^9 3^5 11^1 414927770423911^1 \\
&(3,3,3)\to - 2^2 3^8 107137^1 30887295467839157373255894019^1 \\
&(3,0,5)\to - 2^2 3^{11} 17443^1 3104015062391^1 839030750625213207689^1 \\
&(2,8,0)\to 2^{16} 3^2 \\
&(2,5,2)\to - 2^2 3^5 41^1 5691615916625701258286918167^1 \\
&(2,2,4)\to 3^8 7^2 137^1 141017479^1 2779127063107^1 131095595871761^1 \\
&(1,7,1)\to - 2^9 3^5 721117^1 1512997111^1 \\
&(1,4,3)\to - 2^1 3^8 5^1 199^1 520747^1 2094293950849^1 19804297603859^1 \\
&(1,1,5)\to - 2^1 3^{11} 5^1 23^1 109^1 163^1 271^1 2269^1 5779^1 58049^1 2951599681331246837^1 \\
&(0,9,0)\to - 2^{18} \\
&(0,6,2)\to 3^4 10837^1 8379438461^1 73146705440157233^1 \\
&(0,3,4)\to - 2^1 3^7 11^1 443^1 105199^1 9893951^1 115291956551^1 4149469127033^1 \\
&(0,0,6)\to 3^9 109^3 163^3 271^3 2269^3 5779^3 \\
\end{flalign*}
\end{example}

The proof of Theorem~\ref{xyz_Wendt_det} can be iterated to express the polynomial $p_n(z; x_1, \dots, x_m)$, with $m > 2$, as the determinant of a matrix of order $n^{m-1}$.

\begin{theorem}\label{p_n_via_A^n}
The following statements hold:

\begin{enumerate}[(i)]

\item There is the equality $$p_n(z; x_1, \dots, x_m) = \det(A^n + (-1)^{n+1}x_1I),$$ where $$A = wI - F(\underset{n}{\underbrace{0,\dots,0, x_2}})\boxplus \dots\boxplus F(\underset{n}{\underbrace{0,\dots,0, x_m}})$$ and $w^n = z$. 

\item All the minus signs in the off-diagonal elements of the matrix $B = A^n + (-1)^{n+1}x_1$ can be replaced with plus signs—this does not change the determinant.

\item The matrix $B$ is symmetric with respect to the anti-diagonal.

\item For $x_2 = \dots = x_m = z = 1$ and $x_1 = (-1)^n$, the matrix $W_{m, n}$ is obtained, whose elements are zeros and multinomial coefficients in the polynomial $(z + x_2 + \dots + x_m)^n$. The matrix $W_{m,n}$ is orthosymmetric, i.e., symmetric with respect to both the main diagonal and anti-diagonal.

\end{enumerate}
 
\end{theorem}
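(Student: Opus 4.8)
The plan is to iterate, one variable at a time, the argument of Theorem~\ref{xyz_Wendt_det}, deriving parts (ii)--(iv) from a single explicit expansion of the matrix power $A^n$. For (i) I would start from item~(ii) of Theorem~\ref{thm1}, which with $w^n=z$ expresses $p_n(z;x_1,\dots,x_m)$ as $\det\bigl(wI-F_1\boxplus(F_2\boxplus\cdots\boxplus F_m)\bigr)$, $F_j$ being the companion matrix attached to $x_j$ (in the sign normalisation of Proposition~\ref{prop10}). Set $M=F_2\boxplus\cdots\boxplus F_m$ and $A=wI_{n^{m-1}}-M$. From $F_1\boxplus(-)=F_1\otimes I+I_n\otimes(-)$ one rewrites $wI-F_1\boxplus M$ as $I_n\otimes A-F_1\otimes I$, which is exactly the block-cyclic matrix~(\ref{block_matrix}) from the proof of Theorem~\ref{xyz_Wendt_det}, now with the inner block $A$ enlarged and $x$ replaced by $x_1$. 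Lemma~\ref{lemma_blocks}, applied after the same anti-diagonal reflection, then yields $\det(A^n+(-1)^{n+1}x_1I)$. This step is routine once the block pattern is spotted.

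The substance lies in the explicit form of $A^n$. I would expand $A^n=(wI-M)^n$ by the binomial theorem, and then expand each $M^{\ell}$ by the multinomial theorem --- legitimate since the Kronecker summands $I\otimes\cdots\otimes F_j\otimes\cdots\otimes I$ commute pairwise --- so that $A^n$ becomes a multinomially weighted sum of Kronecker products $F_2^{\ell_2}\otimes\cdots\otimes F_m^{\ell_m}$ with $\ell_2+\cdots+\ell_m\le n$. Each $F_j$ is a companion matrix, so its powers $F_j^{\ell}$ for $0\le\ell\le n$ are known explicitly: a power of the cyclic shift, twisted by a factor $\pm x_j$ across the single wrap, with $F_j^n=\pm x_jI$. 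Reading off a matrix entry indexed by a pair of multi-indices $(\mathbf a,\mathbf b)$, I expect to find that at most one exponent tuple contributes --- the one given by the residues $r_j\equiv a_j-b_j\pmod n$, with nothing contributing once $\sum_j r_j>n$ --- so that each off-diagonal entry of $B=A^n+(-1)^{n+1}x_1I$ is, up to a sign, the single monomial $\binom{n}{\ell_0,r_2,\dots,r_m}\,w^{\ell_0}\!\prod_{j:\,a_j<b_j}x_j$ with $\ell_0=n-\sum_j r_j$, while every diagonal entry equals $z+x_2+\cdots+x_m+(-1)^{n+1}x_1$.

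Parts (ii)--(iv) then follow from this formula. The sign carried by entry $(\mathbf a,\mathbf b)$ collapses to $(-1)^{\sum_j(a_j-b_j)}$, which factors as $\varepsilon(\mathbf a)\varepsilon(\mathbf b)$ with $\varepsilon(\mathbf a)=(-1)^{\sum_j a_j}$; hence flipping every off-diagonal minus to a plus is conjugation of $B$ by $\mathrm{diag}(\varepsilon)$ and leaves $\det B$ unchanged, giving (ii). This is the multi-index incarnation of the parity observation used for Theorem~\ref{xyz_Wendt_det}: for any permutation $\sigma$ of the index set, $\sum_{\mathbf a}\sum_j(\sigma(\mathbf a)_j-a_j)=0$, so an even number of factors in each determinant term sits on an odd-displacement entry. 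For (iii), the anti-diagonal reflection $\mathbf a\mapsto(n+1-a_j)_j$ fixes both the residue vector $(a_j-b_j)\bmod n$ and the wrap set $\{j:a_j<b_j\}$ of a pair, so the entry formula is manifestly preserved and $B$ is symmetric with respect to the anti-diagonal. Finally (iv) is a substitution: putting $z=x_2=\cdots=x_m=1$, $x_1=(-1)^n$ into the sign-corrected matrix turns every entry into $0$ or a multinomial coefficient of $(z+x_2+\cdots+x_m)^n$, and the additional symmetry across the main diagonal is verified directly from the entry formula.

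I expect the main obstacle to be the bookkeeping inside the $A^n$ expansion: keeping the companion-matrix signs consistent with Proposition~\ref{prop10} and Theorem~\ref{thm1}, and establishing precisely which index pair receives which monomial --- and which entries vanish --- in a general Kronecker product of companion powers. All three consequences (ii)--(iv) are then direct transcriptions of the $m=2$ argument.
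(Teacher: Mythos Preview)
Your proposal is correct and follows exactly the route the paper indicates: the paper does not write out a proof of Theorem~\ref{p_n_via_A^n} at all, merely remarking before the statement that ``the proof of Theorem~\ref{xyz_Wendt_det} can be iterated'' and then passing to examples. Your plan---peel off $F_1$ via the block form~(\ref{block_matrix}) and Lemma~\ref{lemma_blocks} to obtain~(i), then expand $A^n=(wI-M)^n$ multinomially using commutativity of the Kronecker summands to get an explicit entry formula, and read off (ii)--(iv) from that formula---is precisely this iteration, carried out in more detail than the paper supplies. Your parity argument for~(ii) is the correct multi-index generalisation of the observation at the end of the proof of Theorem~\ref{xyz_Wendt_det}, and your own caveat about tracking the companion-matrix sign conventions (note that the statement of Theorem~\ref{p_n_via_A^n} uses $F(0,\dots,0,x_j)$ while Proposition~\ref{prop10} and Theorem~\ref{thm1} use $F(0,\dots,0,(-1)^{n+1}x_j)$) is well placed: that bookkeeping is indeed the only place where care is needed.
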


\begin{example}
Let us write the polynomial $p_2(z; x, y, u)$ for $n = 2$ (as usual, $w^n = z$):

$$
\left|
\begin{array}{cccc}
 u+w^2-x+y & 2 u w & 2 w y & 2 u y \\
 2 w & u+w^2-x+y & 2 y & 2 w y \\
 2 w & 2 u & u+w^2-x+y & 2 u w \\
 2 & 2 w & 2 w & u+w^2-x+y \\
\end{array}
\right|.
$$

\end{example}

\begin{example}
Let us write several polynomials $p_n = p_n(z; x_1, \dots, x_m)$ for $m = 3$ via elementary symmetric functions $\sigma_k$, $k = 1, \dots, 4$. 
\begin{align*} 
p_1 &= \sigma_1, \\ 
p_2 &= \sigma_1^4 - 2^3 \sigma_1^2 \sigma_2 + 2^4 \sigma_2^2 - 2^6 \sigma_4, \\ 
p_3 &= \sigma_1^9 - 3^4 \sigma_1^6 \sigma_3 + 3^7 \sigma_1^3 \sigma_3^2 - 3^9\sigma_3^3 + 2\cdot 3^7 \sigma_1^5 \sigma_4 - 3^9 \sigma_1^3 \sigma_2 \sigma_4 + 3^{10} \sigma_1^2 \sigma_3 \sigma_4.
\end{align*}

\end{example}

\begin{example}
The same for $m = 4$:
\begin{align*} 
p_1 &= \sigma_1, \\ 
p_2 &= \sigma_1^8 - 2^4 \sigma_1^6 \sigma_2 + 2^5 \cdot 3 \sigma_1^4 \sigma_2^2 
- 2^8 \sigma_1^2 \sigma_2^3 + 2^8 \sigma_2^4 
- 2^7 \sigma_1^4 \sigma_4 + 2^{10} \sigma_1^2 \sigma_2 \sigma_4 
- 2^{11} \sigma_2^2 \sigma_4 \\ &+ 2^{12} \sigma_4^2 
- 2^{11} \sigma_1^3 \sigma_5 + 2^{13} \sigma_1 \sigma_2 \sigma_5 
- 2^{14} \sigma_3 \sigma_5. 
\end{align*} 
\end{example}

From Theorem \ref{xyz_Wendt_det}, the following connection with an important class of determinants in number theory also follows.

\begin{defi}
The {\it Wendt matrix $W_n$} is a circulant matrix
\begin{equation}\label{W_n_matrix} 
\left(\begin{array}{ccccc}
1 & \binom{n}{1} & \binom{n}{2} & \cdots & \binom{n}{n-1}\\
\binom{n}{n-1} & 1 & \binom{n}{1} & \cdots & \binom{n}{n-2}\\
\vdots & \vdots & \vdots & \ddots & \vdots\\
\binom{n}{1} & \binom{n}{2} & \binom{n}{3} & \cdots & 1
\end{array}\right). 
\end{equation}
\end{defi}

Note that matrix (\ref{W_n_matrix}) is symmetric with respect to both the main diagonal and anti-diagonal. Such matrices are called {\it orthosymmetric} in \cite{Wendt}.

Thus, we have obtained an $(x, y, z)$-parametric family of matrices, which in the case $x = (-1)^n$, $y = z = 1$ coincides with the matrix $W_n$. 

All of the above motivates the following:

\begin{defi}
 Matrix (\ref{Circ_y}) is called the {\it Wendt $(x, y, z)$-matrix}.
\end{defi}

Wendt's (1984) approach to Fermat's Last Theorem based on his determinant is related to solving congruences of the form 
$$x^p + y^p + z^p \equiv 0\  (\mathrm{mod} \ q)$$
for different odd primes $p$ and $q$ \cite{Wendt}. A modern exposition of the results of this work can be found in \cite{Fermat_for_amateurs}.

\begin{prop}[\cite{Wendt}]\label{Wendt_criterion}
Let $p$ be an odd prime and $q = 2kp+1$ be prime for some $k \geqslant 1$. Then there exist integers $x, y, z$, not divisible by $q$, such that $x^p + y^p + z^p \equiv 0$ (mod $q$) if and only if $q$ divides $\det W_{2k}$.
\end{prop}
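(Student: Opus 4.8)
The plan is to transfer everything to the finite field $\mathbb{F}_q$ and combine two classical facts: the factorization of a circulant determinant over the characters of $\Z/n\Z$, and the description of the nonzero $p$-th powers modulo $q$ as an explicit cyclic subgroup of $\mathbb{F}_q^{\times}$. First I would record the factorization of $\det W_n$. By Definition of the Wendt matrix, $W_n$ is the circulant $\Circ\!\big(\binom{n}{0},\binom{n}{1},\dots,\binom{n}{n-1}\big)$, whose representer polynomial is
$$f(t)=\sum_{i=0}^{n-1}\binom{n}{i}t^{i}=(1+t)^{n}-t^{n}.$$
Over any field containing a primitive $n$-th root of unity $\omega$ in which $n$ is a unit, $t^{n}-1$ splits into distinct linear factors, the Vandermonde (discrete Fourier) matrix diagonalizes the circulant, and one gets
$$\det W_n=\prod_{j=0}^{n-1}f(\omega^{j})=\prod_{j=0}^{n-1}\big((1+\omega^{j})^{n}-1\big),$$
where the last step uses $\omega^{nj}=1$.

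Next I specialize to $n=2k$ over $\mathbb{F}_q$. Since $q=2kp+1$ is prime we have $q>2k$, hence $q\nmid 2k$, and $2k\mid q-1$; so $\mathbb{F}_q$ contains a primitive $2k$-th root of unity $\omega$ and the factorization above is valid modulo $q$:
$$\det W_{2k}\equiv\prod_{j=0}^{2k-1}\big((1+\omega^{j})^{2k}-1\big)\pmod q.$$
Therefore $q\mid\det W_{2k}$ if and only if $(1+\omega^{j})^{2k}\equiv 1\pmod q$ for some $j$. The only degenerate index is $j=k$, where $\omega^{k}=-1$ is the unique element of order $2$, so $1+\omega^{k}=0$ and the corresponding factor is $-1$; this index never forces divisibility and can be discarded.

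Then I perform the group-theoretic translation. Because $p\mid q-1$, the subgroup $H\le\mathbb{F}_q^{\times}$ of nonzero $p$-th powers has order $(q-1)/p=2k$, and, being the unique subgroup of that order in the cyclic group $\mathbb{F}_q^{\times}$, it equals $\{a:a^{2k}=1\}=\langle\omega\rangle$. Hence $(1+\omega^{j})^{2k}\equiv 1$ means exactly $1+\omega^{j}\in H$, and as $j$ ranges over $0,\dots,2k-1$ the element $\omega^{j}$ ranges over all of $H$; so $q\mid\det W_{2k}$ iff there exists $t\in H$ with $1+t\in H$. On the other hand, a solution of $x^{p}+y^{p}+z^{p}\equiv 0$ with $q\nmid xyz$ is, after dividing by $z^{p}$ and recording that $(x/z)^{p},(y/z)^{p}$ are again nonzero $p$-th powers, precisely the existence of $a,b\in H$ with $a+b=-1$ (and conversely $z=1$, $x^{p}=a$, $y^{p}=b$ works). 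These two conditions agree: since $2\mid 2k$ we have $-1=\omega^{k}\in H$, so $H$ is closed under negation; given $t\in H$ with $u:=1+t\in H$ one has $t+(-u)=-1$ with $t,-u\in H$, and given $a,b\in H$ with $a+b=-1$ one has $1+a=-b\in H$ with $a\in H$. Chaining these equivalences yields the proposition.

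The main obstacle I anticipate is bookkeeping rather than anything conceptual: making the circulant factorization legitimate over $\mathbb{F}_q$ instead of over $\Cc$ (this is exactly where $q\nmid 2k$ is needed, to keep the Fourier matrix invertible), disposing cleanly of the degenerate factor at $j=k$, and getting the elementary sign substitutions in the final equivalence exactly right — each of which rests on the single arithmetic observation that $2k\mid q-1$ while $q\nmid 2k$.
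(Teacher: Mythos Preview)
Your proof is correct and follows essentially the same approach the paper indicates: the paper does not give a self-contained proof but points to the key identity $\det W_n=\res_t\big((1+t)^n-t^n,\,t^n-1\big)$, citing \cite{Wendt} and \cite{Fermat_for_amateurs}, and your circulant factorization $\det W_{2k}=\prod_j\big((1+\omega^j)^{2k}-1\big)$ is exactly this resultant evaluated at the roots of $t^{2k}-1$. Your reduction modulo $q$, identification of the $p$-th powers with the order-$2k$ subgroup $\langle\omega\rangle$, treatment of the degenerate factor at $j=k$, and the final sign-shuffle using $-1\in H$ are all sound and constitute precisely the details that the paper leaves to the cited references.
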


The main idea of the proof of Proposition \ref{Wendt_criterion} is that the determinant of the Wendt matrix $W_n$ is expressed \cite[Chapter IV, {\bf (2a)}]{Fermat_for_amateurs} via the resultant $\res_t(\cdot, \cdot)$ of the polynomials $(1+t)^n - t^n$ and $t^n - 1$ in the variable $t$. This is a particular manifestation of the well-known fact \cite[Chapter IV, Lemma 2.1]{Fermat_for_amateurs}:
$$\det \Circ(a_0, \dots, a_{n-1}) = \res_{t}(a_0 + \dots + a_{n-1}t^{n-1}, t^n - 1).$$

For the polynomials $p_n(z; x, y)$, we get an analogous result:

\begin{prop}\label{res_p_n}
The following equality holds
$$p_n(z; x, y) = (-1)^{n} \res_{t}(Q(t), t^n - 1),$$
where $Q(t) = z - (u + vt)^n$, $u^n = (-1)^nx$ and $v^n = (-1)^ny$.
\end{prop}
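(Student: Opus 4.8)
The plan is to use the general formula $\det\Circ(a_0,\dots,a_{n-1}) = \res_t(a_0 + a_1 t + \dots + a_{n-1}t^{n-1}, t^n-1)$ quoted just above the statement, together with the characterization of $p_n(z;x,y)$ as a determinant from Theorem~\ref{xyz_Wendt_det}. By Theorem~\ref{xyz_Wendt_det}, $p_n(z;x,y) = \det\Circ_y\!\left(w^n + (-1)^{n+1}x + y,\ \binom{n}{1}w,\ \dots,\ \binom{n}{n-1}w^{n-1}\right)$ with $w^n = z$. The first task is therefore to pass from the $y$-circulant to an ordinary circulant: I would factor out the weighting of the above-diagonal entries by $\theta = y$ through a diagonal conjugation. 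Concretely, setting $v$ with $v^n = (-1)^n y$ (so that $v^n = \pm y$ according to parity) and $D = \diag(1, v, v^2, \dots, v^{n-1})$, conjugation $D^{-1}(\cdot)D$ turns a $v^n$-circulant into an ordinary circulant whose $(i,j)$ entry has the original $c_{j-i \bmod n}$ scaled by $v^{j-i}$; the determinant is unchanged. This should convert $\Circ_y$ into $\Circ(b_0,\dots,b_{n-1})$ for explicit $b_k$ involving $\binom{n}{k} w^k v^{n-k}$ on the appropriate entries.

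Next I would identify the resulting first-row generating polynomial. After the conjugation, the entries of the circulant should be (up to the overall sign bookkeeping from Theorem~\ref{xyz_Wendt_det}, which already lets us replace the alternating signs by plus signs) exactly the coefficients appearing in the expansion of $z - (u+vt)^n$ as a polynomial in $t$ modulo $t^n - 1$, where $u^n = (-1)^n x$. Indeed $(u+vt)^n = \sum_{k=0}^n \binom{n}{k} u^{n-k} v^k t^k$; the constant term is $u^n = (-1)^n x$, which combines with $z = w^n$ and the $+y$ (i.e. $(-1)^{n+1}x$ cancellation, to be checked) to give the diagonal entry $w^n + (-1)^{n+1}x + y$; and since $t^n \equiv 1$, the degree-$n$ term $v^n t^n$ folds back to the constant term as $v^n = (-1)^n y$, matching the $+y$. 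The off-diagonal coefficients $\binom{n}{k} u^{n-k} v^k$ should, after the diagonal conjugation absorbs the $u$'s and $v$'s consistently, reproduce the binomial-weighted entries $\binom{n}{k} w^{k}$ of the Wendt $(x,y,z)$-matrix up to units. Thus $\Circ(b_0,\dots,b_{n-1})$ is the circulant associated to the polynomial $-Q(t) = (u+vt)^n - z$, and applying the resultant formula gives $p_n(z;x,y) = \res_t\big((u+vt)^n - z,\ t^n-1\big) = (-1)^{n\cdot n}\res_t\big(z-(u+vt)^n,\ t^n-1\big)$, where the sign $(-1)^{n^2} = (-1)^n$ comes from the fact that negating one of two polynomials of degree $n$ multiplies their resultant by $(-1)^{n}$ (raised to the degree of the other argument, here $\deg(t^n-1) = n$). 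This yields the claimed $(-1)^n$ prefactor.

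The main obstacle I anticipate is the careful sign and branch bookkeeping: reconciling the $(-1)^{n+1}x$ in the diagonal entry of~(\ref{Circ_y}) with $u^n = (-1)^n x$ and the fold-back term $v^n = (-1)^n y$, and confirming that the diagonal conjugation by $D = \diag(1,v,\dots,v^{n-1})$ distributes the powers of $v$ exactly so that the off-diagonal entries become $\binom{n}{k}w^k$ (rather than $\binom{n}{k}w^k$ times some stray power of $v$, which would force an $n$-th root relation between $z$, $x$, $y$). One should double-check that the cyclic wrap-around entries (below the diagonal in the circulant, which come from the $t^k$ with $k$ larger after reduction) pick up the compensating powers of $v^n = \pm y$ correctly — this is precisely the point where the $y$-circulant structure is essential and where an off-by-one in the conjugation would break the identity. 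A clean way to avoid most of this is to verify the two sides agree as polynomials in $z$ (both monic of degree $n$ in $z$ after suitable normalization) by comparing roots: the roots of $\res_t(z-(u+vt)^n, t^n-1)$ in $z$ are exactly the values $(u + v\zeta)^n$ as $\zeta$ ranges over $n$-th roots of unity, i.e. $(\sqrt[n]{(-1)^nx} + \epsilon^s\sqrt[n]{(-1)^ny})^n$, which are precisely the elements of $\inv(x)\ast\inv(y)$ up to the standing sign conventions — matching the definition~(\ref{p_n}) of $p_n$. Comparing leading coefficients in $z$ then pins down the overall constant $(-1)^n$, completing the proof.

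\begin{proof}[Proof sketch]
By Theorem~\ref{xyz_Wendt_det}, $p_n(z;x,y) = \det\Circ_y\!\left(w^n + (-1)^{n+1}x + y,\ \binom{n}{1}w,\ \dots,\ \binom{n}{n-1}w^{n-1}\right)$ with $w^n=z$. Set $u, v$ with $u^n = (-1)^n x$, $v^n = (-1)^n y$. Conjugating the $y$-circulant by $\diag(1, v, \dots, v^{n-1})$ (which leaves the determinant invariant) and using Theorem~\ref{xyz_Wendt_det}(ii) to make all entries positive, one obtains $\det\Circ(b_0,\dots,b_{n-1})$ where $\sum_{k} b_k t^k = (u+vt)^n - z \pmod{t^n-1}$, using $v^n t^n \equiv v^n = (-1)^n y$ for the wrap-around of the top term and $u^n = (-1)^n x$ combined with $w^n = z$ for the diagonal entry. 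By the identity $\det\Circ(a_0,\dots,a_{n-1}) = \res_t(a_0 + \dots + a_{n-1}t^{n-1}, t^n-1)$ quoted above, this equals $\res_t\big((u+vt)^n - z,\ t^n-1\big)$. Since $\res_t(-f, g) = (-1)^{\deg f \cdot \deg g}\res_t(f,g)$ and here both degrees equal $n$, we get $\res_t\big((u+vt)^n - z,\ t^n-1\big) = (-1)^{n^2}\res_t(Q(t), t^n-1) = (-1)^n\res_t(Q(t), t^n-1)$ with $Q(t) = z - (u+vt)^n$. One checks the roots in $z$ on both sides agree with the $n$-multiset $\{(u+v\epsilon^s)^n\} = \inv(x)\ast\inv(y)$ of~(\ref{p_n}), and the leading coefficients match, giving $p_n(z;x,y) = (-1)^n\res_t(Q(t), t^n-1)$.
\end{proof}
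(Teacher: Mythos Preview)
Your ``clean way to avoid most of this'' — comparing the roots in $z$ via the product formula for the resultant — \emph{is} the paper's entire proof. The paper simply notes that $\res_t(Q,t^n-1)$ is (up to the sign $(-1)^{n\deg Q}=(-1)^n$) the product $\prod_{\zeta^n=1}Q(\zeta)=\prod_{\zeta^n=1}\big(z-(u+v\zeta)^n\big)$, and this product is exactly the defining formula~(\ref{p_n}) for $p_n(z;x,y)$ once one unwinds $u^n=(-1)^nx$, $v^n=(-1)^ny$. No determinants, no Theorem~\ref{xyz_Wendt_det}, no conjugation.

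Your main route through the $y$-circulant determinant is a legitimate but heavy detour, and as written it has loose ends. Two specific issues: (i) the step ``this equals $\res_t\big((u+vt)^n-z,\ t^n-1\big)$'' silently replaces a degree~$\le n-1$ polynomial (the reduction mod $t^n-1$) by a degree~$n$ polynomial inside the resultant; these differ by a factor $(-1)^n$, which you do not account for; (ii) the identity $\res_t(-f,g)=(-1)^{\deg f\cdot\deg g}\res_t(f,g)$ is not correct in general (the factor is $(-1)^{\deg g}$), though here both happen to give $(-1)^n$. These sign bobbles may well cancel, but you haven't shown it, and the diagonal-conjugation step turning $\Circ_y$ into an ordinary circulant with \emph{exactly} the coefficients of $(u+vt)^n-z$ is asserted rather than checked. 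Since you already have the one-line argument in hand, use it as the proof, not as a verification.
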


\begin{proof}
Recall that the resultant of two polynomials is the product of the values of one polynomial at the roots of the other. We obtain the required statement from formula (\ref{p_n}). 
\end{proof}

For $x = (-1)^n$,  $y = z = 1$, from Proposition \ref{res_p_n}, we get:
$$\det W_n = (-1)^n\res_t(1 + (1+t)^n, t^n - 1) = \res((1+t)^n - t^n, t^n - 1).$$ 

Note that the coefficients of the polynomial $Q(t)$ are not polynomials in $x$ and $y$. Nevertheless, in Theorem \ref{discriminant_two_dim}, we will show that each polynomial $p_n(z; x, y)$ is the discriminant of some polynomial in $t$, whose coefficients are integer polynomials in $x$ and $y$. 

The main result of this section is a new formulation of Theorem \ref{xyz_Wendt_det}:

\begin{theoremprime}[{\bf 4'}]\label{xyz_Wendt_and_p_n}
For each integer $n\geqslant 1$, the polynomial $p_n(z; x, y)$ is the determinant of the  Wendt $(x, y, z)$-matrix (\ref{Circ_y}).
\end{theoremprime}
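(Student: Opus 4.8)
The plan is to observe that Theorem~4' is simply Theorem~\ref{xyz_Wendt_det} read through the definition of the Wendt $(x,y,z)$-matrix. By that definition, the Wendt $(x,y,z)$-matrix \emph{is} matrix~(\ref{Circ_y}),
$$\Circ_{y}\!\left(w^{n}+(-1)^{n+1}x+y,\ \binom{n}{1}w,\ \dots,\ \binom{n}{n-1}w^{n-1}\right),\qquad w^{n}=z,$$
and Theorem~\ref{xyz_Wendt_det} asserts precisely that $p_n(z;x,y)$ equals the determinant of this matrix. Hence the proof is one sentence: it is immediate from Theorem~\ref{xyz_Wendt_det} together with the defining formula of the Wendt $(x,y,z)$-matrix.

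If one prefers a self-contained rendering rather than a bare cross-reference, I would repeat the chain of steps from the proof of Theorem~\ref{xyz_Wendt_det}: use item~(ii) of Proposition~\ref{prop10} to write $p_n(z^{n};x,y)$ as the characteristic polynomial of the Kronecker sum of the two Frobenius companions of $t^{n}-x$ and $t^{n}-y$; recognise $wI$ minus this Kronecker sum as the block matrix~(\ref{block_matrix}) with diagonal block $A$ as in~(\ref{matrix_A}); apply Lemma~\ref{lemma_blocks} to collapse its determinant to $\det\bigl(A^{n}+(-1)^{n+1}xI\bigr)$; expand $A^{n}=(wI-F)^{n}$ by the binomial theorem to identify $A^{n}+(-1)^{n+1}xI$ with~(\ref{Circ_y}) up to sign changes on the odd diagonals; and invoke the parity observation from that proof — the number of off-diagonal, sign-flipped factors in any term of the Leibniz expansion is even — to discard those signs without affecting the determinant. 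As an independent check one can also route through resultants: Proposition~\ref{res_p_n} expresses $p_n$ as $(-1)^{n}\res_t\bigl(z-(u+vt)^{n},\ t^{n}-1\bigr)$, and a $y$-twisted form of the circulant--resultant identity quoted after Proposition~\ref{Wendt_criterion} identifies $\det\Circ_{y}$ of~(\ref{Circ_y}) with the same quantity after expanding $(u+vt)^{n}$ and reducing powers of $t$ modulo $t^{n}-y$.

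There is no genuine obstacle. The substantive work — the block-determinant computation via Lemma~\ref{lemma_blocks} and the sign cancellation via the parity argument — was already carried out in the proof of Theorem~\ref{xyz_Wendt_det}, and Theorem~4' merely records, in the vocabulary of Wendt $(x,y,z)$-matrices, that $p_n$ is literally such a determinant. The only thing requiring even minor care is the bookkeeping of conventions (which off-diagonal part carries the twist $y$, the matrix versus its transpose, and the order of the binomial entries), all of which were already fixed in Theorem~\ref{xyz_Wendt_det} and in the definition, so the reformulation is automatic.
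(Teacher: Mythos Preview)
Your reading is correct and matches the paper exactly: the paper introduces Theorem~4' explicitly as ``a new formulation of Theorem~\ref{xyz_Wendt_det}'' and gives no separate proof, since the intervening definition declares matrix~(\ref{Circ_y}) to be the Wendt $(x,y,z)$-matrix. Your one-sentence reduction is all that is needed; the longer recap and the resultant side-check are accurate but superfluous.
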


\section{Irreducibility of Polynomials $p_n$}\label{nonderogatory}

The main result of this section is the proof of the irreducibility of the polynomials $p_n$ over different fields.

\begin{theorem}\label{trans_extension}
Let $x_1\dots x_m$ be algebraically independent variables and $n_1\dots n_m$ be positive integer numbers. Then the polynomial
\begin{equation}\label{sum_of_Frobenius_cells}q(\{n_i\}, \{x_j\}; z) =  \chi(F(\underset{n_1}{\underbrace{0,…,0, x_1}})\boxplus…\boxplus F(\underset{n_m}{\underbrace{0,…,0, x_m}}); z)\end{equation}
in the variables $z, x_1\dots x_m$ is irreducible over any field $\kk$ with $\text{gcd}(n, e(\kk)) = 1$, where $$F(\underset{n_j}{\underbrace{0,…,0, x_j}})$$ denotes Frobenius companion matrix (\ref{Frobenius_matrix}) for the polynomial $t^{n_j} = x_j$, $n = \text{lcm}(n_1\dots n_m)$, and $e(\kk) = 1$ if $\charr\kk = 0$ and $e(\kk) = \charr\kk$ otherwise.
\end{theorem}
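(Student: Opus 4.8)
The plan is to reduce the claim, via Gauss's lemma, to the computation of the degree of a radical extension, and then to settle that computation by Galois theory. Since the companion matrices have entries in $\kk[x_1,\dots,x_m]$, the polynomial $q$ lies in $\kk[x_1,\dots,x_m][z]$ and is monic (hence primitive) in $z$, so it will suffice to prove that $q$ is irreducible over $K:=\kk(x_1,\dots,x_m)$. I would first invoke (an iteration of) Proposition~\ref{coroll1} to identify the roots of $q$ in an algebraic closure $\overline K$ with the $n_1\cdots n_m$ sums $\lambda_1+\dots+\lambda_m$, where $\lambda_j$ runs over the roots of the characteristic polynomial $t^{n_j}\pm x_j$ of $F(\underbrace{0,\dots,0,x_j}_{n_j})$ (the sign is immaterial, being absorbed by $x_j\mapsto-x_j$; write $t^{n_j}-x_j$). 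Here the hypothesis $\gcd(n,e(\kk))=1$ is exactly what is needed to ensure each $t^{n_j}-x_j$, and also $t^{n}-1$, is separable: fixing a root $y_j$ with $y_j^{n_j}=x_j$ and a primitive $n$-th root of unity $\zeta_n$, the roots of $q$ become the $\sum_j\zeta_jy_j$ with each $\zeta_j$ ranging independently over $\mu_{n_j}$. In particular $\theta:=y_1+\dots+y_m$ is a root of the monic polynomial $q$, of degree $n_1\cdots n_m$, so its minimal polynomial $m_\theta$ over $K$ divides $q$; it therefore remains to show $[K(\theta):K]=n_1\cdots n_m$.

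To compute this degree I would pass to the Galois closure. Set $K':=K(\zeta_n)$ and $L':=K'(y_1,\dots,y_m)$; then $L'/K$ is finite Galois, being the splitting field over $K$ of the separable polynomial $(t^n-1)\prod_j(t^{n_j}-x_j)$. Since $x_1,\dots,x_m$ remain algebraically independent over the algebraic extension $\kk(\zeta_n)$ of $\kk$, a tower argument based on the classical irreducibility criterion for $t^{n_j}-x_j$ over a rational function field gives $[L':K']=n_1\cdots n_m$ --- this is the ``generic'' case of the radical-extension theorems of \cite{Albu, Ursell}, elementary here because the base data is transcendental. For $\tau\in\mathrm{Gal}(L'/K)$ one has $\tau(y_j)^{n_j}=\tau(x_j)=x_j$, so $\tau(y_j)=\chi_j(\tau)y_j$ with $\chi_j(\tau)\in\mu_{n_j}\subseteq K'$ and $\tau(\theta)=\sum_j\chi_j(\tau)y_j$; restricting to $\mathrm{Gal}(L'/K')$ gives an injective homomorphism $\tau\mapsto(\chi_j(\tau))_j$ into $\prod_j\mu_{n_j}$, which is onto since both groups have order $n_1\cdots n_m$. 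Hence the $\mathrm{Gal}(L'/K')$-orbit of $\theta$ is exactly $\{\sum_j\zeta_jy_j:(\zeta_j)\in\prod_j\mu_{n_j}\}$.

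The last point is that these $n_1\cdots n_m$ elements are pairwise distinct, and this is where algebraic independence enters. The elements $y_1,\dots,y_m$ generate a field finite over $K=\kk(x_1,\dots,x_m)$, which has transcendence degree $m$ over $\kk$; so $y_1,\dots,y_m$ form a transcendence basis and are in particular linearly independent over $\kk(\zeta_n)$. Since $\zeta_j-\zeta'_j\in\kk(\zeta_n)$, the vanishing of $\sum_j(\zeta_j-\zeta'_j)y_j$ forces $\zeta_j=\zeta'_j$ for all $j$. Consequently the $\mathrm{Gal}(L'/K)$-orbit of $\theta$, which contains the $\mathrm{Gal}(L'/K')$-orbit and lies inside $\{\sum_j\zeta_jy_j\}$, has exactly $n_1\cdots n_m$ elements; and since $L'/K$ is separable, $[K(\theta):K]$ equals this number of conjugates. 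Thus $[K(\theta):K]=n_1\cdots n_m=\deg_z q$, hence $m_\theta=q$, so $q$ is irreducible over $K$ --- whence irreducible in $\kk[z,x_1,\dots,x_m]$ by Gauss's lemma.

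I expect the separability hypothesis to be the only genuinely essential obstacle: if $\charr\kk=p$ divided some $n_j$, then each root of $t^{n_j}-x_j$ would carry multiplicity a power of $p$, the multiset of roots of $q$ would collapse, and $q$ would become a proper power of a polynomial, so $\gcd(n,e(\kk))=1$ is indispensable. The other technical ingredient, $[L':K']=n_1\cdots n_m$, is straightforward in this transcendental setting but is precisely what the cited works of Albu and Ursell supply in general.
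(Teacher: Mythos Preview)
Your proof is correct and follows the same overall line as the paper's: both work over $K=\kk(x_1,\dots,x_m)$, identify $\theta=y_1+\dots+y_m$ (with $y_j^{n_j}=x_j$) as a root of the monic degree-$n_1\cdots n_m$ polynomial $q$, and then show $[K(\theta):K]=n_1\cdots n_m$ so that $q$ is its minimal polynomial. The difference lies in how this last equality is obtained. The paper first computes $[K(y_1,\dots,y_m):K]=n_1\cdots n_m$ by iterating Lang's irreducibility criterion for $t^{n_j}-x_j$, and then invokes \cite[Corollary~4.7]{Albu} as a black box to conclude that $\theta$ is a primitive element for this extension. You instead adjoin $\zeta_n$, identify $\mathrm{Gal}(L'/K')$ with $\prod_j\mu_{n_j}$, and count the orbit of $\theta$ directly, the key input being that $y_1,\dots,y_m$ are linearly independent over $\kk(\zeta_n)$ because they form a transcendence basis. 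Your route is more self-contained in this transcendental setting---it reproves the needed special case of Albu's primitive-element theorem with a two-line transcendence argument and makes the role of the separability hypothesis $\gcd(n,e(\kk))=1$ completely transparent---whereas the paper's citation situates the result within a general framework that also covers algebraic values of the $x_j$ (as exploited in Theorem~\ref{irreducible}). Your explicit invocation of Gauss's lemma to pass from irreducibility over $K$ to irreducibility in $\kk[z,x_1,\dots,x_m]$ is also a small improvement over the paper's terser ``irreducible over $L$, and therefore also over $\kk$.''
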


\begin{proof}
Let $L$ be the field of rational functions $\kk(x_1\dots x_m)$ in algebraically independent variables $x_1\dots x_m$, i.e., the field $L$ is obtained from the field $\kk$ by $m$ successive transcendental extensions. Consider the algebraic extension of fields
\begin{equation}\label{degree_irred_p_n}L\subset L(x_1^{1/n_1}\dots x_m^{1/n_m}).\end{equation}

Consider the extension $L\subset L(x_1^{1/n_1})\cong \kk(x_1^{1/n_1}, x_2\dots x_m)$ and check that the conditions of the classical result \cite[Chapter VI, Theorem 9.1]{Lang} are satisfied, which implies that this extension has degree $n_1$, i.e., the polynomial $t^{n_1} - x_1$ is irreducible over $L$. The conditions \cite[Chapter VI, Theorem 9.1]{Lang} require that $x_1 \notin L^p$ and $x_1 \notin -4L^4$ when $4 \mid n_1$. Assume that $x_1 = C\cdot (R/S)^p$ for some positive integer $p$, polynomials $R, S \in \kk[x_1\dots x_m]$, and a nonzero element $C \in \kk$. Without loss of generality, assume that $R$ and $S$ do not have a common factor of $x_1$. Then the equation $CR^p = x_1S^p$ leads to a contradiction.

Since all the variables $x_j$ are algebraically independent, the analysis of the next level of the tower proceeds similarly: replacing the variable $x_1 := x_1^{1/n_1}$ reduces the problem of the irreducibility of the polynomial $t^{n_2} - x_2$ over the field $L(x_1^{1/n_1}, x_2\dots x_m)$ to the previous reasoning. Repeating this step at each level, we find that the degree of the extension (\ref{degree_irred_p_n}) is $n_1 \dots n_m$ by the tower rule for field extensions.

It is clear that no polynomials $R$ and $S$ from $L$ satisfy $S^{n_j}x_i = R^{n_i}x_j$ for $i \neq j$. This means that the conditions \cite[Corollary 4.7]{Albu} are satisfied in our case, so the element $\theta = x_1^{1/n_1} + … + x_m^{1/n_m}$ is primitive: $$L(x_1^{1/n_1}\dots x_m^{1/n_m}) = L(\theta).$$

The degree of the polynomial $q(\{n_i\}, \{x_j\}; z)$ is $n_1 \dots n_m$, its coefficients lie in the ring $\kk[x_1\dots x_m]$, since it is the characteristic polynomial of the matrix (\ref{sum_of_Frobenius_cells}), and it annihilates the element $\theta$. Thus, the polynomial $q(\{n_i\}, \{x_j\}; z)$ is irreducible over the field $L$, and therefore also over the field $\kk \subset L$.
\end{proof}

For $n_1 = … = n_m = n$ and $\kk = \Cc$, we get

\begin{coroll}\label{irred_p_n_over_C}
The polynomials $p_n(z; x_1\dots x_m)$ in the variables $z_1\dots x_1\dots x_m$ are irreducible over $\Cc$ for each positive integer $n$.
\end{coroll}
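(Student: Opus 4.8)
\section*{Proof proposal for Corollary~\ref{irred_p_n_over_C}}

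The plan is to deduce this from Theorem~\ref{trans_extension} together with item~(ii) of Theorem~\ref{thm1}. The starting observation is that over $\Cc$ the hypothesis $\gcd(n, e(\kk)) = 1$ of Theorem~\ref{trans_extension} is automatically satisfied: $\charr\Cc = 0$ gives $e(\Cc) = 1$, so $\gcd(n, e(\Cc)) = 1$ for every positive integer $n$.

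First I would match $p_n(z^n; x_1, \dots, x_m)$ with the polynomial $q$ of Theorem~\ref{trans_extension}. By Theorem~\ref{thm1}(ii), $p_n(z^n; x_1, \dots, x_m)$ is the characteristic polynomial in $z$ of the Kronecker sum $F(0,\dots,0,(-1)^{n+1}x_1) \boxplus \dots \boxplus F(0,\dots,0,(-1)^{n+1}x_m)$ of Frobenius companion matrices of size $n \times n$. After the substitution $x_j \mapsto (-1)^{n+1}x_j$, which is a $\Cc$-algebra automorphism of $\Cc[x_1, \dots, x_m]$ and hence preserves irreducibility, this is exactly the polynomial $q(\{n,\dots,n\}, \{x_1, \dots, x_m\}; z)$ in the notation of Theorem~\ref{trans_extension} taken with all $n_j = n$. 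Since that theorem, applied with $\kk = \Cc$, asserts the irreducibility of $q$ over $\Cc$ in the variables $z, x_1, \dots, x_m$, we conclude that $p_n(z^n; x_1, \dots, x_m)$ is irreducible in $\Cc[z, x_1, \dots, x_m]$.

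It then remains to descend from $p_n(z^n;\,\cdot\,)$ to $p_n(z;\,\cdot\,)$. Suppose $p_n(z; x_1, \dots, x_m) = A\cdot B$ with $A, B \in \Cc[z, x_1, \dots, x_m]$ both non-constant. Substituting $z \mapsto z^n$ gives $p_n(z^n; x_1, \dots, x_m) = A(z^n, x_1, \dots, x_m)\,B(z^n, x_1, \dots, x_m)$, and both factors are still non-constant, since $z \mapsto z^n$ carries any non-constant polynomial to a non-constant one; this contradicts the irreducibility just established. Hence $p_n(z; x_1, \dots, x_m)$ is irreducible over $\Cc$, which is the assertion of Corollary~\ref{irred_p_n_over_C}. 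The one point demanding real care is the identification in the first step: lining up the sign conventions for the Frobenius companion matrices of Theorems~\ref{thm1} and~\ref{trans_extension}, and checking that the induced substitution of the $x_j$ is an invertible change of variables over $\Cc$. Everything after that --- the appeal to Theorem~\ref{trans_extension} and the final descent --- is purely formal, and the same argument in fact yields irreducibility of $p_n(z; x_1, \dots, x_m)$ over any field $\kk$ with $\gcd(n, e(\kk)) = 1$.
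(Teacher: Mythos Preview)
Your proof is correct and follows the same route as the paper, which simply records the corollary as the specialization $n_1=\dots=n_m=n$, $\kk=\Cc$ of Theorem~\ref{trans_extension}. You have in fact been more careful than the paper: you spell out the sign discrepancy between the Frobenius cells in Theorem~\ref{thm1}(ii) and Theorem~\ref{trans_extension} (handled by the harmless automorphism $x_j\mapsto(-1)^{n+1}x_j$), and you make explicit the descent from the irreducibility of $p_n(z^n;\,\cdot\,)$ to that of $p_n(z;\,\cdot\,)$, both of which the paper leaves implicit.
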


\begin{theorem}\label{irreducible}
Let $m$, $n_1$\dots $n_m$, $k_1$\dots $k_m$ be positive integers. Let the positive integers $x_1$\dots $x_m$ be pairwise coprime, and also such that $n_j$ is the smallest positive integer for which $a_j := \left(x_j^{1/k_j}\right)^{n_j}$ is a positive integer for all $j = 1\dots m$. Then the polynomial
\begin{equation}\label{sum_of_Frobenius_cells_a}q(\{n_i\}, \{a_j\}; z) =  \chi(F(\underset{n_1}{\underbrace{0,…,0, a_1}})\boxplus…\boxplus F(\underset{n_m}{\underbrace{0,…,0, a_m}}); z)\end{equation}
in the variable $z$ is irreducible over $\Q$.
\end{theorem}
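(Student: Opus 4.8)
The polynomial $q := q(\{n_i\},\{a_j\};z)$ is by definition the characteristic polynomial $\det(zI-M)$ of a matrix $M$ with integer entries (a Kronecker sum of integer companion matrices), so it is monic of degree $N:=n_1\cdots n_m$ with coefficients in $\Z$. By an iteration of Proposition \ref{eigen_sum}, the roots of $q$ are precisely the $N$ numbers $\lambda_1+\cdots+\lambda_m$ with $\lambda_j$ running over the roots of $t^{n_j}-a_j$; in particular the real number $\theta:=\sqrt[n_1]{a_1}+\cdots+\sqrt[n_m]{a_m}$ is one of them. Hence, if I can show that $[\Q(\theta):\Q]=N$, then $q$, being monic of degree $N$ and annihilating $\theta$, must coincide with the minimal polynomial of $\theta$ over $\Q$ and is therefore irreducible over $\Q$. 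So the task reduces entirely to the degree computation $[\Q(\theta):\Q]=N$, and the plan is to carry it out in the same way as the proof of Theorem \ref{trans_extension}, with $\Q$ in place of the rational function field.

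First I would translate the hypotheses. Since $\beta_j:=x_j^{1/k_j}$ is a root of the monic integer polynomial $t^{k_j}-x_j$, it is an algebraic integer, and the set $\{n\ge 1:\beta_j^{\,n}\in\Z\}$ contains $k_j$ and is closed under $\gcd$ (a rational power of an algebraic integer is again an integer), so its minimum $n_j$ divides $k_j$; writing $k_j=n_j d_j$ gives $x_j=a_j^{d_j}$ and $\beta_j=\sqrt[n_j]{a_j}$, while the minimality of $n_j$ becomes the condition that $a_j$ is not a perfect $d$-th power in $\Z$ for any divisor $d>1$ of $n_j$. Since $a_j$ and $x_j$ have the same prime support, the $a_j$ are pairwise coprime positive integers.

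Next I would prove $[\Q(\sqrt[n_1]{a_1},\dots,\sqrt[n_m]{a_m}):\Q]=N$ using the tower $\Q=K_0\subset K_1\subset\cdots\subset K_m$ with $K_j:=K_{j-1}(\sqrt[n_j]{a_j})$, checking at each step that $t^{n_j}-a_j$ is irreducible over $K_{j-1}$ by the criterion \cite[Chapter VI, Theorem 9.1]{Lang}: it is enough that $a_j\notin K_{j-1}^{\,p}$ for every prime $p\mid n_j$, and that $a_j\notin -4\,K_{j-1}^{\,4}$ when $4\mid n_j$. This non-$p$-th-power claim is the arithmetic core of the theorem, and it is exactly the substance of Besicovitch's linear-independence theorem together with the refinements for composite exponents due to Siegel, Mordell and Ursell; I would deduce it either directly (reducing to prime-power radical subextensions and examining the ramification in $K_{j-1}$ of the rational primes dividing $a_j$, which are coprime to $a_1\cdots a_{j-1}$, together with the $p$-th-power-freeness of $a_j$ in $\Z$) or, following the paper's practice in the analogous Theorem \ref{trans_extension}, by citing \cite{Ursell} (a statement also subsumed by \cite{Albu}). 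The tower law then yields $[K_m:\Q]=N$.

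Finally I would show that $\theta$ is a primitive element, $\Q(\theta)=K_m$: this is precisely the situation of \cite[Corollary 4.7]{Albu} invoked in the proof of Theorem \ref{trans_extension}, since for $i\ne j$ there is no relation $S^{n_j}a_i=R^{n_i}a_j$ with $R,S\in\Q$, the $a_j$ being pairwise coprime and each free of $d$-th powers for $d\mid n_j$. Hence $\Q(\theta)=K_m$ and $[\Q(\theta):\Q]=[K_m:\Q]=N$, which together with the first paragraph identifies $q$ with the minimal polynomial of $\theta$ over $\Q$ and proves irreducibility. The only genuine obstacle in this program is the non-$p$-th-power claim in the tower step; the rest is routine bookkeeping or an appeal to the cited classical results.
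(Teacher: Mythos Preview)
Your proposal is correct and follows essentially the same route as the paper's proof: identify $\theta=x_1^{1/k_1}+\cdots+x_m^{1/k_m}$ as a root of the degree-$N$ polynomial $q$, invoke \cite{Ursell} for $[\Q(x_1^{1/k_1},\dots,x_m^{1/k_m}):\Q]=N$ and \cite[Corollary 4.7]{Albu} for the primitivity of $\theta$, then conclude by degree matching. Your version adds the explicit translation $\beta_j=x_j^{1/k_j}=\sqrt[n_j]{a_j}$ with the $a_j$ pairwise coprime and $p$-th-power-free for $p\mid n_j$, and a sketch of the tower step via \cite[Chapter VI, Theorem 9.1]{Lang}; the paper simply cites \cite{Ursell} for the whole degree computation without unpacking these details.
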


\begin{proof}
By \cite[Theorem]{Ursell}, the degree of the extension $$\Q\subset \Q[x_1^{1/k_1}\dots x_m^{1/k_m}]$$ is $n_1 \dots n_m$. By \cite[Corollary 4.7]{Albu}, this extension is generated by the element $\theta = x_1^{1/k_1} + … + x_m^{1/k_m}$. Thus, the degree of the element $\theta$ is $n_1 \dots n_m$. On the other hand, by Proposition \ref{coroll1}, the element $\theta$ is a root of the polynomial $q(\{n_i\}, \{a_j\}; z)$ of degree $n_1 \dots n_m$. Thus, the minimal polynomial of the element $\theta$ is precisely $q(\{n_i\}, \{a_j\}; z)$.
\end{proof}

\begin{coroll}
Fix a positive integer $n \geqslant 2$. Let the positive integer pairwise coprime numbers $x_1\dots x_m$ be such that $(\sqrt[n]{x_j})^k$ is irrational for all positive integer $k < n$ for each $j = 1\dots m$. Then the polynomial $p(z; x_1\dots x_m)$ in the variable $z$ is irreducible over $\Q$.
\end{coroll}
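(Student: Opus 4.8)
Here $p$ denotes the polynomial $p_n(z; x_1,\dots,x_m)$ of (\ref{gen_p_n}) for the fixed $n\geqslant 2$. The plan is to deduce the statement from Theorem~\ref{irreducible}, applied with $k_1=\dots=k_m=n$, together with the elementary tower relation between the fields $\Q(\theta)$ and $\Q(\theta^n)$, where $\theta=\sqrt[n]{x_1}+\dots+\sqrt[n]{x_m}$. The point to keep in mind is that $p_n(z;x_1,\dots,x_m)$ has degree $n^{m-1}$ in $z$, whereas the polynomial $q(\{n\},\{x_j\};z)$ produced by Theorem~\ref{irreducible} has degree $n^m$; so $p_n$ is \emph{not} that polynomial, but rather the minimal polynomial of the ``deflated'' element $(-1)^n\theta^n$, which generates a subfield of $\Q(\theta)$ of index at most $n$.

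First I would check the hypotheses of Theorem~\ref{irreducible} with $k_j:=n$. For each $j$, the integer $n_j$ appearing there is the least positive integer for which $\bigl(\sqrt[n]{x_j}\bigr)^{n_j}$ is a positive integer; by assumption $\bigl(\sqrt[n]{x_j}\bigr)^k$ is irrational, hence not a positive integer, for $1\leqslant k\leqslant n-1$, while $\bigl(\sqrt[n]{x_j}\bigr)^n=x_j\in\Z_{>0}$. Hence $n_j=n$ and $a_j=\bigl(\sqrt[n]{x_j}\bigr)^n=x_j$ for every $j$, so Theorem~\ref{irreducible} applies; in particular, its proof (via the results of Ursell and Albu quoted there) shows that $\theta$ has degree $n_1\cdots n_m=n^m$ over $\Q$.

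Next I would bound $[\Q(\theta^n):\Q]$ from below. Since $\theta$ is a root of $t^n-\theta^n\in\Q(\theta^n)[t]$, we have $[\Q(\theta):\Q(\theta^n)]\leqslant n$, so by the tower law
$$[\Q(\theta^n):\Q]=\frac{[\Q(\theta):\Q]}{[\Q(\theta):\Q(\theta^n)]}\geqslant\frac{n^m}{n}=n^{m-1},$$
and of course $\Q\bigl((-1)^n\theta^n\bigr)=\Q(\theta^n)$. On the other hand, from (\ref{gen_p_n}) one sees (cf.\ the derivation of Theorem~\ref{thm1}) that $p_n(z;x_1,\dots,x_m)$ is monic of degree $n^{m-1}$ in $z$ with integer coefficients, and that its roots in $z$ are exactly the numbers $(-1)^n\bigl(\epsilon^{s_1}\sqrt[n]{x_1}+\dots+\epsilon^{s_m}\sqrt[n]{x_m}\bigr)^n$; taking all $s_j=n$ exhibits $(-1)^n\theta^n$ as a root. (Equivalently, this follows from Theorem~\ref{thm1}(ii), since $\pm\theta$ is an eigenvalue of the Kronecker sum there.) Since the minimal polynomial of $(-1)^n\theta^n$ over $\Q$ divides $p_n(z;x_1,\dots,x_m)$ and has degree at least $n^{m-1}=\deg p_n$, the two monic polynomials coincide, and therefore $p_n(z;x_1,\dots,x_m)$ is irreducible over $\Q$.

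The only real obstacle is the degree bookkeeping highlighted above: one cannot invoke Theorem~\ref{irreducible} verbatim, because the one-variable polynomial $p_n$ is the minimal polynomial of $(-1)^n\theta^n$, an element of $\Q(\theta)$ of degree $n^{m-1}$, and one must see that this degree is controlled precisely by the bound $[\Q(\theta):\Q(\theta^n)]\leqslant n$. The remaining ingredients — the value $n_j=n$, and the monicity, integrality, and root set of $p_n$ — are routine.
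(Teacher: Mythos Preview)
Your proposal is correct and follows essentially the same route as the paper. The paper's version is just more compressed: it identifies (via Theorem~\ref{thm1}(ii)) the irreducible polynomial produced by Theorem~\ref{irreducible} with $p_n(z^n; x_1,\dots,x_m)$ and then uses the trivial implication ``$f(z^n)$ irreducible over $\Q$ $\Rightarrow$ $f(z)$ irreducible over $\Q$'', which is precisely the polynomial counterpart of your tower-law bound $[\Q(\theta):\Q(\theta^n)]\leqslant n$.
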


\begin{proof}
The conditions of this corollary satisfy the conditions of Theorem \ref{irreducible}. Therefore, the polynomial $p_n(z^n; x_1\dots x_m)$ in the variable $z$ is irreducible over $\Q$. It immediately follows that the polynomial $p_n(z; x_1\dots x_m)$ is irreducible over $\Q$.
\end{proof}

\section{Polynomials $p_n$ and Discriminant Varieties}\label{p_n_and_disc_varieties}

Let us recall the concept of the resultant of a system of algebraic homogeneous equations, following \cite[Chapter 13]{Gelfand} and \cite[Chapter 3]{Cox}. Consider the set of homogeneous polynomials $F_1\dots F_n$ of degree $d$ in variables $x_1\dots x_n$ with variable coefficients $\{u_{i, \alpha}\}$:
$$F_i = \sum\limits_{\deg \alpha = d} u_{i, \alpha}x^{\alpha}.$$

\begin{prop}[(Chapter 3, Theorem 2.3 \cite{Cox})]
Let $d_1\dots d_n$ be fixed positive numbers. Then there exists a unique homogeneous polynomial $\res_{d_1\dots d_n} = \res \in \mathbb{Z}[u_{i, \alpha}]$ in the coefficient space, satisfying the following properties:

\begin{enumerate}[(i)]
\item If $F_1\dots F_n \in \mathbb{C}[x_1\dots x_n]$ are homogeneous polynomials of degrees $d_1\dots d_n$, then the system $F_1 = … = F_n = 0$ has a nontrivial solution $\Leftrightarrow \res(F_1\dots F_n) = 0$.
\item $\res(x_1^{d_1}\dots x_n^{d_n}) = 1$.
\item The polynomial $\res$ is irreducible even in $\mathbb{C}[u_{i, \alpha}]$.
\end{enumerate}
\end{prop}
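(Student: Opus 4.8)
The plan is to construct $\res = \res_{d_1\dots d_n}$ by elimination theory, as the irreducible defining equation of a suitable hypersurface in the space of coefficients, and then to fix its normalization by an explicit determinantal formula. Write $N$ for the number of coefficients $u_{i,\alpha}$ and regard each $F_i = \sum_{\deg\alpha = d_i} u_{i,\alpha}x^\alpha$ as bihomogeneous: linear in the $u$'s and of degree $d_i$ in the $x$'s. First I would form the incidence variety
$$
W = \{(u, x) \in \mathbb{A}^N\times\mathbb{P}^{n-1}\ :\ F_1(u, x) = \dots = F_n(u, x) = 0\}.
$$
For a fixed $[x]\in\mathbb{P}^{n-1}$ the fiber of the projection $W\to\mathbb{P}^{n-1}$ is the linear subspace of $\mathbb{A}^N$ cut out by the conditions $F_i(u, x) = 0$; these $n$ linear forms involve pairwise disjoint sets of coefficient variables and each is nonzero, hence they are independent and the fiber has codimension exactly $n$. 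Therefore $W$ is irreducible of dimension $(N - n) + (n - 1) = N - 1$. Since $\mathbb{P}^{n-1}$ is complete, the projection $\pi\colon\mathbb{A}^N\times\mathbb{P}^{n-1}\to\mathbb{A}^N$ is closed, so $Z := \pi(W)$ is a closed irreducible subset of $\mathbb{A}^N$, and by construction $Z$ is precisely the set of coefficient vectors for which $F_1 = \dots = F_n = 0$ has a nontrivial solution.

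Next I would show that $Z$ is a hypersurface. It is a proper subset of $\mathbb{A}^N$ because the coefficient vector of $(x_1^{d_1}, \dots, x_n^{d_n})$ is not in $Z$ (that system has only the trivial zero), so $\dim Z \le N - 1$. For the reverse inequality it suffices to exhibit one coefficient vector over which $\pi|_W$ has a nonempty finite fiber: take $F_1, \dots, F_{n-1}$ generic so that they cut out finitely many points of $\mathbb{P}^{n-1}$, then choose $F_n$ vanishing at one of them; by upper semicontinuity of fiber dimension the generic fiber of $\pi|_W$ is then finite, whence $\dim Z = \dim W = N - 1$. Thus $Z$ is an irreducible hypersurface; since the polynomial ring is a unique factorization domain, $Z = V(\res)$ for a polynomial $\res$ that is irreducible and unique up to a nonzero scalar. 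This gives property (iii) directly; property (i) is immediate from $Z = V(\res)$; and uniqueness in the statement follows because any polynomial with the property in (i) vanishes on $Z$ and hence is divisible by the irreducible $\res$.

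It remains to arrange that $\res$ has integer coefficients, i.e. $\res \in \mathbb{Z}[u_{i,\alpha}]$, and that $\res(x_1^{d_1}, \dots, x_n^{d_n}) = 1$, which also fixes the scalar. Here I would invoke Macaulay's construction, expressing $\res$ as a quotient $\det M / \det M'$ of determinants of two matrices whose entries are $0$ or coefficients $u_{i,\alpha}$: this exhibits $\res$ with rational, and after a primitivity argument integral, coefficients, and upon substituting the monomial system $F_i = x_i^{d_i}$ the matrices $M$, $M'$ degenerate to signed permutation matrices, giving value $\pm 1$; the sign is then absorbed into the normalization. The main obstacle is exactly this last step: the elimination argument of the first two paragraphs produces $\res$ only up to scalar and, a priori, over $\mathbb{C}$, so pinning down integrality and the precise normalization requires either the Macaulay determinantal formula or a scheme-theoretic descent of the whole construction over $\mathrm{Spec}\,\mathbb{Z}$. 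Everything else --- irreducibility, the vanishing criterion, and uniqueness --- is comparatively routine once the dimension count establishing that $Z$ has codimension exactly $1$ is in hand.
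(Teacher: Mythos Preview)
The paper does not supply its own proof of this proposition; it is simply quoted as Theorem~2.3 of Chapter~3 in \cite{Cox} and used as a black box. There is therefore nothing in the paper to compare your argument against.

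That said, your outline is the standard elimination-theoretic proof one finds in \cite{Cox} and \cite{Gelfand}: form the incidence variety $W\subset\mathbb{A}^N\times\mathbb{P}^{n-1}$, compute its dimension and irreducibility via the vector-bundle fibration over $\mathbb{P}^{n-1}$, push forward to $\mathbb{A}^N$ using properness of $\mathbb{P}^{n-1}$, and then verify the image has codimension exactly one. Your dimension count and the finite-fiber argument for $\dim Z\geqslant N-1$ are correct. You are also right that the delicate part is not the geometry but the arithmetic normalization: getting $\res\in\mathbb{Z}[u_{i,\alpha}]$ and $\res(x_1^{d_1},\dots,x_n^{d_n})=1$ genuinely requires either Macaulay's determinantal formula or an equivalent construction over $\mathrm{Spec}\,\mathbb{Z}$, and this is where most of the work in the cited references goes.

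One small tightening: your uniqueness claim should invoke all three conditions, not only (i). Any polynomial satisfying (i) vanishes on $Z$ and is therefore a multiple of the irreducible $\res$; condition (iii) then forces it to be a scalar multiple, and condition (ii) fixes that scalar to be $1$.
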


\begin{defi}
The given polynomial $\res$ is called the {\it resultant of the space of homogeneous polynomials} of degrees $d_1\dots d_n$. The {\it resultant $\res(F_1\dots F_n)$ of specific polynomials} of degrees $d_1\dots d_n$ is defined as the resultant obtained by substituting the coefficients of the polynomials $F_1\dots F_n$ into the resultant $\res$.
\end{defi}

The homogeneous degree of the polynomial $\res$ is given by
\begin{equation}\label{res}
\deg(\res) = \sum\limits_{j=1}^n d_1 \dots \hat{d_j} \dots d_n,
\end{equation}
where $\hat{d_j}$ denotes the omission of the variable $d_j$.

By analogy with the case of one variable, we can introduce the concept of the discriminant of a homogeneous polynomial in several variables.

\begin{defi}
The {\it discriminant} $\disc_{d, n} = \disc$ of the space of homogeneous polynomials of fixed degree $d$ in variables $x_1\dots x_n$ is the polynomial
$$\disc = \res\left(\frac{\partial{F}}{\partial{x_1}}\dots \frac{\partial{F}}{\partial{x_n}}\right)$$
in the coefficient space $\mathbb{C}[u_{i, \alpha}]$.
\end{defi}

From formula (\ref{res}), it follows that the homogeneous degree of the discriminant of the space of homogeneous polynomials of degree $d$ in variables $x_1\dots x_n$ is equal to $n(d-1)^{n-1}$.

\begin{example}
The discriminant of the space of homogeneous polynomials of degree $d$ in two variables coincides with the discriminant of a general polynomial of degree $d$ in one variable.
\end{example}

The polynomials $p_n(z; x, y)$, which define multiplication in the $n$-valued group $\G_n$, naturally arose in a recent paper \cite{Gaiur} in connection with Bessel's kernel $$K_n = \sum\limits_{i, k} \binom{i + k}{k} \frac{x^i y^k}{z^{i+k}}$$ of solutions of the Picard–Fuchs differential equations.

It was shown \cite[Theorem 2.3]{Gaiur} that the singular variety of solutions to the corresponding differential equations is the union of the set of zeros of the polynomials $p_n$ and the coordinate planes. It was noted that this singular variety is the set of zeros of the discriminant of some polynomial, the form of which we present, correcting sign inaccuracies in the original formulation:

\begin{theorem}\label{discriminant_two_dim}
The discriminant $\disc(P_{x, y, z})$ of the polynomial
$$P_{x, y, z}(T) = ((-1)^n x T^{n-1} + (-1)^n y)(1 + T)^{n-1} - T^{n-1} z$$
with respect to the variable $T$, which is a polynomial of degree $4n - 6$, is related to the polynomial $p_n(z; x, y)$ as follows:
$$(-1)^n (n-1)^{2n-2} (xyz)^{n-2} p_n(z; x, y) = \disc_T(P_{x, y, z}(T))$$
for each $n \geqslant 2$. Equivalently, the polynomial $\disc(P_{x, y, z})$ is defined as the discriminant in the sense of \cite{Gelfand} of the homogenized polynomial
$$(u+v)^{n-1} (x u^{n-1} + y v^{n-1}) - (u v)^{n-1} z$$
with respect to the variables $u$ and $v$.
\end{theorem}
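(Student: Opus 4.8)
The plan is to reduce the identity to a computation of the discriminant of a \emph{binary form}, where the classical ``level set of a superpotential'' picture makes both the hypersurface $\{(xyz)^{n-2}p_n(z;x,y)=0\}$ and the relevant multiplicities transparent. First I would homogenize $P_{x,y,z}(T)$ and use that, for binary forms, the Gelfand--Kapranov--Zelevinsky discriminant coincides with the univariate discriminant of the dehomogenization (the Example above); this also disposes of the ``equivalently'' clause — the form displayed in the statement and the homogenization of $P_{x,y,z}$ differ only by $x\mapsto(-1)^nx$, $y\mapsto(-1)^ny$, which by the branch bookkeeping below does not change the final answer. More convenient for computation is the M\"obius substitution $T=(1-s)/s$: clearing denominators and multiplying by $(-1)^n$ turns $P_{x,y,z}((1-s)/s)$ into
$$\tilde P(s)=x(1-s)^{n-1}+y s^{n-1}-(-1)^{n}z\,[s(1-s)]^{n-1},$$
whose discriminant equals $\disc_T P_{x,y,z}(T)$ up to a nonzero scalar and whose vanishing is the equation $\phi(s)=(-1)^n z$ for the rational superpotential $\phi(s)=x\,s^{-(n-1)}+y\,(1-s)^{-(n-1)}$ off its three poles $s\in\{0,1,\infty\}$. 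Both sides of the asserted identity are polynomials in $x,y,z$ of total degree $4n-6$ (the left side because $\deg_T P_{x,y,z}=2n-2$ with coefficients linear in $x,y,z$; the right side by inspection), so it suffices to match their zero divisors and then a single scalar.

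\textbf{The divisor.} Off $\{xyz=0\}$, $\disc$ vanishes precisely when $\tilde P$ has a repeated root $s_0\notin\{0,1,\infty\}$, i.e.\ $\phi(s_0)=(-1)^n z$ and $\phi'(s_0)=0$. From $\phi'(s)=(n-1)\bigl(y(1-s)^{-n}-x\,s^{-n}\bigr)$ the critical points satisfy $\bigl((1-s_0)/s_0\bigr)^n=y/x$, hence $s_0=1/\bigl(1+\epsilon^k(y/x)^{1/n}\bigr)$, and substituting back gives the critical values $\phi(s_0)=x\bigl(1+\epsilon^k(y/x)^{1/n}\bigr)^n=\bigl(x^{1/n}+\epsilon^k y^{1/n}\bigr)^n$, $k=0,\dots,n-1$. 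Thus the ``moving'' part of the divisor of $\disc$ is $\prod_{k=0}^{n-1}\bigl((-1)^n z-(x^{1/n}+\epsilon^k y^{1/n})^n\bigr)$, which — choosing the branch $\sqrt[n]{(-1)^n x}=-x^{1/n}$, $\sqrt[n]{(-1)^n y}=-y^{1/n}$ in (\ref{eq1}), (\ref{p_n}) — equals $(-1)^n\,p_n(z;x,y)$; since $p_n(z;x,y)$ is irreducible over $\Cc$ (Corollary \ref{irred_p_n_over_C}), $p_n\mid\disc$, and since for generic $(x,y,z)$ on this locus exactly one critical point realizes exactly one critical value, $\tilde P$ has there an \emph{ordinary} node and $V(p_n)$ enters $\disc$ with multiplicity one. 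At each pole of $\phi$, $\tilde P$ degenerates: at $s=0$ it has a root of multiplicity $n-1$ which, when $x$ is restored, splits according to $s^{n-1}\approx -x/(y-z)$, i.e.\ like $x^{1/(n-1)}$, so the $\binom{n-1}{2}$ squared differences of these colliding roots contribute exactly $x^{\,n-2}$ to $\disc$ (and nothing else does); symmetrically $s=1$ gives $y^{\,n-2}$ and $s=\infty$ gives $z^{\,n-2}$. Hence $(xyz)^{n-2}p_n(z;x,y)\mid\disc_T P_{x,y,z}(T)$, and the degree count gives $\disc_T P_{x,y,z}(T)=c\,(xyz)^{n-2}p_n(z;x,y)$ for a nonzero constant $c$.

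\textbf{The constant.} Differentiation gives $P_{x,y,z}'(T)=(n-1)\,R(T)$ with $R\in\Z[x,y,z][T]$ of $T$-degree $2n-3$, so $\res_T(P_{x,y,z},P_{x,y,z}')=(n-1)^{2n-2}\res_T(P_{x,y,z},R)$; combined with $\disc_T P_{x,y,z}(T)=(-1)^{(n-1)(2n-3)}\,\bigl((-1)^nx\bigr)^{-1}\res_T(P_{x,y,z},P_{x,y,z}')$ this extracts the factor $(n-1)^{2n-2}$ and leaves a unit $\pm1$. I would fix it either by matching the power of $x$ found above against the $\bigl((-1)^nx\bigr)^{-1}$ normalization (equivalently, computing $\prod_i R(\rho_i)$ over the roots $\rho_i$ of $P_{x,y,z}$), or — most cheaply — by evaluating at $x=(-1)^n$, $y=z=1$, where the right side is $\det W_n$ by Theorem~\hyperref[xyz_Wendt_and_p_n]{4'} and the left side is the discriminant of \cite[Proposition 1]{Helou}. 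The case $n=2$, where $P_{x,y,z}(T)=xT^2+(x+y-z)T+y$ and $\disc_T=(x+y-z)^2-4xy=p_2(z;x,y)$, matches the predicted constant $(-1)^2(2-1)^{2}=1$.

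\textbf{Where the difficulty lies.} The two delicate points are: (i) confirming that along $V(p_n)$ the degeneration of $\tilde P$ is an ordinary node, so that this component appears in $\disc$ with multiplicity exactly one — this is precisely where the explicit critical points $s_0=1/(1+\epsilon^k(y/x)^{1/n})$ are needed, to see that generically just one of them hits $(-1)^n z$; and (ii) tracking all the signs — through the M\"obius substitution, the homogenization, and the choices of $n$-th root branches — consistently enough to obtain the precise constant $(-1)^n(n-1)^{2n-2}$ rather than only $\pm(n-1)^{2n-2}$. The order of vanishing $n-2$ along each of $x=0$, $y=0$, $z=0$ is, by contrast, forced cleanly by the local picture at the poles of $\phi$.
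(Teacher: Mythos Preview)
Your approach is sound and ultimately identifies the same critical values, but the organization is genuinely different from the paper's. The paper works directly with $P_{x,y,z}$ via the product formula $\disc_T(P)=C\prod_{P'(\tau)=0}P(\tau)$: it writes out $P'(T)$, uses the relation $P'(\tau)=0$ to simplify each factor $P(\tau)$ algebraically to $(-1)^n x(1+\tau)^{n-2}\bigl(y/x-\tau^n\bigr)$, and then evaluates the three products $\prod_\tau x$, $\prod_\tau(1+\tau)^{n-2}$ and $\prod_\tau(y/x-\tau^n)$ explicitly, the last of these being exactly your ``moving part'' and producing $p_n$ on the nose. There is no M\"obius substitution, no superpotential $\phi$, no appeal to the irreducibility of $p_n$ or to degree matching: everything, including the exact constant $(-1)^n(n-1)^{2n-2}$, falls out of a single direct computation.

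What your route buys is a cleaner geometric picture: the substitution $T=(1-s)/s$ makes the three-fold symmetry among $x,y,z$ visible as the three poles $0,1,\infty$ of $\phi$, and the divisor of $\disc$ is read off from the critical values of $\phi$ together with the local collisions at those poles, with Corollary~\ref{irred_p_n_over_C} and the degree count $4n-6$ replacing explicit multiplicity computations. The price is that the constant must be fixed separately, and this is where your sketch is thinnest. Extracting $(n-1)^{2n-2}$ from $\res_T(P,P')=(n-1)^{2n-2}\res_T(P,R)$ is fine, but the assertion that what remains is a unit $\pm1$ needs a content argument over $\Z[x,y,z]$, and the specialization to \cite{Helou} is awkward here because in the paper that corollary is \emph{derived from} the present theorem (and the index shift $n\leftrightarrow n-1$ between the two statements would still need untangling). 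The clean fix is to carry out your own alternative suggestion of computing $\prod_i R(\rho_i)$ over the roots of $P$ --- which, up to swapping the roles of $P$ and $P'$ in the resultant, is precisely the computation the paper performs.
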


\begin{proof}
As known \cite[Theorem 1.3.3]{Prasolov_polynomials}, for any complex polynomial $f(x) = \sum\limits_{k=0}^n a_k x^k$, its discriminant is expressed by the formula
$$\disc(f) = \frac{1}{a_n} \res(f, f’) = n^n a_n^{n-1} \prod\limits_{\tau: f’(\tau) = 0} f(\tau).$$

We first compute the derivative of the polynomial $P(T) = P_{x, y, z}(T)$:
\begin{align}\label{derivative}
P’(T) = (n-1)\cdot\Big(&(-1)^n x T^{n-2} (1 + T)^{n-1} + (-1)^n x T^{n-1} (1 + T)^{n-2} \\
&\notag + (-1)^n y (1 + T)^{n-2} - z T^{n-2} \Big).
\end{align}
If $\tau$ is a root of the equation $P’(\tau) = 0$, then by expressing the sum of the first and last terms from (\ref{derivative}) multiplied by $\tau$, we get
\begin{equation}\label{deriv_expr}
(-1)^n x \tau^{n-1} (1 + \tau)^{n-1} - \tau^{n-1} z = -(-1)^n x \tau^n (1 + \tau)^{n-2} - (-1)^n y \tau (1 + \tau)^{n-2}.
\end{equation}

Let
\begin{equation}\label{const}
C = (-1)^n (2n - 2)^{2n - 2} x^{2n - 3}.
\end{equation}
Then, taking into account (\ref{deriv_expr}), we have for the polynomial $P(T) = P_{x, y, z}(T)$:
\begin{align}\label{disc_eq}
\frac{1}{C} \disc_T(P_{x, y, z}(T)) &= \prod\limits_{\tau: P’(\tau) = 0} (-1)^n \left( -x \tau^n (1 + \tau)^{n-2} - y \tau (1 + \tau)^{n-2} + y (1 + \tau)^{n-1} \right) \\
&= \prod\limits_{\tau: P’(\tau) = 0} (-1)^n x (1 + \tau)^{n-2} \left(\frac{y}{x} - \tau^n\right) \notag \\
&= (-1)^{n(2n-3)} \left( \prod\limits_{\tau: P’(\tau) = 0} x \right) \left( \prod\limits_{\tau: P’(\tau) = 0} (1 + \tau)^{n-2} \right) \left( \prod\limits_{\tau: P’(\tau) = 0} \left( \frac{y}{x} - \tau^n \right) \right). \notag
\end{align}

We observe that everything reduces to computing three products in (\ref{disc_eq}).

Since $\deg(P’) = 2n - 3$, the first product is
\begin{equation}\label{prod_1}
\prod\limits_{\tau: P’(\tau) = 0} x = x^{2n-3}.
\end{equation}

Compute the second product:
\begin{align}\label{prod_2}
\prod\limits_{\tau: P’(\tau) = 0} (1 + \tau)^{n-2} &= \left((-1)^{2n-3} \prod\limits_{\tau: P’(\tau) = 0}  (-x - \tau)|_{x = 1}\right)^{n-2} \\
&= (-1)^{n(2n-3)} \left( \frac{P’(-1)}{(n-1)\cdot 2x} \right)^{n-2} \notag \\
&= (-1)^{n(2n-3)} \left( \frac{z}{2x} \right)^{n-2}. \notag
\end{align}

Now, compute the third product:
 \begin{align}\label{prod_3}
\prod_{\tau: P'(\tau) = 0} \left(\frac{y}{x} - \tau^n\right)
&= \prod_{\substack{\tau: P'(\tau) = 0 \\ \epsilon : \epsilon^n = 1}} \left(\epsilon\sqrt[n]{\frac{y}{x}} - \tau\right) \displaybreak[1]\\
&= \prod_{\substack{\tau: P'(\tau) = 0 \\ \epsilon : \epsilon^n = 1}} \left.\left(T - \tau\right)\right|_{T = \epsilon\sqrt[n]{\frac{y}{x}}} \displaybreak[1]\notag\\
&= \prod_{\epsilon: \epsilon^n = 1} \frac{P'(\epsilon\sqrt[n]{\frac{y}{x}})}{(n-1)\cdot (-1)^n 2x} \displaybreak[1]\notag\\
&
   \begin{aligned}
    =\prod_{\epsilon: \epsilon^n = 1} \frac{T^n}{2x}&\Big(xT^{-2}(1+T)^{n-1} + xT^{-1}(1+T)^{n-2}\notag \\
   &+ x(1+T)^{n-2} - (-1)^n z T^{-2}\Big)\Big|_{T = \sqrt[n]{\frac{y}{x}}}
   \end{aligned}
    \displaybreak[1]\notag\\
&= \prod_{\epsilon: \epsilon^n = 1} \frac{y}{2x^2}
    \left.\left( \frac{x(1+T)^n}{T^2} - (-1)^n\frac{z}{T^2} \right)\right|_{T = \sqrt[n]{\frac{y}{x}}} \displaybreak[1]\notag\\
&= \frac{y^{n-2}}{2^n x^{2n-2}} \prod_{\epsilon: \epsilon^n = 1} 
   \left( z -((-1)^n\sqrt[n]{x} + (-1)^n\epsilon\sqrt[n]{y})^n \right).\notag
\end{align}

Multiplying the values of (\ref{prod_1}), (\ref{prod_2}), (\ref{prod_3}) of the three obtained products, and not forgetting the factor (\ref{const}), we find from (\ref{disc_eq}):
\begin{equation*}
\begin{aligned}
&(-1)^n(2n-2)^{2n-2}x^{2n-3}\cdot x^{2n-3}\cdot (-1)^{n(2n-3)}\cdot (-1)^{n(2n-3)}\cdot \left(\frac{z}{2x}\right)^{n-2}\cdot\frac{y^{n-2}}{2^nx^{2n-2}} \\
&\cdot\prod\limits_{\epsilon: \epsilon^n = 1}\left( z - ((-1)^n\sqrt[n]{x} + (-1)^n\epsilon\sqrt[n]{y})^n  \right ) \\
&= (-1)^n(n-1)^{2n-2}(xyz)^{n-2}p_n(z).
\end{aligned}
\end{equation*}

\end{proof}

From this result, we obtain

\begin{coroll}[(Proposition 1, \cite{Helou})]
For each odd $n\geqslant 3$, the discriminant $\disc Q(t)$ of the polynomial $Q(t) = (1 + t)^n - t^n - 1$ and the determinant $\det W_{n-1}$ of the Wendt matrix are related by the following equality:
$$\disc Q(t) = (-1)^{\frac{n-1}{2}}n^{n-2}\det W_{n-1}.$$
\end{coroll}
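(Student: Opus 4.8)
The plan is to deduce the corollary from the classical discriminant--resultant identity together with the circulant--resultant formula $\det\Circ(a_0,\dots,a_{m-1}) = \res_t(a_0 + \dots + a_{m-1}t^{m-1},\, t^m-1)$ recalled above, specialized at $m = n-1$. This runs in the same spirit as the proof of Theorem~\ref{discriminant_two_dim}; a direct specialization of the polynomial of Theorem~\ref{discriminant_two_dim} does not literally reproduce $Q$, so the self-contained route below seems shortest.

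First I would record the elementary facts. The $t^n$ terms cancel, so $Q(t) = (1+t)^n - t^n - 1$ has degree $n-1$ and leading coefficient $n$, and
\[
Q'(t) = n\bigl((1+t)^{n-1} - t^{n-1}\bigr) = n\,R(t), \qquad R(t) := (1+t)^{n-1} - t^{n-1},
\]
where $R$ has degree $n-2$ and leading coefficient $\binom{n-1}{n-2} = n-1$. Taking $\disc$ in the usual (Vandermonde-squared) normalization, so that $\disc Q = (-1)^{(n-1)(n-2)/2}\,\res(Q,Q')/n$, and using $\res(Q, n R) = n^{\deg Q}\res(Q,R) = n^{\,n-1}\res(Q,R)$, we get
\[
\disc Q = (-1)^{(n-1)(n-2)/2}\, n^{\,n-2}\,\res(Q,R).
\]

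The key step is the observation that at every root $\rho$ of $R$ one has $(1+\rho)^{n-1} = \rho^{n-1}$, and hence
\[
Q(\rho) = (1+\rho)(1+\rho)^{n-1} - \rho\cdot\rho^{n-1} - 1 = (1+\rho)\,\rho^{n-1} - \rho^{\,n} - 1 = \rho^{\,n-1} - 1 .
\]
Thus $Q$ and $t^{n-1}-1$ have the same degree and take equal values at all roots of $R$, so by the product formula $\res(R,g) = (\mathrm{lc}\,R)^{\deg g}\prod_{R(\rho)=0} g(\rho)$ we obtain $\res(R,Q) = \res(R,\,t^{n-1}-1)$. Since $R(t) = \sum_{k=0}^{n-2}\binom{n-1}{k}t^k$, the circulant--resultant formula identifies $\res(R,\,t^{n-1}-1)$ with $\det W_{n-1}$. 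Finally $\deg Q\cdot\deg R = (n-1)(n-2)$ is even, so $\res(Q,R) = \res(R,Q) = \det W_{n-1}$, and therefore $\disc Q = (-1)^{(n-1)(n-2)/2}\, n^{\,n-2}\det W_{n-1}$.

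It remains only to rewrite the sign: for odd $n = 2k+1$ one has $(n-1)(n-2)/2 = k(2k-1)\equiv k \pmod 2$ while $(n-1)/2 = k$, so $(-1)^{(n-1)(n-2)/2} = (-1)^{(n-1)/2}$, giving the asserted equality. I expect the only delicate point to be the sign bookkeeping --- in particular fixing the normalization of $\disc$, which is exactly what produces the factor $(-1)^{(n-1)/2}$; the resultant manipulations themselves are routine.
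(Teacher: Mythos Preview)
Your argument is correct. The reduction $Q(\rho)=\rho^{n-1}-1$ at roots of $R=Q'/n$ is the essential step, the identification $\res(R,t^{n-1}-1)=\det W_{n-1}$ via the circulant formula is exactly the paper's $\det W_m=\res((1+t)^m-t^m,\,t^m-1)$ at $m=n-1$, and the sign and degree bookkeeping checks out (in particular $(n-1)(n-2)$ is always even, so the two swaps of resultant arguments are harmless).

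As for comparison with the paper: the paper does not give a proof beyond the phrase ``From this result, we obtain'', pointing to Theorem~\ref{discriminant_two_dim}. You are right that a literal specialization of $P_{x,y,z}$ does not reproduce $Q$; the degree of $P_{x,y,z}$ is $2(n-1)$, not $n-1$, and the index shift to $W_{n-1}$ would require applying the theorem at level $n-1$ and then relating $\disc_T P^{(n-1)}_{(-1)^{n-1},1,1}$ to $\disc Q$, which is not immediate. What you do instead is reuse the \emph{method} of the proof of Theorem~\ref{discriminant_two_dim} --- write the discriminant as a product over the roots of the derivative and simplify each factor --- directly on $Q$. This is both shorter and cleaner than forcing the specialization through, and it makes transparent exactly where the Wendt circulant enters. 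The only thing lost relative to the paper's implied route is that your proof does not formally \emph{use} Theorem~\ref{discriminant_two_dim}; but since the paper does not spell out how the corollary follows from it either, your self-contained argument is at least as complete.
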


Consider the homogeneous polynomial
\begin{equation}\label{homogeneous_mult}
P(u) = (u_1 \dots u_m)^{n-1}\left(z + (-1)^n\left ( \sum\limits_{j=1}^m u_j \right )^{n-1}\cdot \sum\limits_{j=1}^m\frac{x_j}{u_j^{n-1}}\right)
\end{equation}
of degree $m(n-1)$ in the variables $u_1\dots u_m$.

\begin{prop}
The discriminant $\disc(P(u))$ of the polynomial (\ref{homogeneous_mult}) is a homogeneous polynomial of degree $m(m(n-1)-1)^{m-1}$.
\end{prop}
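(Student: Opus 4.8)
The plan is to read off the coefficients of $P(u)$ as a form in $u_1,\dots,u_m$ and then apply the degree formula for the discriminant of a space of forms that follows from (\ref{res}). First one expands (\ref{homogeneous_mult}):
$$P(u)=z\,(u_1\cdots u_m)^{n-1}+(-1)^n\Bigl(\sum_{i=1}^m u_i\Bigr)^{n-1}\sum_{j=1}^m x_j\prod_{k\neq j}u_k^{n-1},$$
so that $P(u)$ is a homogeneous form of degree $d:=m(n-1)$ in $u_1,\dots,u_m$, and the coefficient of each monomial in $u_1,\dots,u_m$ occurring in $P(u)$ is a homogeneous linear form $c\,z+\sum_{j=1}^m d_j x_j$ in the variables $z,x_1,\dots,x_m$ with integers $c,d_j$. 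Consequently each partial derivative $\partial P/\partial u_i$ is a form of degree $d-1$ in $u_1,\dots,u_m$ whose coefficients are again homogeneous linear forms in $z,x_1,\dots,x_m$.

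Next I would invoke the degree formula. By definition, $\disc(P(u))$ is obtained by substituting the coefficients of $\partial P/\partial u_1,\dots,\partial P/\partial u_m$ into the discriminant $\disc_{d,m}$ of the space of degree-$d$ forms in $m$ variables, and, as recalled after the definition of the discriminant (a consequence of (\ref{res})), $\disc_{d,m}$ is a homogeneous polynomial of degree $m(d-1)^{m-1}$ in its coefficient space. Since substituting homogeneous degree-$1$ forms for the coefficient variables of a homogeneous polynomial of degree $D$ yields either $0$ or a homogeneous polynomial of degree $D$, it follows that $\disc(P(u))$ is a homogeneous polynomial in $z,x_1,\dots,x_m$ of degree
$$m(d-1)^{m-1}=m\bigl(m(n-1)-1\bigr)^{m-1},$$
which is the asserted value — provided the substitution does not annihilate $\disc_{d,m}$.

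The hard part will be exactly this last proviso: that the degree is \emph{precisely} $m(m(n-1)-1)^{m-1}$ rather than smaller, equivalently that $\disc(P(u))\not\equiv 0$, equivalently that the hypersurface $\{P(u)=0\}\subset\Cc\!P^{m-1}$ is smooth for generic values of $z,x_1,\dots,x_m$. I would attack this with Bertini's theorem — which yields smoothness of a generic member of the linear system $[z:x_1:\cdots:x_m]\mapsto\{P(u)=0\}$ away from its base locus — together with a direct computation of the Jacobian of $P$ along that base locus, which is contained in the coordinate subspaces $\{u_i=u_j=0\}$. For $m=2$ this step is not needed, since Theorem \ref{discriminant_two_dim} already produces $\disc(P(u))$ explicitly, up to sign and the factor $(n-1)^{2n-2}(x_1x_2z)^{n-2}$, as the nonzero polynomial $p_n(z;x_1,x_2)$ of degree $4n-6=2\bigl(2(n-1)-1\bigr)$, matching the general formula.
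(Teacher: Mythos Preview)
Your approach is essentially the same as the paper's: both observe that $P(u)$ is a form of degree $d=m(n-1)$ in $u_1,\dots,u_m$ whose coefficients are linear in $z,x_1,\dots,x_m$, and then invoke the degree formula $m(d-1)^{m-1}$ for the universal discriminant. The paper's proof is in fact only the first half of your argument --- it simply states that $\disc(P(u))$ is the restriction of the universal discriminant to the $(m+1)$-dimensional linear subspace in coefficient space parametrized by $z,x_1,\dots,x_m$, and stops there. It does not address the nonvanishing issue you correctly flag as the ``hard part.'' Your proposal to settle this via Bertini together with a base-locus analysis (and via Theorem~\ref{discriminant_two_dim} for $m=2$) therefore goes beyond what the paper's own proof establishes; the paper leaves that point implicit, to be used downstream in Theorem~\ref{multi_disc} and the degree count~(\ref{discriminant_degree}).
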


\begin{proof}
Indeed, $\disc(P(u))$ is the restriction of the discriminant $\disc$ of the space of homogeneous polynomials of degree $m(n-1)$ in $m$ variables to an $(m+1)$-dimensional vector subspace parametrized by the variables $z, x_1\dots x_m$ in the coefficient space $\Cc[u_{i, \alpha}]$.
\end{proof}

Note that, as follows from Hilbert's Nullstellensatz, the aforementioned result from \cite[Theorem 3.2]{Gaiur} has the following equivalent reformulation in terms of the discriminants of homogeneous polynomials:

\begin{theorem}\label{multi_disc}
The discriminant $\disc P_x(u)$ of the homogeneous polynomial (\ref{homogeneous_mult}), which has degree $m(n-1)$ in the variables $u_1\dots u_m$, is divisible by certain powers of the monomial $zx_1 \dots x_m$ and the polynomial $p_n(z; x_1\dots x_m)$.
\end{theorem}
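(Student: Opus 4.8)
The plan is to translate the claimed divisibility into a statement about zero loci and then to quote \cite{Gaiur} for the essential inclusion (throughout, $m\geqslant 2$; the case $m=1$ is degenerate). Fix the parameters and write $P_x(u)$ for the form (\ref{homogeneous_mult}); recall that $\disc P_x(u)$ is the restriction, to the $(m+1)$-dimensional linear subspace of coefficient space parametrised by $z,x_1,\dots,x_m$ through (\ref{homogeneous_mult}), of the discriminant of the space of forms of degree $m(n-1)$ in $u_1,\dots,u_m$ in the sense of \cite{Gelfand}; thus $\disc P_x(u)\in\Cc[z,x_1,\dots,x_m]$, and it is not identically zero (for $m=2$ this follows from Theorem \ref{discriminant_two_dim} after a change of signs of the variables, and in general since a generic member of the family is smooth). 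Since the degree $m(n-1)$ is invertible in $\Cc$, Euler's identity shows that the zero set $\Delta$ of $\disc P_x(u)$ is exactly the set of parameters $(z,x_1,\dots,x_m)$ for which the projective hypersurface $\{P_x(u)=0\}\subset\mathbb{P}^{m-1}$ has a singular point. As $p_n(z;x_1,\dots,x_m)$ is irreducible over $\Cc$ by Corollary \ref{irred_p_n_over_C}, and $z,x_1,\dots,x_m$ are pairwise non-associate irreducibles, Hilbert's Nullstellensatz reduces the theorem to the inclusion $V(p_n)\cup V(z)\cup V(x_1)\cup\dots\cup V(x_m)\subseteq\Delta$, the exponents in the monomial part then coming from the orders of vanishing of $\disc P_x(u)$ along the coordinate hyperplanes.

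That $V(p_n)\subseteq\Delta$ is the substantive point, and here I would invoke \cite[Theorem 3.2]{Gaiur} --- the discriminant-theoretic reading, recalled above, of \cite[Theorem 2.3]{Gaiur}: the singular variety of the Picard--Fuchs system of the Bessel kernel $K_n$, which in the dictionary of that paper is precisely the multiple-root locus $\Delta$ of the form (\ref{homogeneous_mult}), equals $V(p_n)$ together with the coordinate planes, so in particular $V(p_n)\subseteq\Delta$. One can also see this directly, extending the computation in the proof of Theorem \ref{discriminant_two_dim}: since $V(p_n)$ is irreducible it suffices to check $\disc P_x(u)=0$ on the dense open subset of $V(p_n)$ on which $z,x_1,\dots,x_m$ are all nonzero; a point there gives, via $p_n=\prod_{s_1,\dots,s_m}(\sqrt[n]{z}+\sum_j\epsilon^{s_j}\sqrt[n]{x_j})$, $n$-th roots $\zeta_0$ of $z$ and $\zeta_j$ of $x_j$ with $\zeta_0+\zeta_1+\dots+\zeta_m=0$, and one then checks that $u^{\ast}=(\zeta_0/\zeta_1:\dots:\zeta_0/\zeta_m)$ is a common zero of $P_x$ and of all the $\partial P_x/\partial u_i$, the vanishing of the partials collapsing --- after clearing denominators and using Euler's identity --- to the single relation $\sum_j\zeta_j=-\zeta_0$.

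The coordinate hyperplanes are then handled cheaply. Setting $x_k=0$ in (\ref{homogeneous_mult}), every remaining monomial of $P_x(u)$ still carries the factor $u_k^{\,n-1}$; setting $z=0$ leaves $P_x(u)$ divisible by $(u_1+\dots+u_m)^{n-1}$. For $n\geqslant 3$ the form $P_x(u)$ thereby acquires a square factor, hence is non-reduced, so every point of the corresponding coordinate hyperplane of $\mathbb{P}^{m-1}$ is a singular point of $\{P_x(u)=0\}$; for $n=2$ (so $m\geqslant 3$) it factors as (linear form)$\,\cdot\,$(form of degree $m-1$), whose zero loci meet in $\mathbb{P}^{m-1}$ for dimension reasons, again yielding a singular point. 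Hence $V(z),V(x_1),\dots,V(x_m)\subseteq\Delta$ (the case $n=m=2$ being the empty monomial). Granting both inclusions, $\disc P_x(u)$ vanishes on each of the irreducible hypersurfaces $V(p_n),V(z),V(x_1),\dots,V(x_m)$; by the Nullstellensatz each of $p_n,z,x_1,\dots,x_m$ divides $\disc P_x(u)$, and these being pairwise coprime, so does $p_n\cdot(zx_1\cdots x_m)^{c}$ for a suitable $c\geqslant 0$ (positive whenever $n\geqslant 3$, or $n=2$ and $m\geqslant 3$), which is the assertion.

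The step I expect to be the main obstacle is the inclusion $V(p_n)\subseteq\Delta$. The citation route is short but requires checking that the ``singular variety'' of \cite{Gaiur} coincides on the nose --- signs and normalisations included --- with the discriminant locus of the particular form (\ref{homogeneous_mult}), a matching carried out there only in part for $m>2$; this is presumably why the present theorem asserts only divisibility rather than the exact identity (valid for $m=2$) expressing $\disc P_x(u)$ as a constant times $(zx_1\cdots x_m)^{n-2}p_n$. The self-contained route avoids the citation but replaces it by the explicit verification that $u^{\ast}$ annihilates every $\partial P_x/\partial u_i$; the reduction of this to the linear relation $\sum_j\zeta_j=-\zeta_0$ is where the real work lies.
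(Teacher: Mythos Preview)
Your proposal is correct and follows the same route as the paper: the paper does not give a separate proof but simply states, in the sentence preceding the theorem, that the result is an equivalent reformulation of \cite[Theorem 3.2]{Gaiur} via Hilbert's Nullstellensatz. You carry out exactly this reduction, and in fact supply considerably more detail than the paper does --- the explicit treatment of the coordinate hyperplanes, the check that $\disc P_x(u)$ is not identically zero, and the sketched self-contained verification of $V(p_n)\subseteq\Delta$ are all additions beyond what the paper records.
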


Let the monomial $zx_1 \dots x_m$ appear with multiplicity $\alpha$, and the polynomial $p_n(z; x_1\dots x_m)$ with multiplicity $\beta$. Then, the degree counting gives the equation
\begin{equation}\label{discriminant_degree}
\alpha(m+1) + \beta n^{m-1} = m(mn - (m+1))^{m-1}.
\end{equation}

\begin{example}
From the conditions of (\ref{discriminant_degree}), it follows that the polynomial $p_n$ does not always appear in the discriminant to the first degree. For example, this is not the case when $m = 3$ and any odd $n$.
\end{example}

\section{$n$-Valued Structures on Representations of Groups}\label{characters}

In \cite[Theorem 5]{Buchstaber}, an $n$-valued group structure is introduced on the set $\{\rho_1\dots \rho_k\}$ of finite dimensional irreducible unitary representations of a finite group $G$. We will recall this construction here.

Let ${a_{jk}^{\ell}}$ be the multiplicities in the decomposition of the tensor product $\rho_j \otimes \rho_k$ into a sum of irreducible representations a finite group $G$:
$$\rho_j \otimes \rho_k = \sum\limits_{\ell = 0}^k a_{jk}^{\ell} \rho_{\ell}.$$

\begin{prop}
Let $d_j = \dim \rho_j$, $n = \text{lcm}(d_j d_k \mid j, k = 0\dots k)$, and $m_{jk}^{\ell} = n a_{jk}^{\ell} d_{\ell} / (d_j d_k)$. Then, on the set of normalized irreducible representations $\{\rho_j / \dim \rho_j\}$, there is an $n$-valued group structure $\G^{\rep}_{\otimes}$, such that the element $\rho_\ell / \dim \rho_\ell$ appears in the $n$-multiset $\rho_j / \dim \rho_j \ast \rho_k / \dim \rho_k$ with multiplicity $m_{jk}^{\ell}$.
\end{prop}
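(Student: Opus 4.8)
The plan is to verify directly that the formula
\[
\frac{\rho_j}{\dim\rho_j}\ast\frac{\rho_k}{\dim\rho_k}
=\Big[\,\tfrac{\rho_\ell}{\dim\rho_\ell}\ \text{counted with multiplicity}\ m_{jk}^{\ell}\,\Big]
\]
defines an $n$-valued multiplication on the finite set $X=\{\rho_j/\dim\rho_j\}$ in the sense of Definition \ref{nval_group_def}. The first step is well-definedness. Each $m_{jk}^{\ell}=n\,a_{jk}^{\ell}d_\ell/(d_jd_k)$ is a nonnegative integer, because the $a_{jk}^{\ell}$ are nonnegative integers and $d_jd_k\mid n$ by the choice of $n$ as a common multiple of all products $d_jd_k$; moreover, summing over $\ell$ and using $\sum_\ell a_{jk}^{\ell}d_\ell=\dim(\rho_j\otimes\rho_k)=d_jd_k$ gives $\sum_\ell m_{jk}^{\ell}=n$, so $\ast$ takes values in $\Sym^n(X)$, as required.

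Next I would dispatch the identity and inverse axioms. For the identity, take $e=\rho_0/\dim\rho_0=\rho_0$, the trivial representation (here $d_0=1$). From $\rho_0\otimes\rho_k\cong\rho_k$ one gets $a_{0k}^{\ell}=\delta_{\ell k}$, hence $m_{0k}^{\ell}=\tfrac{n}{d_k}\,\delta_{\ell k}\,d_\ell=n\,\delta_{\ell k}$, so $e\ast(\rho_k/d_k)=[\rho_k/d_k,\dots,\rho_k/d_k]$ ($n$ copies), and symmetrically on the other side. For the inverse, set $\inv(\rho_j/d_j)=\overline{\rho_j}/d_j$, the dual representation, which is again irreducible of dimension $d_j$. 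By Schur orthogonality, the multiplicity of $\rho_0$ in $\rho_j\otimes\overline{\rho_j}$ equals $\langle\chi_{\rho_j}\overline{\chi_{\rho_j}},1\rangle=\langle\chi_{\rho_j},\chi_{\rho_j}\rangle=1$ (with $\chi_\rho$ the character of $\rho$); hence the multiplicity of $e$ in $\inv(\rho_j/d_j)\ast(\rho_j/d_j)$ is $n/d_j^2\geqslant 1$ (again using $d_j^2\mid n$), and likewise in $(\rho_j/d_j)\ast\inv(\rho_j/d_j)$. Thus $e\in\inv(x)\ast x$ and $e\in x\ast\inv(x)$.

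The substantive step is associativity. Fix normalized irreducibles $x=\rho_a/d_a$, $y=\rho_b/d_b$, $z=\rho_c/d_c$ and a target $\rho_p/d_p$. Expanding the multiplicities, the multiplicity of $\rho_p/d_p$ in the $n^2$-multiset $[x\ast w\mid w\in y\ast z]$ is
\[
\sum_\ell m_{bc}^{\ell}\,m_{a\ell}^{p}=\frac{n^{2}d_p}{d_ad_bd_c}\sum_\ell a_{bc}^{\ell}a_{a\ell}^{p},
\]
and its multiplicity in $[w\ast z\mid w\in x\ast y]$ is
\[
\sum_\ell m_{ab}^{\ell}\,m_{\ell c}^{p}=\frac{n^{2}d_p}{d_ad_bd_c}\sum_\ell a_{ab}^{\ell}a_{\ell c}^{p},
\]
the factors $d_\ell$ cancelling in each case. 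Now $\sum_\ell a_{bc}^{\ell}a_{a\ell}^{p}$ is exactly the multiplicity of $\rho_p$ in $\rho_a\otimes(\rho_b\otimes\rho_c)$, obtained by decomposing $\rho_b\otimes\rho_c$ and then tensoring with $\rho_a$, and $\sum_\ell a_{ab}^{\ell}a_{\ell c}^{p}$ is the multiplicity of $\rho_p$ in $(\rho_a\otimes\rho_b)\otimes\rho_c$; these agree because $\otimes$ is associative up to isomorphism. Hence the two $n^2$-multisets coincide, which is the associativity axiom.

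Finally, $\rho_j\otimes\rho_k\cong\rho_k\otimes\rho_j$ gives $a_{jk}^{\ell}=a_{kj}^{\ell}$ and so $m_{jk}^{\ell}=m_{kj}^{\ell}$, so the resulting $n$-valued group $\G^{\rep}_{\otimes}$ is moreover commutative. The main obstacle is essentially presentational: one must arrange the associativity bookkeeping so that the cancellation of the $d_\ell$-factors and the reduction to associativity of $\otimes$ are transparent. Everything else is a short verification resting on three elementary facts: $d_0=1$, $\dim\overline{\rho_j}=\dim\rho_j$, and the divisibility $d_jd_k\mid n$ built into the definition of $n$.
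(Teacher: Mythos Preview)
Your proof is correct. The paper does not actually supply a proof of this proposition: it is stated as a recollection of \cite[Theorem 5]{Buchstaber}, and commutativity is then isolated as a separate short proposition proved via the similarity $A\otimes B\sim B\otimes A$. Your argument is a self-contained verification of the $n$-valued group axioms directly from Definition~\ref{nval_group_def}, and all steps check out: the divisibility $d_jd_k\mid n$ (and in particular $d_j^2\mid n$) ensures the multiplicities $m_{jk}^{\ell}$ and $n/d_j^2$ are nonnegative integers; the dimension identity $\sum_\ell a_{jk}^{\ell}d_\ell=d_jd_k$ gives total multiplicity $n$; and the associativity computation is exactly right, the key cancellation of $d_\ell$ reducing everything to associativity of $\otimes$. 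Your closing remark on commutativity via $a_{jk}^{\ell}=a_{kj}^{\ell}$ is the character-theoretic counterpart of the paper's matrix-similarity argument.
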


\begin{prop}
The $n$-valued group $\G^{\rep}_{\otimes}$ is commutative.
\end{prop}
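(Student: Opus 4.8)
The plan is to reduce commutativity of the $n$-valued multiplication to the symmetry of the tensor-product multiplicities $a_{jk}^{\ell}$. By construction, the $n$-multiset $\rho_j/d_j \ast \rho_k/d_k$ (where $d_j = \dim\rho_j$ and $n = \text{lcm}(d_jd_k)$) is completely determined by the numbers $m_{jk}^{\ell} = n a_{jk}^{\ell} d_{\ell}/(d_jd_k)$, which record the multiplicity with which $\rho_{\ell}/d_{\ell}$ occurs in it. Since the prefactor $n d_{\ell}/(d_jd_k)$ is visibly invariant under interchanging $j$ and $k$, it suffices to check that $a_{jk}^{\ell} = a_{kj}^{\ell}$ for all $j,k,\ell$; equality of these multiplicities then forces $m_{jk}^{\ell} = m_{kj}^{\ell}$, hence $\rho_j/d_j \ast \rho_k/d_k = \rho_k/d_k \ast \rho_j/d_j$ as multisets, which is exactly commutativity.

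To establish $a_{jk}^{\ell} = a_{kj}^{\ell}$ I would invoke the standard fact that the tensor product of representations is symmetric: the flip $v\otimes w \mapsto w\otimes v$ is a $G$-equivariant isomorphism $\rho_j\otimes\rho_k \cong \rho_k\otimes\rho_j$, so the two sides have the same decomposition into irreducibles by the uniqueness provided by Maschke's and Schur's lemmas. Equivalently, on the level of characters one has $\chi_{\rho_j}\chi_{\rho_k} = \chi_{\rho_k}\chi_{\rho_j}$ in the commutative ring of class functions on $G$, and $a_{jk}^{\ell} = \langle \chi_{\rho_j}\chi_{\rho_k}, \chi_{\rho_{\ell}}\rangle$ is therefore symmetric in $j$ and $k$.

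I expect no real obstacle here: the substance of the statement is simply the commutativity of $\otimes$ on $\mathrm{Rep}(G)$, and the only point that deserves to be spelled out with some care is the first reduction, namely that the $n$-valued product in $\G^{\rep}_{\otimes}$ is entirely encoded by the multiplicities $m_{jk}^{\ell}$, so that verifying commutativity of multisets is the same as verifying $m_{jk}^{\ell} = m_{kj}^{\ell}$. One may also note in passing that the identical argument applies to any $n$-valued structure built from a symmetric monoidal operation on $\mathrm{Rep}(G)$, so it likewise yields commutativity of the Kronecker-sum variant $\G^{\rep}_{\boxplus}$.
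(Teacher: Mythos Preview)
Your proposal is correct and takes essentially the same approach as the paper: both arguments rest on the fact that $\rho_j\otimes\rho_k$ and $\rho_k\otimes\rho_j$ are equivalent representations via the flip. The paper phrases this as ``$A\otimes B$ is similar to $B\otimes A$ via a permutation matrix $U$ independent of $A$ and $B$'' and concludes directly, while you spell out the reduction to $a_{jk}^{\ell}=a_{kj}^{\ell}$ via the multiplicities $m_{jk}^{\ell}$; the underlying content is identical.
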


\begin{proof}
For any square matrices $A$ and $B$ of fixed dimensions, the matrix $A \otimes B$ is similar to the matrix $B \otimes A$ via some permutation matrix $U$, which does not depend on $A$ and $B$ \cite[Corollary 4.3.10]{Horn}. Hence, unitary representations $\phi \otimes \psi$ and $\psi \otimes \phi$ are equivalent for any $\phi$ and $\psi$ from $\G^{\rep}_{\otimes}$.
\end{proof}

Now, let us replace the tensor product with the Kronecker sum. Let $\{\psi_j\}$ be the set of all (including reducible) finite-dimensional representations of the group $G$ with $\dim \psi_j = d_j$. For the corresponding characters ${\chi_j}$ of the representations, we have
\begin{equation}
\begin{aligned}
\chi(\psi_j \boxplus \psi_k) &= \tr(\psi_j \otimes \id_{d_k} + \id_{d_j} \otimes \psi_k) \
&= d_k \chi_j + d_j \chi_k.
\end{aligned}
\end{equation}
Equivalently,
\begin{equation}\label{character_equality}
\frac{\chi_j}{d_j} + \frac{\chi_k}{d_k} = \frac{\chi(\psi_j \boxplus \psi_k)}{d_k d_j}.
\end{equation}
Thus, for arbitrary representations $\psi_j$ and $\psi_k$, by the correspondence between representations and characters and using the equality (\ref{character_equality}), we obtain the formal equality
\begin{equation}\label{rep_equality}
\frac{\psi_j}{d_j} + \frac{\psi_k}{d_k} = \frac{\psi_j \boxplus \psi_k}{d_k d_j}.
\end{equation}
Let us define
$$\G^{\rep}_{\boxplus} = \{\psi / \dim \psi \mid \psi \text{ is a character of a representation of the group } G\}.$$

\begin{theorem}\label{Kronecker_sum_group}
Let $G$ be a finite group. The set $\G^{\rep}_{\boxplus}$, with the operation (\ref{character_equality}), neutral element 0, and formal inverse $\inv(\psi / \dim \psi) = -\psi / \dim \psi$, is an abelian group. Moreover, $\G^{\rep}_{\boxplus} \cong \Z^N$, where $N$ is the number of conjugacy classes of the group $G$.
\end{theorem}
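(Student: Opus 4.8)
The plan is to present $\G^{\rep}_{\boxplus}$ by generators and relations, to exhibit the normalized irreducible characters as a free generating set, and then to read off the rank from the classical character theory of finite groups.

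First I would fix the standard data: let $\rho_1,\dots,\rho_N$ be the irreducible representations of $G$ over $\Cc$, with $d_i=\dim\rho_i$, characters $\chi_i$, and normalized irreducible characters $f_i:=\chi_i/d_i$. I will use only classical facts: a finite-dimensional representation is determined up to equivalence by its character; the $\chi_i$ are pairwise orthogonal (hence $\Cc$-linearly independent) in the space $\mathrm{Cl}(G)$ of class functions; and the number $N$ of irreducible characters equals the number of conjugacy classes of $G$. Since $f_i$ is the normalized character of $\rho_i$ itself, every $f_i$ lies in $\G^{\rep}_{\boxplus}$, and the $f_i$ are $\Cc$-linearly independent, being nonzero scalar multiples of the $\chi_i$.

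Next I would analyze the operation. The identity $\chi(\psi_j\boxplus\psi_k)=d_k\chi_j+d_j\chi_k$ shows that the character attached to $\psi_j\boxplus\psi_k$ is that of the genuine representation $\psi_j^{\oplus d_k}\oplus\psi_k^{\oplus d_j}$: no fractional multiplicities appear. This is precisely the step at which $\boxplus$ behaves better than $\otimes$ — for the tensor product one must clear denominators by passing to the least common multiple of the dimensions $d_jd_k$, and that is what forces the $n$-valued structure of $\G^{\rep}_{\otimes}$, whereas here the result is a single element, so $\G^{\rep}_{\boxplus}$ is an ordinary ($1$-valued) group. Dividing by dimensions, equation (\ref{character_equality}) says that on normalized characters the operation is just addition of class functions, with the zero function as the formal identity and negation as the formal inverse; associativity, commutativity, the identity law, and the inverse law are then automatic, since everything takes place inside the $\Cc$-vector space $\mathrm{Cl}(G)$. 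Iterating the displayed identity, every element produced by the construction is an integer combination of the $f_i$, and each $f_i$ itself occurs; hence $\G^{\rep}_{\boxplus}=\bigoplus_{i=1}^N\Z f_i$, the free abelian group on the $f_i$ (equivalently, the additive group of the representation ring of $G$), with $0$ and $\inv$ as asserted.

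Finally, $\bigoplus_{i=1}^N\Z f_i\cong\Z^N$, and $N$ is the number of conjugacy classes of $G$, so $\G^{\rep}_{\boxplus}\cong\Z^N$. I expect the main obstacle to be the previous step: one must verify carefully that the formal group generated by the operation (\ref{character_equality}) together with formal inverses is exactly the rank-$N$ lattice $\bigoplus\Z f_i$ and acquires no further rational directions — which reduces to observing that the construction only ever forms $\boxplus$-combinations of the irreducible characters and that such combinations stay in the integer span of the $f_i$.
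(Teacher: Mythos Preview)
Your proposal is correct and follows essentially the same route as the paper: identify the operation as ordinary addition of normalized characters, observe that the normalized irreducible characters $f_i=\chi_i/d_i$ generate, and use their linear independence (orthogonality) to conclude there are no relations, giving $\Z^N$ with $N$ the number of conjugacy classes. The only cosmetic difference is that the paper checks associativity and commutativity at the level of Kronecker sums of matrices (showing $A\boxplus B$ is similar to $B\boxplus A$) and then invokes the Grothendieck construction, whereas you read these group axioms off directly from addition in the class-function space $\mathrm{Cl}(G)$.
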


\begin{proof}
From equality (\ref{rep_equality}), it follows that the set $\G^{\rep}_{\boxplus}$ is closed under the operation in question. From the associativity of the Kronecker sum, the operation is associative. Thus, the set $\G^{\rep}_{\boxplus}$ is endowed with the structure of a monoid. In the definition of the Kronecker sum $A \boxplus B$, the matrices $A \otimes I$ and $I \otimes B$ are involved. As mentioned in this section, the matrix $A \otimes I$ is similar to the matrix $I \otimes A$ via the permutation matrix $U$. Similarly, $I \otimes B$ is similar to $B \otimes I$ via the same matrix $U$. Therefore, the matrices $A \boxplus B$ and $B \boxplus A$ are similar via the matrix $U$.

Now, having a commutative monoid $\G^{\rep}_{\boxplus}$, we apply the Grothendieck construction and obtain the desired structure of an abelian group.

As is well known, every representation of a finite group $G$ over the field $\Cc$ decomposes into a sum of irreducible representations. Thus, the set $$\{\rho / \dim \rho \mid \rho \text{ is an irreducible representation}\}$$ forms the set of generators of the group $\G^{\rep}_{\boxplus}$. The absence of relations follows from the orthogonality of the characters of the irreducible representations.
\end{proof}

Note that n-valued groups on the set of finite-dimensional irreducible unitary representations of certain compact Lie groups were constucted in \cite{Buchstaber}.

\section{Groups $\G^m_n$}\label{G^m_n}

We introduce the following:

\begin{defi}\label{cartesian}
The direct product of the $n$-valued and $m$-valued groups $G$ and $H$ with operations denoted by $\ast$ is called the $mn$-valued group $G \times H$ with the operation $$\star: (G \times H)^2 \to \Sym^{mn}(G \times H),$$ such that
$$ (g_1, h_1) \star (g_2, h_2) = [\left( (g_1 \ast g_2)_{r_1}, (h_1 \ast h_2)_{r_2} \right) \mid r_1 = 1, \dots, m, r_2 = 1, \dots, n], $$
the identity is $(e_G, e_H)$, and the inverse is
$$ \inv(g, h) = (\inv(g), \inv(h)). $$
\end{defi}

In other words, the multiset $(g_1, h_1) \star (g_2, h_2)$ is the Cartesian product of the multisets $g_1 \ast g_2$ and $h_1 \ast h_2$ and the identity and inverse are component-wise.

\begin{prop}
Definition \ref{cartesian} is correct.
\end{prop}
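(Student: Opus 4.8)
The plan is to verify that the data $(\star,\,(e_G,e_H),\,\inv)$ of Definition~\ref{cartesian} satisfies the three axioms of an $mn$-valued multiplication from Definition~\ref{nval_group_def}, reducing each axiom to the corresponding axiom in $G$ and in $H$. The only tool I need is a convenient notation for Cartesian products of multisets: write $P\boxtimes Q$ for the multiset over $G\times H$ whose multiplicity at $(g,h)$ is the product of the multiplicity of $g$ in $P$ (a multiset over $G$) and that of $h$ in $Q$ (a multiset over $H$). Then by construction $(g_1,h_1)\star(g_2,h_2)=(g_1\ast_G g_2)\boxtimes(h_1\ast_H h_2)$, a multiset of size $nm=mn$; in particular $\star$ does take values in $\Sym^{mn}(G\times H)$, so the definition is well posed. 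The one lemma I would record first is the distributivity of $\boxtimes$ over multiset sums $\sqcup$: for any families $(P_i)_{i\in I}$ of multisets over $G$ and $(Q_j)_{j\in J}$ over $H$, indexed by multisets $I,J$, one has $\bigsqcup_{(i,j)\in I\boxtimes J}\bigl(P_i\boxtimes Q_j\bigr)=\bigl(\bigsqcup_{i\in I}P_i\bigr)\boxtimes\bigl(\bigsqcup_{j\in J}Q_j\bigr)$, which follows at once by comparing the multiplicity of a fixed $(g,h)$ on both sides (each side is the product of $\sum_i\mu_I(i)\mu_{P_i}(g)$ and $\sum_j\mu_J(j)\mu_{Q_j}(h)$).

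The main step is associativity. I would expand the first of the two $(mn)^2$-multisets: since $(g_2,h_2)\star(g_3,h_3)=(g_2\ast_G g_3)\boxtimes(h_2\ast_H h_3)$, applying the distributivity lemma to $\bigl[(g_1,h_1)\star w\bigm| w\in(g_2,h_2)\star(g_3,h_3)\bigr]$ rewrites it as $\bigl[g_1\ast_G p\bigm| p\in g_2\ast_G g_3\bigr]\boxtimes\bigl[h_1\ast_H q\bigm| q\in h_2\ast_H h_3\bigr]$, i.e. the $\boxtimes$-product of the two $G$- and $H$-associativity multisets. Symmetrically, $\bigl[w\star(g_3,h_3)\bigm| w\in(g_1,h_1)\star(g_2,h_2)\bigr]$ becomes $\bigl[a\ast_G g_3\bigm| a\in g_1\ast_G g_2\bigr]\boxtimes\bigl[b\ast_H h_3\bigm| b\in h_1\ast_H h_2\bigr]$. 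By associativity of $\ast_G$ the two $G$-factors coincide, by associativity of $\ast_H$ the two $H$-factors coincide, and since $\boxtimes$ is a well-defined function of its two multiset arguments the two $\boxtimes$-products coincide as well. This is the only place where anything must be tracked, and the mild danger — that rearranging an iterated multiset could disturb multiplicities — is precisely what the distributivity lemma neutralises, so I do not anticipate a genuine obstacle.

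For the identity axiom, I would compute $(e_G,e_H)\star(g,h)=(e_G\ast_G g)\boxtimes(e_H\ast_H h)$, which by the identity axioms in $G$ and $H$ equals $[g,\dots,g]\boxtimes[h,\dots,h]=[(g,h),\dots,(g,h)]$ with $mn$ copies, and the same on the other side. For the inverse axiom, from $e_G\in\inv_G(g)\ast_G g$ and $e_H\in\inv_H(h)\ast_H h$ it follows that $(e_G,e_H)$ has positive multiplicity in $(\inv_G(g)\ast_G g)\boxtimes(\inv_H(h)\ast_H h)=\inv(g,h)\star(g,h)$, and likewise $(e_G,e_H)\in(g,h)\star\inv(g,h)$; thus the componentwise map $\inv(g,h)=(\inv_G(g),\inv_H(h))$ serves as an inversion. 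Since all three axioms hold, $G\times H$ with $\star$, $(e_G,e_H)$, and $\inv$ is an $mn$-valued group, which is the assertion of the proposition.
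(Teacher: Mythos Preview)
Your proof is correct and follows essentially the same approach as the paper: both arguments reduce each axiom componentwise, observing that $(g_1,h_1)\star(g_2,h_2)$ is the Cartesian product of the multisets $g_1\ast_G g_2$ and $h_1\ast_H h_2$, so that the associativity multiset on $G\times H$ factors as the Cartesian product of the associativity multisets on $G$ and on $H$. Your version is more explicit than the paper's---you introduce the $\boxtimes$ notation, state and use the distributivity lemma to track multiplicities carefully, and spell out the identity and inverse axioms---whereas the paper handles associativity in a few lines and dismisses the remaining axioms with ``the other axioms are verified similarly''; but the underlying idea is the same.
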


\begin{proof}
We verify associativity; the other axioms are verified similarly. Let’s compare the first components of the multivectors $A = (\alpha_1 \star \alpha_2) \star \alpha_3$ and $B = \alpha_1 \star (\alpha_2 \star \alpha_3)$ for $\alpha_i = (g_i, h_i) \in G \times H$. They represent elements of the multiset $a = (g_1 \ast g_2) \ast g_3 = g_1 \ast (g_2 \ast g_3)$. Similarly, the second components are elements of the multiset $b = (h_1 \ast h_2) \ast h_3 = h_1 \ast (h_2 \ast h_3)$. Each of the multisets $A$ and $B$ is then the Cartesian product of the multisets $a$ and $b$.
\end{proof}

\begin{example}\label{G_n^m}
Consider the Cartesian power $\G_n^m$ of the group $\G_n$. For positive integer numbers $m$ and $n$, we obtain the following $n^m$-valued group multiplication on the set $\Cc^m$ of complex vectors of dimension $m$: in the multiset $(x_1, \dots, x_m) \ast (y_1, \dots, y_m)$, there are $n^m$ vectors of the form
$$ \left( (\sqrt[n]{x_1} + \epsilon^{r_1} \sqrt[n]{y_1})^n, \dots, (\sqrt[n]{x_m} + \epsilon^{r_m} \sqrt[n]{y_m})^n \right),$$ where $r_1, \dots, r_m = 1, \dots, n.$
The pair $(\Cc^m, \ast)$, together with the identity $(0, \dots, 0)$ and the inverse
$$ \inv(x_1, \dots, x_m) = (-1)^n(x_1, \dots, x_m), $$
forms the $n^m$-valued group $\G_n^m$.
\end{example}

\begin{prop}
\begin{enumerate}[(i)]
\item The Cartesian product of two commutative groups is a commutative group.

\item The Cartesian product of two coset groups (see Definition \ref{coset_defi}) is a coset group.
\end{enumerate}
\end{prop}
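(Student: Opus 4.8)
The plan is to deduce (i) directly from Definition~\ref{cartesian} and to prove (ii) by exhibiting an explicit pair that realizes the Cartesian product as a coset construction. For (i): if $G$ and $H$ are commutative $n$- and $m$-valued groups, then by Definition~\ref{cartesian} the multiset $(g_1,h_1)\star(g_2,h_2)$ is the Cartesian product of $g_1\ast g_2$ and $h_1\ast h_2$, while $(g_2,h_2)\star(g_1,h_1)$ is the Cartesian product of $g_2\ast g_1$ and $h_2\ast h_1$. Commutativity of the factors gives $g_1\ast g_2=g_2\ast g_1$ and $h_1\ast h_2=h_2\ast h_1$ as multisets, so the two Cartesian products coincide; this is the whole argument.

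For (ii), let $G=K_1/H_1$ and $H=K_2/H_2$ be coset groups, with $H_i$ a subgroup of $\Aut(K_i)$ of orders $n$ and $m$ respectively and projections $\pi_i\colon K_i\to K_i/H_i$. The componentwise action embeds $\Aut(K_1)\times\Aut(K_2)$ into $\Aut(K_1\times K_2)$, so $H_1\times H_2$ is a subgroup of $\Aut(K_1\times K_2)$ of order $nm$. I claim the coset group $(K_1\times K_2)/(H_1\times H_2)$ of Proposition~\ref{coset_theorem} is exactly $G\times H$ as an $nm$-valued group. First, the orbit of $(u,v)$ under $H_1\times H_2$ is the product of orbits $H_1u\times H_2v$, so the orbit space is canonically identified with $(K_1/H_1)\times(K_2/H_2)$ with projection $\pi=\pi_1\times\pi_2$; under this identification the identity $\pi(e_{K_1},e_{K_2})=(e_G,e_H)$ and the inverse, which is componentwise on both sides, match Definition~\ref{cartesian}.

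It then remains to match the multiplications as multisets. Writing $\mu_0$ for the componentwise multiplication on $K_1\times K_2$ and unwinding formula~(\ref{coset}), for $u_i\in\pi_i^{-1}(x_i)$, $v_i\in\pi_i^{-1}(y_i)$ one gets
$$\mu((x_1,x_2),(y_1,y_2)) = \big[(\pi_1(u_1\cdot h_1(v_1)),\,\pi_2(u_2\cdot h_2(v_2))) \mid (h_1,h_2)\in H_1\times H_2\big],$$
and since $(h_1,h_2)$ ranges over all of $H_1\times H_2$ this multiset is precisely the Cartesian product of $[\pi_1(u_1\cdot h_1(v_1)) \mid h_1\in H_1]=x_1\ast_G y_1$ and $[\pi_2(u_2\cdot h_2(v_2)) \mid h_2\in H_2]=x_2\ast_H y_2$. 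By Definition~\ref{cartesian} this equals $(x_1,x_2)\star(y_1,y_2)$, completing the identification and hence the proof of (ii).

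There is no genuine obstacle here; the one point requiring care is the bookkeeping of multiplicities. One must observe that the index set $H_1\times H_2$ of the coset multiset is in canonical bijection with the product of the index sets $H_1$ and $H_2$ of the two factor multisets, so no collapsing of equal multiset entries is introduced in passing between the two descriptions and the multiset Cartesian product is obtained exactly. I would also note, in line with the remark following Definition~\ref{coset_defi}, that this presentation of $G\times H$ as a coset construction is of course not unique.
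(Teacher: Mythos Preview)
Your proof is correct and follows essentially the same approach as the paper: part (i) is declared obvious there, and for (ii) the paper also exhibits $(K_1\times K_2)/(H_1\times H_2)$ as the realizing coset pair and matches the multiplication with~(\ref{coset}). Your version is more explicit about the identification of orbit spaces and the bookkeeping of multiplicities, which the paper leaves implicit.
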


\begin{proof}
\begin{enumerate}[(i)]
\item Obvious.

\item Consider two coset groups on the sets of orbits $G/H_1$ and $R/H_2$ for 1-valued groups $G$, $R$, and some finite subgroups $H_1$ and $H_2$ in the automorphism groups $\Aut(G)$ and $\Aut(R)$, respectively. By Definition \ref{cartesian} of Cartesian products of groups, we have
\begin{equation}\label{coset_product}
(g_1 H_1, r_1 H_2) \star (g_2 H_1, r_2 H_2) = \left[ \pi_1\left( g_1 \phi(g_2) \right), \pi_2\left( r_1 \psi(r_2) \right) \mid \phi \in H_1, \psi \in H_2 \right].
\end{equation}
On the other hand, the group $H_1 \times H_2 = \{\phi \times \psi \mid \phi \in H_1, \psi \in H_2\}$ naturally acts on the direct product $G \times R$ of the two groups. The resulting orbit space $G/H_1 \times R/H_2$ is equipped with the structure of a coset group with multiplication (\ref{coset_product}).
\end{enumerate}
\end{proof}

From this proposition, we conclude:

\begin{prop}
The group $\G_n^m$ is a commutative coset group.
\end{prop}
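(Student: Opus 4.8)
The plan is to prove the statement by a short induction on $m$, reducing it to the two-factor case already settled in the preceding proposition; the base case $m = 1$ is the assertion that $\G_n$ itself is a commutative coset group.

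First I would recall that the group $\G_n$ of Example~\ref{G_n_defi} is precisely the coset construction of Definition~\ref{coset_defi} attached to the pair $(G, H)$ in which $G = (\Cc, +)$ is the additive group of complex numbers and $H \cong \Z/n$ is the order-$n$ subgroup of $\Aut(\Cc, +)$ generated by multiplication by a primitive $n$-th root of unity $\epsilon$. Indeed, two elements $u_1, u_2 \in \Cc$ lie in the same $H$-orbit iff $u_1^n = u_2^n$, so $u \mapsto u^n$ identifies the orbit space $\Cc/H$ with $\Cc$, and under this identification formula~(\ref{coset}) turns into the multiplication~(\ref{operation2}), with identity $\pi(0) = 0$ and inverse $\inv(x) = (-1)^n x$. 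Commutativity of $\G_n$ is immediate from the symmetry of $x \ast y$ in~(\ref{operation2}) in its two arguments (equivalently, $\G_n$ is a symmetric $n$-algebraic $n$-valued group by Example~\ref{G(p_n)}, hence commutative). So $\G_n$ is a commutative coset group.

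Then I would run the induction. By construction, the group $\G_n^m$ of Example~\ref{G_n^m} is the Cartesian product $\G_n \times \G_n^{m-1}$ in the sense of Definition~\ref{cartesian}; assuming inductively that $\G_n^{m-1}$ is a commutative coset group, the preceding proposition applied to the factors $\G_n$ and $\G_n^{m-1}$ gives at once that $\G_n^m$ is commutative (part~(i)) and is a coset group (part~(ii)). Tracing through the proof of part~(ii), the coset presentation obtained is $\G_n^m = (\Cc^m, +)/(\Z/n)^m$, where $(\Z/n)^m \subset \Aut(\Cc^m, +)$ acts coordinatewise by multiplication by $n$-th roots of unity and the orbit space is identified with $\Cc^m$ via $(u_1, \dots, u_m) \mapsto (u_1^n, \dots, u_m^n)$; substituting into~(\ref{coset}) reproduces exactly the $n^m$-valued operation written out in Example~\ref{G_n^m}, a convenient consistency check.

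The argument is essentially bookkeeping, so I do not anticipate a serious obstacle. The one point requiring care is that the finite subgroup realizing the coset structure of a Cartesian product of two coset groups is genuinely the product group $H_1 \times H_2$, acting coordinatewise, with $|H_1 \times H_2|$ equal to the product of the two valences — this is precisely what the proof of the preceding proposition supplies — so that iterating $m - 1$ times produces $(\Z/n)^m$, of order $n^m$, acting on $(\Cc^m, +)$, in agreement with the $n^m$-valued multiplication of $\G_n^m$.
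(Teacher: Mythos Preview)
Your proof is correct and follows essentially the same approach as the paper: it deduces the result directly from the preceding proposition on Cartesian products of commutative and coset groups, with the base case being the well-known fact that $\G_n$ itself is the coset construction for $(\Cc,+)$ modulo the cyclic group generated by multiplication by $\epsilon$. The paper's own proof is just the one-line remark ``From this proposition, we conclude,'' so your write-up is simply a more explicit version of the same argument.
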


\begin{prop}\label{prop_action}
Let $G$ be a commutative $n$-valued group acting on a set $X$. Then, for each positive integer number $m$, the following left action of the Cartesian power $G^m$ of the group $G$ on the set $X$ is defined:
$$ (g_1, \dots, g_m) \circ x = (g_1\ast \dots\ast g_m) \circ x, $$
where $(g_1\ast \dots \ast g_m) \circ x$ denotes the union of the multisets $g \circ x$ for all $g \in g_1 \ast \dots \ast g_m$. (By the associativity of the operation $\ast$, the multiset $g_1 \ast \dots \ast g_m$ is independent of the order of parentheses.)
\end{prop}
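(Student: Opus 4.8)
The plan is to verify directly that the map $\circ\colon G^m\times X\to\Sym^{n^m}(X)$ given in the statement satisfies the two axioms of Definition \ref{group_action}, using as building blocks the compatibility axiom of the action of $G$ on $X$ together with the associativity and commutativity of $\ast$ on $G$. The first thing to pin down is that the formula makes sense: since $G$ is $n$-valued, associativity shows that the iterated product $g_1\ast\cdots\ast g_m$ is a well-defined $n^{m-1}$-multiset, independent of the bracketing, and each $g\circ x$ is an $n$-multiset, so the union $(g_1,\dots,g_m)\circ x=\bigcup_{g\in g_1\ast\cdots\ast g_m} g\circ x$ is an $n^m$-multiset, which matches the valency of $G^m$ from Definition \ref{cartesian}.

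For the bookkeeping it is convenient to extend $\ast$ and $\circ$ to multisets: for multisets $A,B\subseteq G$ put $A\ast B:=\bigcup_{a\in A,\,b\in B}a\ast b$, and for a multiset $S\subseteq X$ put $A\circ S:=\bigcup_{a\in A,\,s\in S}a\circ s$, all unions taken with multiplicity. Associativity and commutativity of $\ast$ on elements immediately yield the same for this multiset-level product, and the compatibility axiom for the $G$-action on $X$ says, at the element level, that the multisets $[u\circ s\mid s\in v\circ x]$ and $[w\circ x\mid w\in u\ast v]$ coincide, i.e. $u\circ(v\circ x)=(u\ast v)\circ x$; distributing the unions upgrades this to $A\circ(B\circ x)=(A\ast B)\circ x$ for all multisets $A,B\subseteq G$. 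These identities are the only genuine inputs.

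With this language the identity axiom is quick: iterating $e_G\ast e_G=[e_G,\dots,e_G]$ gives that $e_G\ast\cdots\ast e_G$ ($m$ factors) is $e_G$ repeated $n^{m-1}$ times, and $e_G\circ x=[x,\dots,x]$, so $(e_G,\dots,e_G)\circ x$ is $x$ repeated $n^m$ times, as required. For the compatibility axiom, write $\mathbf g=(g_1,\dots,g_m)$, $\mathbf g'=(g_1',\dots,g_m')$, and abbreviate $P=g_1\ast\cdots\ast g_m$, $P'=g_1'\ast\cdots\ast g_m'$. On one side, unwinding definitions gives $[\mathbf g\circ x_i\mid x_i\in\mathbf g'\circ x]=P\circ(P'\circ x)=(P\ast P')\circ x$. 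On the other side, by Definition \ref{cartesian} the product $\mathbf g\star\mathbf g'$ is the Cartesian product of the multisets $g_j\ast g_j'$, so $\bigcup_{\mathbf h\in\mathbf g\star\mathbf g'}(h_1\ast\cdots\ast h_m)=(g_1\ast g_1')\ast\cdots\ast(g_m\ast g_m')$; here commutativity (with associativity) lets us interleave the factors and rewrite this as $(g_1\ast\cdots\ast g_m)\ast(g_1'\ast\cdots\ast g_m')=P\ast P'$. Hence $[\mathbf h\circ x\mid\mathbf h\in\mathbf g\star\mathbf g']=(P\ast P')\circ x$ as well, and the two $n^{2m}$-multisets coincide.

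The main obstacle is organizational rather than conceptual: one must be scrupulous that every ``$\bigcup$'' is an equality of multisets, multiplicities included, so that the Cartesian-product structure of $\mathbf g\star\mathbf g'$ and the iterated unions defining $\circ$ line up exactly. The one substantive point to flag is the use of commutativity of $G$ in collapsing $(g_1\ast g_1')\ast\cdots\ast(g_m\ast g_m')$ to $P\ast P'$, which is exactly the hypothesis of the proposition and fails for non-commutative $n$-valued $G$.
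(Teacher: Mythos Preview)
Your proof is correct and follows essentially the same approach as the paper's: both reduce the compatibility axiom to the identity $(g_1\ast g_1')\ast\cdots\ast(g_m\ast g_m')=(g_1\ast\cdots\ast g_m)\ast(g_1'\ast\cdots\ast g_m')$, obtained from associativity and commutativity of $\ast$, combined with the action axiom $A\circ(B\circ x)=(A\ast B)\circ x$. The paper treats only $m=2$ and declares the general case similar, whereas you handle general $m$ directly, introduce the multiset-level extensions of $\ast$ and $\circ$ explicitly, and also verify the identity axiom; this makes your argument more complete but not genuinely different in strategy.
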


\begin{proof}
Consider the case $m = 2$ (the general case is similar). By the definition of multiplication in the direct product of groups, we have
\begin{equation}\label{eq5}
((g_1, h_1) \star (g_2, h_2)) \circ x = ((g_1 \ast g_2) \times (h_1 \ast h_2)) \circ x.
\end{equation}
By the definition of the action $\circ$,
\begin{equation}
(g_1, h_1) \star ((g_2, h_2) \circ x) = (g_1 \ast h_1 \ast g_2 \ast h_2) \circ x.
\end{equation}
Multiset (\ref{eq5}) is
$$ ((g_1 \ast g_2) \ast (h_1 \ast h_2)) \circ x = (g_1 \ast h_1 \ast g_2 \ast h_2) \circ x, $$
due to the commutativity and associativity of the operation $\ast$.
\end{proof}

From Proposition \ref{prop_action} for $X = G$ and the action of the group $G$ on itself by left shifts, we obtain the following corollary:

\begin{prop}\label{G^n_action_on_G}
Let $G$ be an $n$-valued group. There is a right action of the group $G^n$ on the set $G$, compatible with the operation of the group $G$, that is,
$$ (h_1, \dots, h_n) \circ (g_1 \ast g_2) = ((h_1, \dots, h_n) \circ g_1) \ast g_2. $$
\end{prop}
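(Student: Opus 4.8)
The plan is to obtain the action as the $X=G$ specialization of the recipe in Proposition~\ref{prop_action}, and then to read off the displayed identity from an iterated form of the associativity axiom. First I would note that any $n$-valued group $G$ acts on itself by left translations, $g\circ x:=g\ast x$: the two requirements of Definition~\ref{group_action} for this $\circ$ are exactly the identity and associativity axioms of Definition~\ref{nval_group_def} (so no commutativity is used). Feeding this $X=G$ into the construction of Proposition~\ref{prop_action} gives the action of the Cartesian power $G^n$ on $G$ by
$$(h_1,\dots,h_n)\circ g=(h_1\ast\dots\ast h_n)\ast g,$$
where, for a multiset $A=[a_1,\dots,a_k]$ in $G$ and an element $u\in G$, I abbreviate $A\ast u:=\bigcup_{i}(a_i\ast u)$, and similarly for $u\ast A$ and for $A\ast B$ with $B$ a multiset; also, as usual, an action is applied to a multiset argument by taking the union over its elements.

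The core step is the multiset version of associativity: for every multiset $A$ in $G$ and all $g_1,g_2\in G$,
$$(A\ast g_1)\ast g_2=A\ast(g_1\ast g_2).$$
This I would prove by applying the associativity axiom of $G$ to each $a\in A$ — which gives the equality $(a\ast g_1)\ast g_2=a\ast(g_1\ast g_2)$ of $n^2$-multisets — and then taking the union over $a\in A$; the same device gives $\bigcup_{g'\in g_1\ast g_2}(H\ast g')=H\ast(g_1\ast g_2)$ for any multiset $H$. Writing $H:=h_1\ast\dots\ast h_n$ and combining these two facts,
$$(h_1,\dots,h_n)\circ(g_1\ast g_2)=H\ast(g_1\ast g_2)=(H\ast g_1)\ast g_2=\bigl((h_1,\dots,h_n)\circ g_1\bigr)\ast g_2,$$
which is exactly the assertion; it says precisely that the left action of $G^n$ commutes with the right translations $g\mapsto g\ast g_2$ of $G$ on itself, which is the sense in which it is a \emph{right} action compatible with the operation of $G$.

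I do not expect a genuine obstacle here: the statement is essentially a repackaging of associativity. The one point that needs care is the bookkeeping — lifting the single-element associativity axiom of Definition~\ref{nval_group_def} to the iterated, multiset-valued identity $(A\ast g_1)\ast g_2=A\ast(g_1\ast g_2)$ and checking that this is all that is needed. In particular I would stress that, in contrast with Proposition~\ref{prop_action}, commutativity of $G$ is never invoked here, because the acting elements $h_1,\dots,h_n$ stay grouped on the left and are never permuted among themselves; only associativity of $\ast$ is used.
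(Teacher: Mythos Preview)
Your approach is exactly the paper's: the paper simply states that the result follows from Proposition~\ref{prop_action} with $X=G$ and the action of $G$ on itself by left translations, and leaves the compatibility identity implicit. You spell out that identity via iterated associativity, which is a welcome addition.

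One correction, though: your closing claim that ``commutativity of $G$ is never invoked here'' is not right. You obtain the action of $G^n$ on $G$ by feeding $X=G$ into Proposition~\ref{prop_action}, and that proposition carries commutativity as a hypothesis. This is not an artifact: checking the action axiom
\[
\bigl((h_1,\dots,h_n)\star(h_1',\dots,h_n')\bigr)\circ g \;=\; (h_1,\dots,h_n)\circ\bigl((h_1',\dots,h_n')\circ g\bigr)
\]
unwinds to the multiset equality
\[
(h_1\ast h_1')\ast(h_2\ast h_2')\ast\cdots\ast(h_n\ast h_n') \;=\; (h_1\ast\cdots\ast h_n)\ast(h_1'\ast\cdots\ast h_n'),
\]
and passing from the left side to the right requires permuting each $h_i'$ past the later $h_j$'s, which is exactly commutativity. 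What is true is that the \emph{compatibility identity} $(h_1,\dots,h_n)\circ(g_1\ast g_2)=\bigl((h_1,\dots,h_n)\circ g_1\bigr)\ast g_2$ uses only associativity, as you argue; but the existence of the $G^n$-action itself does not. (The paper's own statement omits the commutativity hypothesis too, but its one-line proof explicitly appeals to Proposition~\ref{prop_action}, so commutativity is implicitly in force.)
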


From Proposition \ref{G^n_action_on_G}, we deduce the following {\it dynamic interpretation} for the polynomials $p_n(z; x_1, \dots, x_m)$:

\begin{theorem}\label{action}
Homogeneous symmetric polynomials $p_n(z; x_1, \dots, x_m)$, defined by equation (\ref{gen_p_n}), describe the $n^m$-valued dynamics of the action of the $n^m$-valued group $\G_n^m$ (see Example \ref{G_n^m}) on the set $\G_n$ (see Definition \ref{G_n_defi}) compatible with the operation of the group $\G_n$ in the sense of Proposition \ref{G^n_action_on_G}.
\end{theorem}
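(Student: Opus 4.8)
The plan is to read Theorem \ref{action} off from Theorem \ref{thm1}, the only real work being to identify the action explicitly and to match the indexing of the polynomials $p_n$. First I would record that $\G_n$ is commutative: by Example \ref{G(p_n)} it is a symmetric $n$-algebraic $n$-valued group, and every such group is commutative. Hence Proposition \ref{prop_action}, applied with $G = X = \G_n$ acting on itself by left translations, together with Proposition \ref{G^n_action_on_G}, gives a well-defined action
\[
\circ : \G_n^m \times \G_n \longrightarrow \Sym^{n^m}(\G_n), \qquad (x_1, \dots, x_m)\circ z = x_1 \ast \dots \ast x_m \ast z ,
\]
of the $n^m$-valued group $\G_n^m$ of Example \ref{G_n^m} on the set $\G_n$; the multiset on the right has $n^m$ elements because $\G_n$ is $n$-valued and there are $m+1$ factors, so for every fixed group element $(x_1, \dots, x_m)$ the map $z \mapsto (x_1, \dots, x_m)\circ z$ is genuinely an $n^m$-valued dynamical system on $\G_n$. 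The compatibility with the operation of $\G_n$, namely $(x_1, \dots, x_m)\circ(z_1 \ast z_2) = \bigl((x_1, \dots, x_m)\circ z_1\bigr) \ast z_2$, is not something extra to be proved here: it is exactly the content of Proposition \ref{G^n_action_on_G}, and it follows at once from the associativity and commutativity of $\ast$.

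The second step is to evaluate the multiset $(x_1, \dots, x_m)\circ z$ and recognise it. Applying formula (\ref{operation2}) repeatedly — and using, as in the proof of Proposition \ref{prop10}, that $\ast$ is independent of the chosen branch of $\sqrt[n]{\cdot}$ — an induction on the number of factors yields
\[
(x_1, \dots, x_m)\circ z = \Bigl[\, \bigl(\sqrt[n]{x_1} + \epsilon^{r_2}\sqrt[n]{x_2} + \dots + \epsilon^{r_m}\sqrt[n]{x_m} + \epsilon^{r_{m+1}}\sqrt[n]{z}\bigr)^{\!n} \ \Bigm|\ r_2, \dots, r_{m+1} = 1, \dots, n \,\Bigr].
\]
Since $-\epsilon^r\sqrt[n]{a}$ runs over all roots of $t^n - (-1)^n a = t^n - \inv(a)$ as $r = 1, \dots, n$, comparison with (\ref{gen_p_n}) shows that this is precisely the multiset of roots, in the main variable, of $p_n\bigl(\,\cdot\,;\inv(x_1), \dots, \inv(x_m), \inv(z)\bigr)$; equivalently, by Theorem \ref{thm1}(ii) it is the multiset of $n$-th powers of the eigenvalues of the Kronecker sum $F(0, \dots, 0, x_1)\boxplus \dots \boxplus F(0, \dots, 0, x_m)\boxplus F(0, \dots, 0, z)$. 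Thus the $n^m$-valued dynamics of the action is literally the family $p_n$ of Theorem \ref{thm1}, which is the assertion of the theorem.

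What I expect to be the (mild) main obstacle is purely a matter of bookkeeping: the multiset produced by the action naturally carries $m+1$ radical summands — the $m$ coordinates of the group element together with the point $z$ being acted on — whereas the polynomial is displayed in the statement as $p_n(z; x_1, \dots, x_m)$ with $m$ parameters, so one must be explicit that $z$ occupies the remaining argument of $p_n$ and, using the homogeneity and full symmetry of $p_n$ from Theorem \ref{thm1}(i),(iii), that the inverse twists $\inv(a) = (-1)^n a$ on the arguments are absorbed by the sign changes $\sqrt[n]{a} \mapsto -\epsilon^r\sqrt[n]{a}$ in the root formula. Once the conventions are lined up, no further idea is required, since the commutativity of $\G_n$ (needed for Proposition \ref{prop_action}) and the compatibility clause (supplied by Proposition \ref{G^n_action_on_G}) are already in place.
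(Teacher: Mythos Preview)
Your proposal is correct and follows exactly the route the paper takes: the paper gives no separate proof for Theorem \ref{action}, merely announcing it as a direct consequence of Proposition \ref{G^n_action_on_G} (together with Proposition \ref{prop_action} and the explicit formula (\ref{gen_p_n})), and your write-up is precisely the unpacking of that deduction. Your explicit flagging of the bookkeeping issue --- that the orbit $(x_1,\dots,x_m)\circ z$ involves $m+1$ radical summands and hence the relevant polynomial is $p_n$ with the acted-upon point occupying one of the argument slots --- is in fact more careful than the paper's own statement.
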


\newpage

\bibliographystyle{mystyle}
\bibliography{data}

\begin{thebibliography}{BVEP96}
\providecommand{\url}[1]{\texttt{#1}}
\providecommand{\urlprefix}{URL }
\providecommand{\eprint}[2][]{\url{#2}}

\bibitem[AN95]{Albu}
Albu T., Nicolae F.
\newblock {$G$-cogalois field extensions and primitive elements}.
\newblock Conference A. Mathematics and Theoretical Physics.
\newblock \eprint{https://doi.org/10.1515/9783110886726.233}

\bibitem[Bab86]{Babenko}
Babenko K.I.
\newblock {Toeplitz and Hankel matrices}.
\newblock Russian Math. Surveys, 41(1) (1986) 209--218.
\newblock \eprint{https://doi.org/10.1070/RM1986v041n01ABEH003212}

\bibitem[BB12]{Borovik}
Behravesh H., Borovik A.
\newblock {A note on multivalued groups}.
\newblock Ricerche di Matematica, 61(2) (2012) 245--253.
\newblock \eprint{https://doi.org/10.1007/s11587-012-0127-3}

\bibitem[BGR24]{Kontsevich_type_polynomials}
Buchstaber V., Gaiur I., Rubtsov V.
\newblock {Algebraic 2-valued group structures on $\mathbb{P}^1$,
  Kontsevich-type polynomials, and multiplication formulas, I}.
\newblock arXiv.
\newblock \eprint{https://arxiv.org/abs/2412.07330}

\bibitem[BN71]{Buchstaber_Novikov}
Buh{\v s}taber V.M., Novikov S.P.
\newblock {Formal groups, power systems and Adams operators}.
\newblock Math. USSR-Sb., 13(1) (1971) 80--116.
\newblock \eprint{https://doi.org/10.1070/SM1971v013n01ABEH001030}

\bibitem[BR97]{BuchRees}
Buchstaber V.M., Rees E.G.
\newblock {Multivalued groups, their representations and Hopf algebras}.
\newblock Transform. Groups, 2(4) (1997) 325--349.
\newblock \eprint{https://doi.org/10.1007/BF01234539}

\bibitem[Buc75]{Buchstaber75}
Buchstaber V.M.
\newblock {Two-valued formal groups. Algebraic theory and applications to
  cobordism. I}.
\newblock Mathematics of the USSR-Izvestiya, 9(5) (1975) 987--1006.
\newblock \eprint{https://doi.org/10.1070/IM1975v009n05ABEH001514}

\bibitem[Buc90]{Buchstaber90}
Buchstaber V.M.
\newblock {Functional equations associated with addition theorems for elliptic
  functions and two-valued algebraic groups}.
\newblock Russian Math. Surveys, 45(3) (1990) 213--215.
\newblock \eprint{https://doi.org/10.1070/RM1990v045n03ABEH002361}

\bibitem[Buc06]{Buchstaber}
Buchstaber V.M.
\newblock {$n$-valued groups: theory and applications}.
\newblock Mosc. Math. J., 6(1) (2006) 57--84.
\newblock \eprint{https://doi.org/10.17323/1609-4514-2006-6-1-57-84}

\bibitem[Bur55]{Burnside55}
Burnside W.
\newblock \emph{{Theory of Groups of Finite Order}}.
\newblock Dover Publications, Inc., New York, 2nd edition (1955).
\newblock Reprint of the 1911 edition

\bibitem[BV96]{Buchstaber_Veselov}
Buchstaber V.M., Veselov A.P.
\newblock {Integrable correspondences and algebraic representations of
  multivalued groups}.
\newblock International Mathematics Research Notices, 1996(8) (1996) 381--400.
\newblock \eprint{https://doi.org/10.1155/S1073792896000256}

\bibitem[BV19]{Buchstaber_Veselov19}
Buchstaber V.M., Veselov A.P.
\newblock Conway topograph, $\mathrm{PGL}_2(\mathbb{Z})$-dynamics and
  two-valued groups.
\newblock Russian Math. Surveys, 74(3) (2019) 387--430

\bibitem[BV24]{BuchVesnin}
Buchstaber V.M., Vesnin A.Y.
\newblock {$n$-valued groups, branched coverings and hyperbolic 3-manifolds}.
\newblock Sbornik: Mathematics, 215(11) (2024) 1441--1467.
\newblock \eprint{https://doi.org/10.4213/sm10089e}

\bibitem[BVEP96]{Vershik}
Buchstaber V.M., Vershik A.M., Evdokimov S.A., Ponomarenko I.N.
\newblock {Combinatorial algebras and multivalued involutive groups}.
\newblock Funct. Anal. Appl., 30(3) (1996) 158--162.
\newblock \eprint{https://doi.org/10.1007/BF02509502}

\bibitem[BVG22]{BuchVesGaif}
Buchstaber V.M., Veselov A.P., Gaifullin A.A.
\newblock {Classification of involutive commutative two-valued groups}.
\newblock Russian Mathematical Surveys, 77(4) (2022) 651--727.
\newblock \eprint{https://doi.org/10.4213/rm10064e}

\bibitem[Car22]{Carrillo}
Carrillo S.A.
\newblock {The composition of polynomials is a determinant}.
\newblock The American Mathematical Monthly, 129(2) (2022) 151--165.
\newblock \eprint{https://doi.org/10.1080/00029890.2022.2005390}

\bibitem[Chi24]{Chirkov}
Chirkov M.A.
\newblock {On growth function of $n$-valued dynamics}.
\newblock Math. Notes, 115(3) (2024) 420--426.
\newblock \eprint{https://doi.org/10.4213/mzm14143}

\bibitem[CLO05]{Cox}
Cox D.A., Little J.B., O'Shea S.
\newblock \emph{{Using Algebraic Geometry}}.
\newblock Graduate Texts in Mathematics. Springer New York, NY (2005).
\newblock \eprint{https://doi.org/10.1007/b138611}

\bibitem[Gan00]{Gantmacher}
Gantmacher F.
\newblock \emph{{Theory of Matrices}}, volume~2.
\newblock Chelsea Publishing, New York (2000).
\newblock
  \eprint{https://archive.org/details/gantmacher-the-theory-of-matrices-vol-2-1959}

\bibitem[Gas54]{Gaschtz54}
Gasch{\"u}tz W.
\newblock {Endliche Gruppen mit treuen absolut-irreduziblen Darstellungen}.
\newblock Mathematische Nachrichten, 12 (1954) 253--255.
\newblock \eprint{https://doi.org/10.1002/MANA.19540120308}

\bibitem[GKZ94]{Gelfand}
Gelfand I.M., Kapranov M.M., Zelevinsky A.V.
\newblock \emph{{Discriminants, Resultants, and Multidimensional
  Determinants}}.
\newblock Modern Birkh{\"a}user Classics. Birkh{\"a}user Boston, MA (1994).
\newblock \eprint{https://doi.org/10.1007/978-0-8176-4771-1}

\bibitem[GRS24]{Gaiur}
Gaiur I., Rubtsov V., van Straten D.
\newblock {Product formulas for the Higher Bessel functions}.
\newblock arXiv.
\newblock \eprint{https://arxiv.org/abs/2405.03015}

\bibitem[GY07]{Yagodovskii}
Gaifullin A.A., Yagodovskii P.V.
\newblock {Integrability of $m$-valued dynamics by means of single-generated
  $m$-valued groups}.
\newblock Russian Math. Surveys, 62(1) (2007) 181--183.
\newblock \eprint{https://doi.org/10.1070/RM2007v062n01ABEH004388}

\bibitem[Hel97]{Helou}
Helou C.
\newblock {On Wendt's determinant}.
\newblock Mathematics of Computation, 66(219) (1997) 1341--1346.
\newblock \eprint{https://www.jstor.org/stable/2153774}

\bibitem[HJ91]{Horn}
Horn R.A., Johnson C.R.
\newblock \emph{{Topics in Matrix Analysis}}.
\newblock Cambridge University Press (1991).
\newblock \eprint{https://doi.org/10.1017/CBO9780511840371}

\bibitem[Kho82]{Kholodov81}
Kholodov A.N.
\newblock {Algebraic theory of multi-valued formal groups}.
\newblock Math. USSR-Sb, 42(2) (1982) 265--285.
\newblock \eprint{https://doi.org/10.1070/SM1982v042n02ABEH002253}

\bibitem[Kho84]{Kholodov84}
Kholodov A.N.
\newblock {Classification of six-valued, seven-valued and eight-valued
  elementary formal groups}.
\newblock Vestnik Moskov. Univ. Ser. 1. Mat. Mekh, $ $(1) (1984) 30--33.
\newblock
  \eprint{https://www.mathnet.ru/php/archive.phtml?wshow=paper&jrnid=vmumm&paperid=3310&option_lang=eng}

\bibitem[Kor24]{Kornev}
Kornev M.I.
\newblock {$n$-valued coset groups and dynamics}.
\newblock Mathematical Notes, 116(1) (2024) 66--76.
\newblock \eprint{https://doi.org/10.1134/S000143462407006X}

\bibitem[Kri81]{Krichever81}
Krichever I.M.
\newblock {Baxter's equations and algebraic geometry}.
\newblock Funct. Anal. Appl., 15(2) (1981) 92--103.
\newblock \eprint{https://doi.org/10.1007/BF01082280}

\bibitem[Lan02]{Lang}
Lang S.
\newblock \emph{{Algebra}}.
\newblock Springer, New York, NY (2002).
\newblock \eprint{https://doi.org/10.1007/978-1-4613-0041-0}

\bibitem[Lee]{MS_binomial}
Leeuwen M.
\newblock {Math Stack Exchange}.
\newblock
  \eprint{https://math.stackexchange.com/questions/469559/proving-sum-k-0nkn-choose-k2-n2n-1-choose-n-1}

\bibitem[Lit06]{Littlewood06}
Littlewood D.E.
\newblock \emph{{The theory of group characters and matrix representations of
  groups}}.
\newblock AMS Chelsea Publishing, Providence, RI, 2nd edition (2006).
\newblock \eprint{https://www.ams.org/books/chel/357/}

\bibitem[LS86]{Liebeck}
Liebeck M.W., Saxl J.
\newblock {The finite primitive permutation groups of rank three}.
\newblock Bulletin of the London Mathematical Society, 18(2) (1986) 165--172.
\newblock \eprint{https://doi.org/10.1112/blms/18.2.165}

\bibitem[LT85]{Lancaster}
Lancaster P., Tismenetsky M.
\newblock \emph{{The Theory of Matrices: With Applications}}.
\newblock Computer Science and Scientific Computing. Elsevier Science (1985).
\newblock
  \eprint{https://books.google.ch/books/about/The_Theory_of_Matrices.html?id=2c011Aptsa8C&redir_esc=y}

\bibitem[Mel17]{Melman}
Melman A.
\newblock {Polynomial eigenvalue bounds from companion forms}.
\newblock arXiv.
\newblock \eprint{https://arxiv.org/abs/1701.03994}

\bibitem[MMS25]{Mednykh}
Mednykh A.D., Mednykh I.A., Sokolova G.K.
\newblock {Companion matrix for superposition of polynomials and its
  application to knot theory}.
\newblock Dokl. RAN. Math. Inf. Proc. Upr., 521 (2025) 72--80.
\newblock \eprint{https://doi.org/10.31857/S2686954325010096}

\bibitem[Pon24]{Ponomarenko24}
Ponomarenko I.N.
\newblock {On a family of multivalued groups}.
\newblock Proc. Steklov Inst. Math., 326 (2024) 286--288.
\newblock Collected papers. Dedicated to Victor Matveevich Buchstaber on the
  occasion of his 80th birthday, \eprint{https://doi.org/10.4213/tm4434}

\bibitem[Pos25]{Posadskiy}
Posadskiy K.M.
\newblock {Continuous two-valued discrete-time dynamical systems and actions of
  two-valued groups}.
\newblock arXiv.
\newblock \eprint{https://arxiv.org/abs/2504.02635}

\bibitem[Pra04]{Prasolov_polynomials}
Prasolov V.V.
\newblock \emph{{Polynomials}}.
\newblock Algorithms and Computation in Mathematics. Springer Berlin,
  Heidelberg, 1st edition (2004).
\newblock \eprint{https://doi.org/10.1007/978-3-642-03980-5}

\bibitem[Rib99]{Fermat_for_amateurs}
Ribenboim P.
\newblock \emph{{Fermat's Last Theorem for Amateurs}}.
\newblock Springer New York, NY, 1st edition (1999).
\newblock \eprint{https://doi.org/10.1007/b97437}

\bibitem[St{\'e}00]{Stephanos}
St{\'e}phanos C.
\newblock {Sur une extension du calcul des substitutions lin{\'e}aires}.
\newblock Journal de Math{\'e}matiques Pures et Appliqu{\'e}es, 6(5) (1900)
  73--128.
\newblock \eprint{https://www.numdam.org/item/JMPA_1900_5_6__73_0/}

\bibitem[Urs74]{Ursell}
Ursell H.D.
\newblock {The degrees of radical extensions}.
\newblock Canadian Mathematical Bulletin, 17(4) (1974) 615--617.
\newblock \eprint{https://doi.org/10.4153/CMB-1974-114-x}

\bibitem[Ves91]{Veselov91}
Veselov A.P.
\newblock {Integrable maps}.
\newblock Russian Math. Surveys, 46(5) (1991) 1--51.
\newblock \eprint{https://doi.org/10.1070/RM1991v046n05ABEH002856}

\bibitem[Wei]{MathWorld_Wolstenholme}
Weisstein E.W.
\newblock {Wolstenholme Prime}.
\newblock MathWorld---A Wolfram Web Resource.
\newblock \eprint{https://mathworld.wolfram.com/WolstenholmePrime.html}

\bibitem[Wen94]{Wendt}
Wendt E.
\newblock {Arithmetische Studien {\"u}ber den 'letzten' Fermatschen Satz,
  welcher aussagt, dass die Gleichung $a^n = b^n + c^n$ f{\"u}r $n >2$; in
  ganzen Zahlen nicht aufl{\"o}sbar ist}.
\newblock Journal f{\"u}r die reine und angewandte Mathematik, 113(335-347).
\newblock \eprint{https://doi.org/10.18452/175}

\bibitem[Wie64]{Wielandt}
Wielandt H.
\newblock \emph{Finite permutation groups}.
\newblock Academic Press, New York and London (1964).
\newblock \eprint{https://doi.org/10.1016/C2013-0-11702-3}

\bibitem[Wol62]{Wolstenholme62}
Wolstenholme J.
\newblock {On certain properties of prime numbers}.
\newblock The Quarterly Journal of Pure and Applied Mathematics, 5 (1862)
  35--39

\end{thebibliography}

\vspace{2em}
\noindent
\textit{Victor Buchstaber}\\
Steklov Mathematical Institute of Russian Academy of Sciences\\
Email: \texttt{buchstab@mi-ras.ru}

\vspace{2em}
\noindent
\textit{Mikhail Kornev}\\
Steklov Mathematical Institute of Russian Academy of Sciences\\
Email: \texttt{mkorneff@mi-ras.ru}

\end{document}